\DeclarePairedDelimiter\floor{\lfloor}{\rfloor}
\newtheorem{theorem}{Theorem}[section]
 \newtheorem{corollary}[theorem]{Corollary}
 \newtheorem{lemma}[theorem]{Lemma}
 \newtheorem{proposition}[theorem]{Proposition}
 \newtheorem{remark}[theorem]{Remark}
 \numberwithin{equation}{section}
 \newtheorem*{theorem*}{Theorem}
 \DeclarePairedDelimiterX{\ip}[2]{\langle}{\rangle}{#1, #2}
 \newcommand{\les}{\lesssim}
 \newcommand{\norm}[1]{\left\|#1\right\|}
\newcommand{\abs}[1]{\left|#1\right|}
\newcommand{\defeq}{\mathrel{\mathop:}=}
\newcommand{\eps}{\varepsilon}
\newcommand{\R}{\mathbb R}
\newcommand{\T}{\mathbb T}
\newcommand{\jm}{\mathcal J}
\definecolor{violet}{rgb}{0.54, 0.17, 0.89}
\newcommand{\Z}{\mathbb{Z}}
\let\ams@starttoc\@starttoc
\let\@starttoc\ams@starttoc
\patchcmd{\@starttoc}{\makeatletter}{\makeatletter\parskip\z@}{}{}
\title[Onset of wave turbulence for NLS]{Onset of the wave turbulence description of the longtime behavior of the nonlinear Schr\"odinger equation}
\author{T. Buckmaster, P. Germain, Z. Hani, J. Shatah}
\begin{document}


\begin{abstract} Consider the cubic nonlinear Schr\"odinger equation set on a $d$-dimensional torus, with data whose Fourier coefficients have  phases which are uniformly distributed and independent. We show that, on average, the evolution of the moduli of the Fourier coefficients is governed by the so-called \emph{wave kinetic equation}, predicted in wave turbulence theory, on a nontrivial timescale.
\end{abstract}
\maketitle

{\centering \emph{We dedicate this manuscript to the memory of Jean Bourgain (1954 -- 2018).}\par}

\setcounter{tocdepth}{1}
\tableofcontents

\section{Introduction}

\subsection{The Kinetic Equation}
The central theme in the theory of non-equilibrium statistical physics of interacting particles is the derivation of a kinetic equation that describes the distribution of particles in phase space. The main example here is Boltzmann's kinetic theory: rather than looking at the individual trajectories of $N$-point particles following $N-$body Newtonian dynamics, Boltzmann derived a kinetic equation that described the effective dynamics of the distribution function in a certain large-particle limit (so-called the Boltzmann-Grad limit).

A parallel kinetic theory for waves, being as fundamental as particles, was proposed by physicists in the past century. Much like the Boltzmann theory, the aim is to understand the effective behavior and energy-dynamics of systems where many waves interact nonlinearly according to time-reversible dispersive or wave equations. The theory predicts that the macroscopic behavior of such nonlinear wave systems is described by a \emph{wave kinetic equation} that gives the average distribution of energy among the available wave numbers (frequencies). Of course, the shape of this kinetic equation depends directly on the particular dispersive system/PDE that describes the reversible microscopic dynamics. 

The aim of this work is to start the rigorous investigation of such passage from a reversible nonlinear dispersive PDE to an irreversible kinetic equation that describes its effective dynamics. For this, 
we consider the cubic nonlinear Schr\"odinger equations on a generic torus of size $L$ (with periodic boundary conditions) and with a parameter $\lambda>0$ quantifying the importance of nonlinear effects (or equivalently via scaling, the size of the initial datum):
\[
 \tag{NLS} \label{NLS}
\begin{cases}
i \partial_t u - \Delta_\beta u = - \lambda^{2} |u|^{2} u,  \quad x\in \mathbb{T}_L^d = [0,L]^d,   \\[.6em]
u(0,x) = u_0(x).
\end{cases}
\]
The spatial dimension is $d \geq 3$. Here, and throughout the paper, we denote 
\[
\Delta_\beta \defeq  \frac{1}{2\pi}  \sum\limits_{i=1}^d \beta_i \partial_i^2,
\]
where $\beta\defeq  (\beta_1,\dots,\beta_d)\in [1,2]^d$, and we denote $\mathbb{Z}_L^d \defeq \frac{1}{L} \mathbb{Z}^d$, the dual space to $\mathbb{T}_L^d$.  

Typically in this theory, the initial data are randomly distributed in an appropriate fashion. For us, we consider random initial data of the form
\begin{equation}\label{random data}
u_0(x) = \frac{1}{L^{d}} \sum\limits_{k \in \mathbb{Z}_L^d} \sqrt{\phi(k)}  e^{2\pi i [k \cdot x + \vartheta_k(\omega)]},
\end{equation}
for some nice (smooth and localized) deterministic function $\phi:\R^d \to [0, \infty)$. 
The phases $\vartheta_k(\omega)$  are  independent random variables,  uniformly distributed  on $[0,1]$. 
Notice that the normalization of the Fourier transform is chosen so that
\[
 \| u_0 \|_{L^2} \sim 1.
\]

Filtering by the linear group and expanding in Fourier series, we write
\begin{equation}
\label{uFourierseries}
u(t,x) = \frac{1}{L^{d}} \sum\limits_{k \in \mathbb{Z}_L^d} a_k(t)  e^{2\pi i [k \cdot x + t Q(k)]}, \quad \mbox{where} \quad Q(k) \defeq\sum\limits_{i=1}^d \beta_i (k_i)^2.
\end{equation}
The main conjecture of wave turbulence theory is that as $L \to \infty$ (large box limit) and $\frac{\lambda^2}{L^d} \to 0$ (weakly nonlinear limit), the quantity
$$
\rho^L_k(t) = \mathbb{E} |a_k(t)|^2
$$
converges to a solution of a kinetic equation. More precisely,  it is conjectured that, as $L\to \infty$, $t\to \infty$ and $\frac{\lambda^2}{L^d} \to 0$, then $\rho^L_k(t) \sim \rho(t,k)$, where $\rho: \mathbb{R}\times\mathbb{R}^d\to \mathbb{R}_+$ satisfies the  wave kinetic equation
\begin{equation*}
\label{KW} \tag{WKE}
\begin{cases}
\partial_t \rho = \frac{1}{\tau} \mathcal{T}(\rho)=  \frac{1}{\tau} 
 \int\limits_{(\mathbb{R}^{d})^{3}} \delta(\varSigma) \delta(\varOmega) \prod_{i=0}^3 \rho(k_i) \left[ \sum_{i=0}^3 \frac{(-1)^i}{\rho(k_i)}  \right] \prod_{i=0}^3 dk_i,\\
 \rho(0,k) = \phi(k).
\end{cases}
\end{equation*}
where $\tau \sim \left(\frac{L^{d}}{\lambda^2}\right)^{2}$, we introduced the convention $k_0=k$ and the notation
\[
\begin{cases}
\varSigma = \varSigma(k,k_1,\dots,k_3) = \sum_{i=0}^3 (-1)^i k_i  \\[.3em]
\varOmega= \varOmega(k,k_1,\dots,k_3) = \sum_{i=0}^3 (-1)^i Q(k_i),
\end{cases}
\]
and finally $\delta(\Sigma) \delta(\Omega)$ is to be understood in the sense of distributions: $\delta(\Sigma)$ is just the convolution integral over $k_1-k_2+k_3=k$, whereas $\delta(\Omega=0):=\lim_{\epsilon\to 0}\int \varphi(\frac{\Omega}{\epsilon}) dk_1 dk_2 dk_3$ for some $\varphi \in C_c^\infty(\R)$ with $\int \varphi=1$. Note that this is absolutely continuous to the surface measure through the formula $\delta(\Omega) = \frac{1}{|\nabla \Omega|} d\mu_\Omega$, with $d \mu_\Omega$ being the surface measures on $\{ \Omega = 0\}$.


\subsection{Background}

In the physics literature, the  wave kinetic equation~\eqref{KW} was first
derived by Peierls~\cite{Peierls} in his investigations of solid state
physics; it was discovered again by
Hasselmann~\cite{Hasselmann1,Hasselmann2} in his work on the energy
spectrum of water waves. The subject was revived and
systematically investigated by Zakharov and his collaborators~\cite{ZLF}, particularly after the discovery of special power-type stationary solutions for the kinetic equation that serve as analogs of the Kolmogorov spectra of hydrodynamic turbulence. These so-called \emph{Kolmogorov-Zakharov spectra} predict steady states of the corresponding microscopic system (possibly with forcing and dissipation at well-separated extreme scales), where the energy \emph{cascades} at a constant flux through the (intermediate) frequency scales. Nowadays, wave turbulence is a vibrant area of research in nonlinear wave theory with important practical applications in several areas including oceanography and plasma physics, to mention a few. We refer to~\cite{Nazarenko,NR} for recent reviews.

The analysis of~\eqref{KW} is full of very interesting questions,
see~\cite{EV,GIT,ST} for recent developments, but we will focus here on
the problem of its rigorous derivation. Several partial or heuristic derivations have
been put forward for~\eqref{KW}, or the closely related quantum Boltzmann
equations~\cite{BCEP1,BCEP2,BCEP3,ESY0,DSTz,Faou,LS1,MMT,Spohn2}. However, to
the best of our knowledge, there is no rigorous mathematical statement on
the derivation of~\eqref{KW} from random data. The closest attempt in this
direction is due to Lukkarinen and Spohn~\cite{LS2}, who studied the large
box limit for the discrete nonlinear Schr\"odinger equation at statistical
equilibrium (corresponding to a stationary solution to \eqref{KW}).

In preparation for such a study, one can first try to understand the large box and weakly nonlinear limit
of~\eqref{NLS} without assuming any randomness in the data. In the case
where~\eqref{NLS} is set on a rational torus, it is possible to extract a
governing equation by retaining only exact
resonances~\cite{FGH,GHT1,GHT2,BGHS}. The limiting equation is
then Hamiltonian and dictates the behavior of the microscopic system (NLS on $\T^d_L$) on the timescales $L^2/\lambda^2$ (up to a log loss for $d=2$) and for sufficiently small $\lambda$. It is worth mentioning that such a result is not possible if the equation
is set on generic tori, since most of the exact resonances are destroyed there.

Finally, we point out that there are very few instances where the derivation of kinetic equations has been done rigorously. The fundamental result of
Lanford~\cite{Lanford}, later clarified in~\cite{GST}, deals with the
$N$-body Newtonian dynamics, from which emerges, in the Grad limit, the
Boltzmann equation. This can be understood as a classical analog of the rigorous derivation on \eqref{KW}. Another instance of such success was the case of random
linear Schr\"odinger operators (Anderson's model)~\cite{Erdos,ESY1,ESY2,Spohn1}. This can be understood as a linear analog of the problem of rigorously deriving \eqref{KW}.

\subsection{The difficulties of the problem}

There are several difficulties in proving the validity of \eqref{KW} which we now enumerate:

\begin{enumerate}[(a), leftmargin =*]

\item  \label{rpa} The textbook derivation of the wave kinetic equation is done under the assumption that the independence of the data propagates for all time.  This assumption cannot be verified for any nonlinear model.  A way around this difficulty is to Taylor expand the profile $a_k$ in terms of the initial data. Such an expansion can be represented by  Feynman trees, and  permits us to utilize  the statistical independence of the data  in  computing the expected value of $|a_k|^2$.  Moreover
 one needs to control the errors in such an expansion to derive the kinetic equation \eqref{KW}.  These calculations are presented in Sections \ref{feynman} and \ref{LWP section}.

\item The wave kinetic equation induces an $O(1)$ change on its initial configuration at a timescale of $\tau$.  Thus we need to establish that  for solutions of  \eqref{NLS},  the expansion mentioned above converge up to time $\tau$. This requires a local existence result  on a timescale which is several orders of magnitude longer than what is known.  This shortcoming is a main reason why our argument cannot reach the kinetic timescale $\tau$, and we have to contend with a derivation over timescales where the kinetic equation only affects a relatively small change on the initial distribution, and as such coincides (up to negligible errors) with its first time-iterate.     

Therefore, a pressing issue is to increase the length of the time interval $[0,T]$, 
over which the Taylor expansion gives a good representation of solutions to the  nonlinear problem. For deterministic data, the best known results that give effective bounds in terms of $L$ come from our previous work \cite{BGHS} which gives a description of the solution up to times $\sim L^2/\lambda^2$ (up to a $\log L$ loss for $d=2$) and for $\lambda \ll 1$. Such timescale would be very short for our purposes.

To increase $T$, we have to rely on the randomness of the initial data. Roughly speaking, for a random field that is normalized to 1 in $L^2(\T^d_L)$, its $L^\infty$ norm can be heuristically bounded on average by $L^{-d/2}$. Therefore, regarding the nonlinearity $\lambda^{2} |u|^{2}u$ as a nonlinear potential $Vu$ with $V=\lambda^{2}|u|^{2}$ and $\|V\|_{L^\infty}\lesssim \lambda^{2}L^{-d}$, one would hope that this should get a convergent expansion on an interval $[0,T]$ provided that $T \lambda^{2}L^{-d} \ll 1$, which amounts to $T\leq \sqrt{\tau}$. This is the target in this manuscript.

The heuristic presented above can be implemented by relying on Khinchine-type improvements to the Strichartz norms of a linear solution $e^{it\Delta_\beta} u_{0}$ with random initial data $u_{0}$. Similar improvements have been used to lower the regularity threshold for well-posedness of nonlinear dispersive PDE. Here, the aim is to prolong  the  existence time and improve the Taylor approximation.
The randomness gives  us better control on  the size of the linear solution  over the interval $[0,T]$, while  an improved \emph{deterministic} Strichartz estimate for $\|e^{it\Delta_\beta}\psi\|_{L^p([0,T]\times \T^d)}$ with $\psi \in L^2(\T^d)$, allows us to maintain the random improvement for the nonlinear problem. The genericity of the $(\beta_i)$ is crucial (as was first observed in \cite{DGG}), and allows us to go beyond the  limiting $T^{1/p}$ growth that occurs on the rational torus. Unfortunately, the available estimates here (including those in \cite{DGG}) are not optimal for some ranges of the parameters $\lambda$ and $L$, which is why, in $d=3$, our result in Theorem \ref{introtheo} below falls  short of the timescale $\sqrt \tau\sim L^3/\lambda$. 



\item  \label{quasi-res}   To derive the kinetic equation in the large box limit, using the expansion for $\rho^L_k(t) = \mathbb{E} |a_k(t)|^2$,   one has to prove equidistribution theorems for the quasi-resonances over a very fine scale, i.e.,  $T^{-1}$. Since $T$ could be $\gg L^2$, such scales are much finer than the any equidistribution scale on the rational torus. Again, here the genericity of the $(\beta_i)$ is crucial. For this we use and extend a recent result of Bourgain on pair correlation for irrational quadratic forms \cite{Bourgain}.

\end{enumerate}


\subsection{The main result} Precise statements of our results in arbitrary dimensions $d\geq 3$ will be given in Section \ref{Precise results}. Those statements depend on several parameters coming from equidistribution of lattice points and Strichartz estimates.  For the purposes of this introductory section, we present a less general theorem without the explicit appearance of these parameters. 

\begin{theorem}\label{introtheo} Consider the cubic \eqref{NLS} on the three-dimensional torus $\T^3_L$. Assume that the initial data are chosen randomly as in \eqref{random data}. There exists $\delta>0$ such that the following holds for $L$ sufficiently large and $L^{-A}\leq \lambda \leq L^B$ (for positive $A$ and $B$):
\begin{equation}\label{theo1eq}
\mathbb{E} |a_k(t)|^2 = \phi(k)+\frac{t}{\tau} \mathcal{T}(\phi)(k) +  O_{\ell^\infty } \left( L^{-\delta} \frac{t}{\tau} \right), \quad L^{\delta} \leq t \leq T,
\end{equation}
where $\tau=\frac{1}{2}\left(\frac{L^{3}}{\lambda^2}\right)^{2}$ and $T\sim \frac{L^{3-\gamma}}{\lambda^2}$, for some $0<\gamma<1$ stated explicitly in Theorem \ref{cubic theorem}.
\end{theorem}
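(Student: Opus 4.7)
The plan is to treat the Fourier profiles $a_k(t)$ via a Duhamel/Feynman tree expansion
\[
a_k(t) = \sum_{n \geq 0} J_n(t,k) + R_N(t,k),
\]
where each $J_n$ is an explicit sum indexed by ternary trees of depth $n$, whose leaves carry the initial Gaussian-like data $\sqrt{\phi(k_j)}\,e^{2\pi i\vartheta_{k_j}}$, whose internal vertices implement the Duhamel integration against the cubic nonlinearity, and whose phases are governed by the quadratic form $Q$. The quantity of interest factors as
\[
\mathbb{E}|a_k(t)|^2 \;=\; \sum_{n,m} \mathbb{E}\bigl[J_n(t,k)\overline{J_m(t,k)}\bigr] \;+\; (\text{remainder cross terms}).
\]
The point of this representation, developed in Section \ref{feynman}, is that each factor $e^{2\pi i \vartheta_{k_j}}$ is independent, so expectations at finite order reduce to sums over index pairings. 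I would aim to prove the theorem in three blocks: (i) establish that the expansion converges on the long interval $[0,T]$ with $T \sim L^{3-\gamma}/\lambda^2$; (ii) identify the leading pairing diagrams and match them to $\phi(k) + \frac{t}{\tau}\mathcal T(\phi)(k)$; (iii) show that all non-leading diagrams, as well as the remainder $R_N$, are bounded in $\ell^\infty_k$ by $L^{-\delta}\cdot t/\tau$.

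For (i), the deterministic energy method only reaches times of order $L^2/\lambda^2$, so I would rely on the randomness of $u_0$ exactly as sketched in the paper's heuristic: the linear evolution $e^{it\Delta_\beta}u_0$ enjoys Khinchine-type random Strichartz estimates with effective size $L^{-3/2}$ rather than $1$, and the quintic Duhamel iterate can then be closed in probabilistic Strichartz spaces provided $T \cdot \lambda^2 L^{-d} \ll 1$, i.e., $T \ll \sqrt\tau$. To maintain the random gain under iteration on the nonlinear evolution, I would feed the probabilistic linear bound into an \emph{improved deterministic} Strichartz estimate on the generic torus $\mathbb T^d_L$ that uses the Diophantine structure of $\beta \in [1,2]^d$ (following and extending the techniques of \cite{DGG}) to circumvent the rational-torus barrier $\|e^{it\Delta}\psi\|_{L^p([0,T]\times \mathbb T^d)} \lesssim T^{1/p}\|\psi\|_{L^2}$. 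This is carried out in Section \ref{LWP section} and gives the local existence and expansion validity on $[0,T]$.

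For (ii), I would compute $\mathbb E[J_n\overline{J_m}]$ order by order: by independence of the phases, only "pairing" contractions of the leaves survive, and one can organize them graphically. The trivial contraction reproduces $\phi(k)$. The next nontrivial pairing is the \emph{principal collision tree}, a sum of the form
\[
\frac{1}{L^{3d}} \sum_{k_1-k_2+k_3=k} \phi(k_1)\phi(k_2)\phi(k_3)\Bigl[\tfrac{1}{\phi(k)}-\cdots\Bigr] \cdot \bigl|{\textstyle\int_0^t} e^{2\pi i s\,\varOmega}\,ds\bigr|^2,
\]
whose Riemann-sum structure, after integrating the time kernel $|\int_0^t e^{is\varOmega}|^2 = t\, K_t(\varOmega)$ against the Dirac-like weight $K_t$ of width $1/t$, is exactly the discrete analogue of $\frac{t}{\tau}\mathcal T(\phi)(k)$. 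All other pairings either factor trivially (contributing to renormalization absorbed in $\phi(k)$) or are non-resonant and hence smaller.

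The main obstacle, and the place where I expect the true work to lie, is step (iii): proving that the passage from the discrete sum over $\mathbb Z_L^d$ to the integral defining $\mathcal T(\phi)$ costs only $L^{-\delta}$ uniformly in $k$. Since $t$ can be as large as $L^{3-\gamma}/\lambda^2$, the kernel $K_t$ concentrates on the set $\{|\varOmega|\lesssim 1/t\}$ at a scale \emph{much finer} than $L^{-2}$, so this is a delicate equidistribution problem for the quasi-resonant level sets of the irrational quadratic form $\varOmega(k_1,k_2,k_3) = Q(k_1)-Q(k_2)+Q(k_3)-Q(k)$. My plan is to invoke, for a full-measure set of $\beta$, Bourgain's pair correlation theorem for irrational quadratic forms \cite{Bourgain} in the regime it covers, and to extend it to the non-diagonal and higher-dimensional settings needed here. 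In parallel I must control all higher-order trees $J_n$ with $n \geq 2$: each additional internal vertex must pay for itself by introducing either a factor of $\lambda^2/L^d$ (small) or an extra oscillatory integral that is beaten down by the same equidistribution input; combining these gains with the probabilistic Strichartz bounds from (i) should close the error estimate with a margin of $L^{-\delta}$ for some positive $\delta$, yielding \eqref{theo1eq}.
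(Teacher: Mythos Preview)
Your proposal is correct and follows essentially the same architecture as the paper: Feynman-tree expansion with parity pairing (Section~\ref{feynman}), long-time local well-posedness via Khinchine-improved linear bounds fed into the deterministic generic-torus Strichartz estimate of \cite{DGG} (Sections~\ref{LWP section}--\ref{KhinchineSection}), and equidistribution of the irrational form $\varOmega$ at scales $\ll L^{-2}$ via an extension of Bourgain's pair-correlation result (Section~\ref{sectnumbtheo}). One point you should make explicit is that the ``trivially factoring'' pairings you allude to---the fully degenerate trees---are individually of size $(t/\sqrt\tau)^S$, which is too large, and are not absorbed into $\phi(k)$ but rather \emph{cancel exactly} among themselves at each total order $S$ (the paper's Lemma~\ref{cancellation}, equivalently Wick renormalization); this cancellation is what makes the $1/t$ gain of Proposition~\ref{merganser} possible.
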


We note that the right-hand side of \eqref{theo1eq} is nothing but the first time-iterate of the wave kinetic equation \eqref{KW} with initial data $\phi$ (cf.\ \eqref{random data}) which coincides (up to the error term in \eqref{theo1eq}) with the exact solution of the \eqref{KW} over long times scales, but shorter than the kinetic timescale $\tau$.

The proof this theorem can be split into three   components:

\begin{enumerate}[(1), leftmargin =*]
\item { Section \ref{feynman}: Feynman tree representation.}  In this section  we derive the Taylor expansion of the nonlinear solution in terms of the initial data. Roughly speaking, we write the Fourier modes of the nonlinear solution $a_k(t)$ (see \eqref{uFourierseries}) as follows:
\[
a_ k(t) = \sum\limits_{n = 0}^N \jm_n(t, k)(\boldsymbol{a}^{(0)}) +R_{N+1}(t,  k)(\boldsymbol{a}^{(t)}),
\]
where $\jm_n$ are sums of monomials of degree $2n+1$ in the initial data $\boldsymbol{a}^{(0)}$,
and $R_N$ is the remainder which depends on the nonlinear solution $\boldsymbol{a}^{(t)}$. Each term of $\jm_n$ can be represented by a Feynman tree which makes the calculations of 
 $\mathbb E( \jm_n \jm_{n'})$ more transparent.  Such terms appear  in the expansion of $\mathbb E |a_k|^2$.  The estimates in this section rely on essentially sharp bounds on quasi-resonant sums of the form
\begin{equation}\label{samplesum}
\sum_{\substack{\vec k \in \Z_L^{rd}\\ }} \mathds{1} (|\vec k|\lesssim 1) \mathds{1}(|\mathcal Q(k)|\sim 2^{-A})\lesssim 2^{-A}L^{rd}
\end{equation}
where $\mathds1(S)$ denotes the characteristic function of a set $S$ and $\mathcal Q$ is an irrational quadratic form. Since $A$ will be taken large  $2^A\sim T \gg L^2$, such estimates belong to the realm of number theory and will be a consequence the third  component of this work.  

The bounds we obtain for such interaction are good up to times of order  $\sqrt \tau$ which is sufficient given the restrictions on the time interval of convergence imposed by the second component below.  

\item Section \ref{LWP section}:  Construction  of solutions. In this section  we construct  solutions on a time interval $[0, T]$ via a contraction mapping argument. To maximize $T$ while maintaining a contraction, we rely on the Khinchine improvement to the space-time Strichartz bounds,
as well as  the long-time Strichartz estimates on generic irrational tori proved in \cite{DGG}. It is here that our estimates are  very far from optimal, since there is no proof  to the  conjectured optimal  Strichartz estimates.

\item Section \ref{sectnumbtheo}: Equidistribution of irrational quadratic forms.  The purpose of this section is two-fold.  The first is proving bounds on quasi-resonant sums like those in \eqref{samplesum} for the largest possible $T$, and the second is to extract the exact asymptotic, with effective error bounds,  of the leading part of the sum.  It is this leading part 
that converges to the kinetic equation collision kernel as $L\to \infty$. 

Here we remark, that if $\mathcal Q$ is a rational form, then the largest $A$ for which one could hope for an estimate like \eqref{samplesum} is $2^A\sim L^2$ which reflects the fact that a rational quadratic form cannot be equidistributed at scales smaller than $L^{-2}$ (at the level of NLS, it would yield a time interval restriction of $T\lesssim L^2$ for the rational torus). However, for generic irrational quadratic forms, $\mathcal Q$ is actually equidistributed at much finer scales than $L^{-2}$. Here, we adapt a recent work of Bourgain \cite{Bourgain} which will allow us to reach equidistribution scales essentially up to $L^{-d}$.
\end{enumerate}

\subsection{Notations} In addition to the notation introduced earlier for $\mathbb{T}_L^d = [0,L]^d$ and $\mathbb{Z}^d_L = \frac{1}{L} \mathbb{Z}^d$, we use standard notations.   A function $f$ on $\mathbb{T}_L^d$ and its Fourier transform $\widehat{f}$ on $\mathbb{Z}^d_L$ are related by
$$
f (x) = \frac{1}{L^d} \sum\limits_{\mathbb{Z}^d_L} \widehat{f}_k  e^{ 2 \pi i k\cdot x} \qquad \mbox{and} \qquad \widehat{f}_k = \int\limits_{\mathbb{T}^d_L}  e^{ -2 \pi i k\cdot x}f(x) \, dx.
$$
Parseval's  theorem becomes
\[
\| f \|^2_{L^2(\mathbb{T}^d_L)} = \| \widehat{f} \|^2_{\ell_L^2 (\mathbb{Z}^d_L)} = \frac{1}{L^d} \sum\limits_{k \in \mathbb{Z}^d_L} |\widehat{f}_k|^2 .
\]
We adopt the following definition for weighted $\ell^p$ spaces: if $p\geq 1$, $s\in \mathbb{R}$, and $b\in \ell^p$,
$$
\| b \|_{\ell^{p,s}_L(\mathbb{Z}_L^d)} = \left[ \frac{1}{L^d}\sum\limits_{k \in \mathbb{Z}^d_L} ( \langle k \rangle^{s} |b_k|)^p \right]^{1/p}.
$$

Sobolev spaces $H^s(\mathbb{T}^d)$ are then defined naturally by
$$
\| f \|_{H^s(\mathbb{T}^d)} = \| \langle k \rangle^s \widehat{f} \|_{\ell^{2,s}(\mathbb{Z}_L^d)}.
$$
For functions defined on $\mathbb{R}^d$, we adopt the normalization
\[f (x) = \int\limits_{\mathbb R^d}  e^{ 2 \pi i \xi \cdot x}  \widehat{f}(\xi) \,d\xi \qquad \mbox{and} \qquad \widehat{f}(\xi) = \int\limits_{\mathbb{R}^d}  e^{ -2 \pi i k\cdot x}f(x) \, dx.\]

We denote by  $C$  any constant whose value does not depend on $\lambda$ or $L$.
The notation $A \lesssim B$ means that there exists a constant $C$ such that $A \leq C B$. We also write
$
A \lesssim L^{r^+} B
$,
if for any $\epsilon > 0$ there exists $C_\epsilon$ such that $A \leq C_\epsilon L^{r+\epsilon} B$. Similarly  $A \gtrsim L^{r^-} B$,
if for any $\epsilon > 0$ there exists $C_\epsilon$ such that $A \geq C_\epsilon L^{r-\epsilon} B$. Finally we use the notation $u =O_X(B)$ to mean $\|u\|_X\lesssim B$.

\emph{We would like to thank Peter Sarnak for pointing us to unpublished work by Bourgain \cite{Bourgain}. This reference helped us improve an earlier version of our work. We also would like to thank Peter and Simon Myerson for many helpful and illuminating discussions.}

\section{The general result} \label{Precise results}

We start by writing  the equations for the interaction representation $(a_k(t))_{k \in \Z^d_L}$,  given in \eqref{uFourierseries}:
\begin{equation}\label{akeqn}
\begin{cases}
 i \dot{a_k} = - \left(\frac{\lambda}{L^{d}}\right)^{2} \sum\limits_{\substack{(k_1,\ldots, k_{3}) \in (\mathbb{Z}^d_L)^3 \\ k - k_1 + k_2 -k_3 = 0}} a_{k_1}\overline{a_{k_2}}  a_{k_3} e^{-2\pi it \varOmega(k,k_1,k_2,k_3)} \\[3em]
a_k(0) = a_k^{0} = \sqrt{\phi(k)} e^{i \vartheta_k(\omega)},
\end{cases}
\end{equation}
where we recall 
$
\varOmega(k,k_1,k_2,k_3) = Q(k) - Q(k_1) + Q(k_2)- Q(k_3), 
$
 and $\vartheta_k(\omega)$ are i.i.d.\ random variables that are uniformly distributed in $[0,2\pi]$.
Our results depend on two parameters: the equidistribution parameter $\nu$, and a Strichartz parameter $\theta_p$, which we now explain.

\subsection{The Equidistribution parameter $\nu$}

The interaction frequency  $\varOmega (k,k_1,k_2, k_3)$ above is an irrational quadratic form. Such quadratic forms can be equidistributed at scales that are much smaller than the finest scale $\sim L^{-2}$ of rational forms. 

We will denote by $\nu$ the largest real number such that  for all $k\in\mathbb{Z}^d_L$,  $|k|\le 1$, and  $\epsilon>0$, there exists $\delta >0$ such that, for $|a|, |b|<1$ with $b-a \geq L^{-\nu^-}$,
\[
\sum\limits_{\substack{a\leq \varOmega(k,k_1,k_2,k_3) \leq b\\ \abs{k_1},\abs{k_2} , \abs{k_3}\leq 1 \\ k-k_1+k_2-k_3 = 0}} 1 = (1+O(L^{-\delta})) L^{2d}\hskip -2mm \int\limits_{|k_1|, |k_2|, |k_3| \leq 1} 
\hskip -2mm \mathds{1}_{a\leq \varOmega(k,k_1,k_2,k_3) \leq b} \delta(k-k_1+k_2-k_3)\,dk_1 \, dk_2 \, dk_3.
\]

\begin{proposition} With the above definition for $\nu$, we have
\begin{itemize}
\item[(i)] If $\beta_i = 1$ for all $i \in \{1,\dots,d\}$, $\nu= 2$.
\item[(ii)] If the $\beta_i$ are generic, $\nu = d$.
\end{itemize}
\end{proposition}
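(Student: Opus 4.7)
I would first perform a common reduction that simplifies both parts. Use momentum conservation to set $k_3 = k - k_1 + k_2$, and pass to the variables $u = k_1 - k$, $v = k_2 - k$, $w = v - u$. A direct expansion (using $Q(x+y) = Q(x) + Q(y) + 2B(x,y)$ with $B(x,y) = \sum_i \beta_i x_i y_i$) yields the $k$-independent identity
\begin{equation*}
\varOmega(k,k_1,k_2,k_3) = -2Q(u) + 2B(u,v) = 2 \sum_{i=1}^d \beta_i u_i w_i.
\end{equation*}
The cutoffs $|k_i|\le 1$ translate to bounded cutoffs in $(u,w)$, and the right-hand side of the equidistribution statement transforms compatibly. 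So the problem reduces to equidistribution of the bilinear form $B_\beta(u,w) = 2\sum \beta_i u_i w_i$ over $(u,w) \in (\Z_L^d)^2$ in a bounded region. A further rotation $p_i = u_i + w_i$, $q_i = u_i - w_i$ diagonalizes $B_\beta = \tfrac{1}{2}\sum \beta_i(p_i^2 - q_i^2)$, an indefinite diagonal quadratic form in $2d$ variables of signature $(d,d)$.

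For part (i), with $\beta_i = 1$, the form $\varOmega$ takes values in $\tfrac{2}{L^2}\Z$, so any window $[a,b]$ of length strictly less than $2/L^2$ can be empty while the continuous integral is positive; this immediately gives $\nu \le 2$. For the matching lower bound at scale $b-a = L^{-2+\eps}$, the window meets $\sim L^\eps$ levels of the integer equation $\sum_i n_i m_i = N$ with $(n,m) \in \Z^{2d} \cap [-L, L]^{2d}$. Standard lattice-point counts on these rational level sets (by circle method or classical divisor-type bounds, which are immediate for $d \ge 3$) yield $\sim c_d L^{2d-2}$ points per level, and summation reproduces the expected continuous density $\sim L^{2d}(b-a)$ with the required relative error.

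For part (ii), the upper bound $\nu \le d$ follows from the resonant locus $u = 0$: the $\sim L^d$ lattice points with $u = 0$ and $|v| \le 1$ all satisfy $\varOmega = 0$ exactly, so any window of length $L^{-d-\delta}$ containing $0$ is counted as $\gtrsim L^d$ while the continuous integral gives only $\sim L^{d-\delta}$. For the lower bound $\nu \ge d$, I would invoke the adaptation of Bourgain's work \cite{Bourgain} to be carried out in Section \ref{sectnumbtheo}. The strategy: smooth $\mathds{1}_{[a,b]}(\varOmega)$ by a bump $\psi$, Fourier-invert to express the count via $\int \widehat{\psi}(\xi) e(\xi \varOmega)\,d\xi$, and apply Poisson summation in $(u,w)$ so that the zero dual frequency recovers the continuous integral while nonzero dual frequencies contribute oscillatory sums of the form $\sum_{(u,w)} \chi(u) \chi(w) e(\xi B_\beta(u,w))$. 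The heart of the argument, and the main obstacle, is to bound these Weyl-type sums uniformly down to Fourier scale $|\xi| \sim L^d$. A direct Weyl / van der Corput differencing only gains up to $|\xi| \sim L^2$ --- which is exactly why the rational case stalls at $\nu = 2$ --- so one must exploit the generic Diophantine properties of $(\beta_i)$ through the pair-correlation estimates for irrational diagonal quadratic forms developed by Bourgain, which deliver the additional cancellation all the way to physical scale $L^{-d}$.
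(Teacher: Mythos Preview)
Your proposal is correct and follows the same path as the paper: the paper's proof simply cites~\cite{BGHS} for (i) and defers (ii) to Section~\ref{sectnumbtheo}, and your reduction of $\varOmega$ to the diagonal indefinite form $Q(p,q)=\sum_i\beta_i(p_i^2-q_i^2)$ via $u=k_1-k$, $w=k_2-k_1$, then $p=u+w$, $q=u-w$, is exactly the transformation carried out there. Your upper-bound argument $\nu\le d$ from the degenerate locus $u=0$ also matches the paper's Remark~\ref{rk:degenerate}.

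One correction to your sketch of the Bourgain mechanism: Poisson summation plus Weyl/van der Corput is indeed how the coarse-scale asymptotic (Proposition~\ref{th:trivial}) is obtained, but it is \emph{not} how the fine-scale equidistribution down to $b-a\sim L^{-d+1+\epsilon}$ is reached. The actual argument in Theorem~\ref{t:Bourgain} passes through a logarithmic change of variables, decomposes the count as $\int S_1(t)\overline{S_2(t)}e^{-it\ln\beta_d}\cdots\,dt$ with $S_2$ a sum of the form $\sum (p_d^2-q_d^2)^{it}$, applies the Mellin transform and the classical fourth-moment bound $\int_0^T|\zeta(\tfrac12+it)|^4\,dt\lesssim T^{1+}$, and controls $S_1$ by $L^2$-averaging over $\beta_2$ and $\beta_d$. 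The genericity of $\beta$ thus enters through a measure-theoretic (Chebyshev/Borel--Cantelli) argument rather than through Diophantine approximation inequalities on $(\beta_i)$ as you anticipate.
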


\begin{proof} The first assertion  is classical, e.g., see~\cite{BGHS}. The second assertion is proved in Section~\ref{sectnumbtheo}.
\end{proof}

\subsection{The Strichartz parameter $\boldsymbol{\theta_d}$}
Our proof relies  on long-time Strichartz estimates,  which are used to maintain  linear bounds for  the nonlinear problem. The genericity  of the $\beta$'s  gives crucial improvements from the rational case. The improved estimates for generic $\beta$'s were proved in  \cite{DGG},
\[
 \|e^{it\Delta_{\beta}} P_N \psi\|_{L^p_{t,x}([0,T]\times \mathbb{T}^d)} \lesssim
N^{\frac{d}{2} - \frac{d+2}{p}}  \left(1 +  \frac{T}{N^{\gamma(d,p)}}\right)^{1/p} \|\psi\|_{L^2(\mathbb{T}^d)} 
\]
for some $0\le \gamma(d,p) \le d-2$. The $N^{\gamma}$ term can be thought of as the time it takes for a focused wave with localized  wave number $\leq N$, to focus again.  For the rational torus $\gamma=0$. 

Here we only need to use the  ${L^4_{t,x}([0,T]\times \mathbb{T}_L^d)}$ norm, and therefore we introduce  a parameter $\theta_d$ to record how the constant in the ${L^4_{t,x}([0,T]\times \mathbb{T}_L^d)}$  estimates depends on $L$.  By scaling,  the result in  \cite{DGG}  translates into,
\begin{equation}\label{impStrich}
 \|e^{it\Delta_{\beta}} P_{k\le 1} \psi\|_{L^4_{t,x}([0,T]\times \mathbb{T}_L^d)} \lesssim
L^{0^+}  \left(1 +  \frac{T}{L^{\theta_d}}\right)^{1/4} \|\psi\|_{L^2(\mathbb{T}_L^d)} 
\end{equation}
where $\theta_d:=\begin{cases} 
\frac{4}{13}+2, \qquad d=3\\
\frac{(d-2)^2}{2(d-1)}+ 2, \qquad d\geq 4.
		\end{cases}$

\subsection{The approximation theorem}  With these parameters defined, we  state  the approximation theorem for the cubic NLS in  dimension $d\geq 3$ and generic $\beta$'s.

\begin{theorem}\label{cubic theorem}
Assume the $\beta$'s are generic and  $d\geq 3$. Let $\phi_0: \mathbb R^d \to \mathbb R^+$, a rapidly decaying smooth function. Suppose that $a_k(0)=\sqrt{\phi(k)}e^{i\vartheta_k(\omega)}$ where $\vartheta_k(\omega)$ are i.i.d.\ random variables uniformly distributed in $[0,2\pi]$. For every $\epsilon_0$, a  sufficiently small constant, and $L>L_*(\epsilon_0)$ sufficiently large,  the following holds:

 There exists a set $E_{\epsilon_0, L}$ of measure $\mathbb P(E_{\epsilon_0, L})\geq 1-e^{-L^{\epsilon_0}}$ such that: if $\omega\in E_{\epsilon_0,L}$ ,  then for any $L>L_*(\epsilon_0)$, the solution $a_k(t)$ of~\eqref{NLS}  exists in $C_tH^s([0, T] \times \mathbb T^d_L)$ for 
$$
T\sim 
\begin{cases}
\lambda^{-2}L^{\frac{d+ \theta_d}{2}-4\epsilon_0} \qquad\quad \text{if} \qquad 
 L^{\frac{-d+ \theta_d }{4} }\lesssim \lambda \lesssim L^{\frac{d- \theta_d }{4}-2\epsilon_0}\, 
,\\
 \lambda^{-4}L^{d-8\epsilon_0} \qquad \qquad \text{if} \qquad \lambda \geq L^{\frac{d-\theta_d}{4}-2\epsilon_0}.
\end{cases}
$$

Moreover,
$$
\mathbb{E} \left[  |a_k(t)|^2 \mathds{1}_{E_{\epsilon_0,L}} \right] = \phi(k)+\frac{t}{\tau} \mathcal{T}_3(\phi)(k) +  O_{\ell^\infty } \left( L^{-\epsilon_0} \frac{t}{\tau} \right), \quad L^{\epsilon_0} \leq t \leq T, \text{ and }\tau=\frac{L^{2d}}{2\lambda^4}.
$$

For $d=3, 4$, the solutions exist globally in time \cite{Bourgain1993, IoPau}, and one has the same estimate without multiplying with $\mathds{1}_{E_{\epsilon_0}}$ inside the expectation.

\end{theorem}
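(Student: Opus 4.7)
The plan is to combine the three ingredients outlined in the introduction. First, I would iterate Duhamel on \eqref{akeqn} to produce the Feynman-tree expansion
\[
a_k(t) = \sum_{n=0}^{N} \jm_n(t,k)(\boldsymbol{a}^{(0)}) + R_{N+1}(t,k)(\boldsymbol{a}^{(t)}),
\]
where $\jm_n$ is a sum over decorated binary trees of $2n+1$ leaves, each contributing an iterated time integral of oscillatory factors $e^{2\pi i s \varOmega}$ against a monomial in $\{a_k^{(0)}, \overline{a_k^{(0)}}\}$ weighted by $(\lambda^2/L^d)^n$. Quasi-resonant sums of type \eqref{samplesum}, applied at each internal node of the tree with the equidistribution exponent $\nu=d$ from Section \ref{sectnumbtheo}, give pointwise bounds of size $(T\lambda^2/L^d)^n$ per tree up to combinatorial factors polynomial in $n$; for $N$ polylogarithmic in $L$ the truncation is absolutely convergent in $\ell^\infty_k$ provided $T\lambda^2/L^d \ll 1$.

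Second, I would run a contraction in a Strichartz-type space $X_T = L^\infty_t\ell^{2,s}_L \cap L^4_{t,x}([0,T]\times\mathbb{T}_L^d)$, defining $E_{\epsilon_0,L}$ as the event on which the linear random solution $e^{it\Delta_\beta}u_0$ satisfies the Khinchine-improved Strichartz bound
\[
\|e^{it\Delta_\beta}u_0\|_{L^4_{t,x}([0,T]\times\mathbb{T}_L^d)} \lesssim L^{0^+}\bigl(1 + T L^{-\theta_d}\bigr)^{1/4}.
\]
A Bernstein-type concentration estimate for the independent phases $\vartheta_k$ gives $\mathbb{P}(E_{\epsilon_0,L}^c)\leq e^{-L^{\epsilon_0}}$. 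The deterministic estimate \eqref{impStrich} then transfers the bound to the nonlinear Duhamel iterate, and closing the cubic contraction reduces to $T \lambda^2 L^{-d}(1 + T L^{-\theta_d})^{1/2} \ll 1$; the two regimes $T\leq L^{\theta_d}$ and $T\geq L^{\theta_d}$ give respectively the two branches of $T$ stated in the theorem. In $d\in\{3,4\}$, the global well-posedness results of \cite{Bourgain1993, IoPau} let one drop the indicator $\mathds{1}_{E_{\epsilon_0,L}}$, since the solution is defined for all $\omega$.

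Third, I would substitute the expansion into $\mathbb{E}[|a_k(t)|^2 \mathds{1}_{E_{\epsilon_0,L}}]$ and use that $\mathbb{E}e^{i\sum m_j \vartheta_{k_j}}$ vanishes unless the frequency multi-indices pair perfectly, so only pairing diagrams survive. The $(n,m)=(0,0)$ pairing contributes $\phi(k)$. The leading nontrivial contribution comes from the $\jm_1\overline{\jm_1}$ pairing together with the $\jm_0 \overline{\jm_2}$ and conjugate cross-pairings; after invoking the asymptotic form of \eqref{samplesum} from Section \ref{sectnumbtheo} to convert the discrete sum over $\{\varOmega\approx 0\}$ into the collision integral against the surface measure $\delta(\varOmega)\delta(\varSigma)$, their combined value equals exactly $\frac{t}{\tau}\mathcal{T}_3(\phi)(k)$. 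All higher pairings, all non-pairing diagrams, and the contribution of the remainder $R_{N+1}$ are bounded by \eqref{samplesum} and the contraction estimates as $O_{\ell^\infty}(L^{-\epsilon_0} t/\tau)$; the $E_{\epsilon_0,L}^c$ contribution is negligible against the super-polynomial measure bound.

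The main obstacle is the simultaneous calibration of $N$, the equidistribution exponent $\nu$, and the Strichartz exponent $\theta_d$. The combinatorial growth of $\jm_n$ is at least factorial in $n$ and must be strictly dominated by the power gain $(T\lambda^2/L^d)^n$, while the error $L^{-\epsilon_0} t/\tau$ can be realized only if the equidistribution result of Section \ref{sectnumbtheo} carries an effective power-saving error of size $L^{-\delta}$ at quasi-resonance scale $2^{-A}\sim T^{-1}\ll L^{-2}$. This is the technical heart of the proof: it requires adapting Bourgain's unpublished work on pair correlation of irrational quadratic forms, and the gap between $\theta_d$ and the conjectural Strichartz exponent $d-2$ is what ultimately forces $T$ to fall short of the full kinetic timescale $\sqrt{\tau}\sim L^d/\lambda^2$.
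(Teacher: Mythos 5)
Your outline captures the right architecture (tree expansion, Khinchine-improved contraction, equidistribution), but it has a genuine gap at the heart of the diagrammatic estimate: you never address the \emph{degenerate} trees, i.e.\ those interactions in which every transition satisfies $k_{j,\ell_j}\in\{k_{j-1,\ell_j},k_{j-1,\ell_j+2}\}$ so that $\varOmega_j\equiv 0$. For these terms there is no oscillation to exploit, the quasi-resonant count \eqref{samplesum} gives nothing, and an exact computation (cf.\ \eqref{e:degenerate_decomp}) shows $\mathbb E\big(D_{n,\boldsymbol\ell}\overline{D_{n',\boldsymbol\ell'}}\big)\sim (t/\sqrt\tau)^{S}$ with $S=n+n'$; already at $S=2$ this is $t^2/\tau$, which is a factor $t\,L^{\epsilon_0}$ \emph{larger} than the error $L^{-\epsilon_0}t/\tau$ you claim for ``all higher pairings.'' These contributions do not become small --- they cancel: summing over all trees with $n+n'=S$ produces the combinatorial identity $\sum_{n+n'=S}i^{n-n'}/(n!\,n'!)=0$ (Lemma \ref{cancellation}, essentially Wick renormalization/gauge invariance). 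A related point: even for the non-degenerate trees, the per-node bound $(T\lambda^2/L^d)^n$ you quote is only the trivial estimate; the proof needs the additional gain of $1/t$ in Proposition \ref{merganser}, obtained by isolating the first non-degenerate transition and running the dyadic quasi-resonant count \eqref{mallard1} on the two free momenta it creates. Without both the cancellation and the $1/t$ gain, the terms with $S\geq 2$ (degenerate) and $S\geq 3$ (non-degenerate) are not errors relative to $t/\tau$ on the stated time interval.

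Separately, your contraction condition $T\lambda^2L^{-d}\big(1+TL^{-\theta_d}\big)^{1/2}\ll1$ does not reproduce the two branches of $T$ in the theorem. The random data have $\|e^{it\Delta_\beta}u_0\|_{Z^s_T}\sim\mathscr I=(TL^{-d})^{1/4}$, so closing the cubic map requires $\lambda^2 S_*^2\mathscr I^2\sim\lambda^2(TL^{-d})^{1/2}\langle TL^{-\theta_d}\rangle^{1/2}\ll1$; your version carries $(TL^{-d})^{1}$ in place of $(TL^{-d})^{1/2}$ and in the regime $T\gg L^{\theta_d}$ yields $T\lesssim\lambda^{-4/3}L^{(2d+\theta_d)/3}$ rather than $\lambda^{-2}L^{(d+\theta_d)/2}$. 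The correct bookkeeping is what produces the stated numerology.
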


Here we note that
the error could be controlled in a much stronger norm than $\ell^\infty$, and that
other randomizations of the data are possible (complex Gaussians for instance) without any significant changes in the proof.
\section{Formal derivation of the kinetic equation}\label{formalsection}

In this section, we present the formal derivation of the kinetic equation, whose basic steps we shall follow in the proof. The starting point is  equation \eqref{akeqn}   integrated in time,

%
%
%
\begin{equation}\label{e:induct_exp}
a_k(t) = a_k^0  + \frac{i\lambda^2}{L^{2d}}\int\limits_0^t \sum\limits_{\substack{(k_1,k_2,k_3) \in (\mathbb{Z}^d_L)^3 \\ k - k_1 + k_2 - k_3 = 0}} a_{k_1} \overline{a_{k_2}} a_{k_3} e^{- 2\pi is \varOmega(k,k_1,k_2,k_3)}\, ds
\end{equation}

The derivation of the kinetic equation proceeds as follows:

\underline{Step 1: expanding in the data.} Noting the symmetry in \eqref{e:induct_exp} in the variables $k_1$ and $k_3$, we have upon integrating by parts twice, and substituting   \eqref{akeqn}  for $\dot a_k$, 
\begin{subequations}
\begin{align}
\label{penguin1} a_k(t) = & a_k^0 \\
\label{penguin2}
& + \frac{\lambda^2}{L^{2d}} \sum\limits_{k - k_1 + k_2 - k_3=0} a_{k_1}^0 \overline{a_{k_2}^0} a_{k_3}^0 \frac{1-e^{-2\pi i t\varOmega(k,k_1,k_2,k_3)}}{2\pi  \varOmega(k,k_1,k_2,k_3)} \\
\label{penguin3}
&  \nonumber+ 2 \frac{\lambda^4}{L^{4d}} \sum\limits_{\substack{k-k_1+k_2-k_3 = 0 \\ k_1-k_4+k_5-k_6 = 0}} a_{k_4}^0 \overline{a_{k_5}^0} a_{k_6}^0 \overline{a_{k_2}^0} a_{k_3}^0 \frac{1}{2\pi   \varOmega(k,k_1,k_2,k_3)} \\
&\left[ \frac{e^{-2\pi it\varOmega(k,k_4,k_5,k_6,k_2,k_3)}-1}{2\pi \varOmega(k,k_4,k_5,k_6,k_2,k_3)} - \frac{e^{-2\pi it  \varOmega(k_1,k_4,k_5,k_6)} - 1}{2\pi \varOmega(k_1,k_4,k_5,k_6)} \right] \\
\label{penguin4}
& \nonumber  + \frac{\lambda^4}{L^{4d}}  \sum\limits_{\substack{k-k_1+k_2-k_3 = 0 \\ k_2-k_4+k_5-k_6 = 0}} a_{k_1}^0 \overline{a_{k_4}^0} a_{k_5}^0 \overline{a_{k_6}^0}   a_{k_3}^0  \frac{1}{2\pi   \varOmega(k,k_1,k_2,k_3)} \\
& \left[ \frac{e^{-2\pi it\varOmega(k,k_1,k_4,k_5,k_6,k_3)}-1}{2\pi  \varOmega(k,k_1,k_4,k_5,k_6,k_3)} - \frac{e^{-2\pi it  \varOmega(k_2,k_4,k_5,k_6)} - 1}{2\pi  \varOmega(k_2,k_4,k_5,k_6)} \right] \\
\label{penguin5}
& + \{ \mbox{higher order terms} \}.
\end{align}
\end{subequations}
where we denoted $\varOmega(k,k_1,k_2,k_3,k_4,k_5) = Q(k) + \sum_{i=1}^5(-1)^iQ(k_i)$; we also used the convention that, if $a=0$, $\frac{e^{2\pi i ta} -1}{2\pi a} = it$, while, if $a=b=0$, $\frac{1}{2\pi a}\left( \frac{e^{2\pi i t(a+b)}-1}{2\pi(a+b)} - \frac{e^{2\pi i t a}-1}{2\pi a} \right) = -\frac{1}{2} t^2 $.

\underline {Step 2: parity pairing.} We now compute $\mathbb{E} |a_k|^2$, where the expectation $\mathbb{E}$ is understood with respect to the random phases (random parameter $\omega$). The key observation is,
$$
\mathbb{E} (a^0_{k_1} \dots a_{k_s}^0 \overline{a_{\ell_1}^0 \dots a_{\ell_s}^0} ) =
\left\{
\begin{array}{ll}
\phi_{k_1} \dots \phi_{k_s} & \mbox{if there exists a $\gamma$ such that $k_{\gamma(i)} = \ell_i$} \\
0 & \mbox{otherwise}.
\end{array}
\right.
$$

(for $k \in \mathbb{Z}^d_L$, we write $\phi_k = \phi(k)$). Computing $\mathbb{E}\left( |a_k|^2\right)$ with the help of the above formula, we see that, there are no terms of order $\lambda^2$. There are two kinds of terms of
order $\lambda^4$ obtained as follows: either by pairing the term of order $\lambda^2$, namely~\eqref{penguin2}, with its conjugate, or by pairing one of the terms of order $\lambda^4$,~\eqref{penguin3} or~\eqref{penguin4}, with the term of order $1$, namely $a_k^0$. Overall, this leads to
\begin{align*}
\mathbb{E} |a_k|^2(t) & = \phi_k + \frac{2 \lambda^4}{L^{4d}}\hskip -2mm \sum\limits_{k - k_1 + k_2 - k_3=0} \hskip -4mm\phi_k \phi_{k_1} \phi_{k_2} \phi_{k_3} \Big[ \frac{1}{\phi_k} - \frac{1}{\phi_{k_1}} + \frac{1}{\phi_{k_2}} - \frac{1}{\phi_{k_3}} \Big] \Big| \frac{\sin(t \pi \varOmega(k,k_1,k_2,k_3))}{\pi \varOmega(k,k_1,k_2,k_3)} \Big|^2 \\
& \qquad + \{ \mbox{higher order terms} \} + \{ \mbox{degenerate cases} \},
\end{align*}
where  degenerate cases  occur for instance if $k$, $k_1$, $k_2$, $k_3$ are not distinct\footnote{Degenerate cases, like higher order terms, have smaller order of magnitude, on the timescales we consider as will be illustrated in Section \ref{feynman}.}.
The details of the computation are as follows:
\begin{enumerate}[(a), leftmargin =*]
\item Consider first $\mathbb{E} |\eqref{penguin2}|^2 = \mathbb{E} \eqref{penguin2} \overline{\eqref{penguin2}}$, and denote $k_1,k_2,k_3$ the indices in \eqref{penguin2} and  $k_1',k_2',k_3'$ the indices in $\overline{\eqref{penguin2}}$. There are two possibilities:
\begin{itemize}[leftmargin =*]
\item $ \{ k_1,k_3 \} = \{ k_1', k_3' \}$, in which case  $k_2=k_2'$, and $\varOmega(k,k_1,k_2,k_3) = \varOmega(k,k_1',k_2',k_3')$. 
\item  $(k_2 = k_1 \; \mbox{or}\; k_3) \; \mbox{and}\;(k_2'  = k_1' \; \mbox{or} \; k_3' )$, in which case $\varOmega(k,k_1,k_2,k_3) = \varOmega(k,k_1',k_2',k_3') = 0$. 
\end{itemize}

Overall, we find, neglecting degenerate cases (which occur  if $k$, $k_1$, $k_2$, $k_3$ are not distinct),
$$
\mathbb{E} |\eqref{penguin2}|^2 = \frac{2 \lambda^4}{L^{4d}} \sum\limits_{k - k_1 + k_2 - k_3=0}  \phi_{k_1} \phi_{k_2} \phi_{k_3} \left| \frac{\sin(\pi t\varOmega(k,k_1,k_2,k_3)) }{\pi \varOmega(k,k_1,k_2,k_3)} \right|^2  + \frac{4 \lambda^4}{L^{4d}} t^2 \sum\limits_{k_1,k_3} \phi_k \phi_{k_1} \phi_{k_2}.
$$

\item Consider next the pairing of $a_k^0$ with \eqref{penguin3}, which contributes $2 \mathbb{E} \mathfrak{Re} \left[ \eqref{penguin3} \overline{a_k^0}\right]$. The possible pairings are
\begin{itemize}
\item $\{k,k_2 \} = \{k_4,k_6 \}$, implying $k_3=k_5$, and leading to 
$\varOmega(k_1,k_4,k_5,k_6) = - \varOmega(k,k_1,k_2,k_3)$, and $\varOmega(k,k_4,k_5,k_6,k_2,k_1)=0$. 

\item $(k_3 = k_2 \; \mbox{or}\; k) \; \mbox{and}\; (k_5  = k_4 \; \mbox{or} \; k_6)$ in which case $\varOmega(k,k_1,k_2,k_3) = \varOmega(k_1,k_4,k_5,k_6) = 0$.
\end{itemize}
This gives, neglecting degenerate cases,
\[
\begin{split}
&2 \mathbb{E}  \mathfrak{Re} \left[ \overline{a_k^0} \eqref{penguin3}\right]  =\frac{8 \lambda^4}{L^{4d}} \times \\
&  \sum\limits_{k - k_1 + k_2 - k_3=0}   \phi_k  \phi_{k_2} \phi_{k_3}  \mathfrak{Re} \left[ \frac{e^{-2\pi it\varOmega(k,k_1,k_2,k_3)} - 1}{4\pi^2 \varOmega(k,k_1,k_2,k_3)^2} \right] - \frac{8 \lambda^4}{L^{4d}} t^2 \sum\limits_{k_1,k_3} \phi_k \phi_{k_2} \phi_{k_3} \\
 & = - \frac{2 \lambda^4}{L^{4d}}  \sum\limits_{k - k_1 + k_2 - k_3=0}   \phi_k \phi_{k_1} \phi_{k_2} \phi_k  \left[ \frac{1}{\phi_{k_1}} + \frac{1}{\phi_{k_3}} \right]  \left| \frac{\sin(\pi t\varOmega(k,k_1,k_2,k_3))}{\pi \varOmega(k,k_1,k_2,k_3)} \right|^2 - \frac{8 \lambda^4}{L^{4d}} t^2 \sum\limits_{k_1,k_3} \phi_k \phi_{k_2} \phi_{k_3},
\end{split}
\]
where we used in the last line the symmetry between the variables $k_1$ and $k_3$, as well as the identity $\mathfrak{Re} (e^{iy} - 1 ) = - 2 |\sin(y/2)|^2$, for $y \in \mathbb{R}$.
\item Finally, the pairing of $a_k^0$ with \eqref{penguin4} can be discussed similarly, to yield
$$
2 \mathbb{E}  \mathfrak{Re} \left[ \overline{a_k^0} \eqref{penguin4}\right] = \frac{2 \lambda^4}{L^{4d}}  \sum\limits_{k - k_1 + k_2 - k_3=0}   \phi_k \phi_{k_1} \phi_{k_3}  \left| \frac{\sin(\pi t\varOmega(k,k_1,k_2,k_3))}{\pi \varOmega(k,k_1,k_2,k_3)} \right|^2 + \frac{4 \lambda^4}{L^{4d}} t^2 \sum\limits_{k_1,k_3} \phi_k \phi_{k_2} \phi_{k_3},
$$
\end{enumerate}
Summing the above expressions for $\mathbb{E} |\eqref{penguin2}|^2$, $2 \mathbb{E}  \mathfrak{Re} \left[ \overline{a_k^0} \eqref{penguin3} \right]$ and $2 \mathbb{E}  \mathfrak{Re} \left[ \overline{a_k^0} \eqref{penguin4}\right]$ gives the desired result.

\underline{Step 3: the big box limit $L \to \infty$.} Assuming that $\varOmega(k,k_1,k_2,k_3)$ is equidistributed on a scale 
\begin{equation}
\label{bluebird}
L^{-\nu} \ll \frac{1}{t},
\end{equation}
we see that, as $L \to \infty$,
\begin{align*}
& \sum\limits_{k - k_1 + k_2 - k_3=0} \phi_k \phi_{k_1} \phi_{k_2} \phi_{k_3}  \left[ \frac{1}{\phi_k} - \frac{1}{\phi_{k_1}} + \frac{1}{\phi_{k_2}} - \frac{1}{\phi_{k_3}} \right]
\left| \frac{\sin(\pi t\varOmega(k,k_1,k_2,k_3))}{\pi \varOmega(k,k_1,k_2,k_3)} \right|^2 \sim \\
&   L^{2d} \int\limits \delta(\varSigma) \phi_k \phi_{k_1} \phi_{k_2} \phi_{k_3} \left[ \frac{1}{\phi_k} - \frac{1}{\phi_{k_1}} + \frac{1}{\phi_{k_2}} - \frac{1}{\phi_{k_3}} \right] \left| \frac{\sin(\pi t\varOmega(k,k_1,k_2,k_3))}{\pi \varOmega(k,k_1,k_2,k_3)} \right|^2 \,dk_1 \, dk_2 \,dk_3.
\end{align*}

 \underline{Step 4: the large time limit $t \to \infty$} Observe  that\footnote{This follows from Plancherel's theorem, and the fact that the Fourier transform of $\frac{1}{\pi} \frac{\sin x}{x}$ is the characteristic function of $[-\frac{1}{2\pi},\frac{1}{2\pi}]$.}  $\int\limits \frac{(\sin x)^2}{x^2} \,dx = \pi^2$, so that, in the sense of distributions,
$$
 \left| \frac{\sin(\pi t\varOmega)}{\pi \varOmega} \right|^2 \sim t \delta(\varOmega) \qquad \mbox{as $t \to \infty$}.
$$
Therefore, as $t \to \infty$,
\begin{align*}
& \sum\limits_{k - k_1 + k_2 - k_3=0} \phi_k \phi_{k_1} \phi_{k_2} \phi_{k_3}  \left[ \frac{1}{\phi_k} - \frac{1}{\phi_{k_1}} + \frac{1}{\phi_{k_2}} - \frac{1}{\phi_{k_3}} \right]
\left| \frac{\sin(\pi t\varOmega(k,k_1,k_2,k_3))}{\pi \varOmega(k,k_1,k_2,k_3)} \right|^2 \\
& \qquad \qquad \sim t L^{2d} \int\limits  \delta(\varSigma) \delta(\varOmega) \phi(k) \phi(k_1) \phi(k_2) \phi(k_3) \left[ \frac{1}{\phi(k)} - \frac{1}{\phi(k_1)} + \frac{1}{\phi(k_2)} - \frac{1}{\phi(k_3)} \right]  \,dk_1 \, dk_2 \,dk_3 \\
& \qquad \qquad = t L^{2d} \mathcal{T}(\phi,\phi,\phi).
\end{align*}


\noindent \underline{Conclusion: relevant timescales for the problem.} Overall, we find, assuming that the above limits are justified
\begin{equation}
\label{grackle}
\mathbb{E} |a_k|^2(t) = \phi_k + 2 \frac{\lambda^4}{ L^{2d}} t \mathcal{T}(\phi,\phi,\phi) + \{ \mbox{lower order terms} \}.
\end{equation}
This suggests that the actual timescale of the problem is
$$
\tau= \frac{ L^{2d}}{2 \lambda^4},
$$
and that, setting $s = \frac{t}{\tau}$, the governing equation should read
\begin{equation}
\label{kineticwave}
\partial_s \phi = \mathcal{T}(\phi,\phi,\phi)
\end{equation}
In which regime is this approximation expected? Let $T$ be the timescale over which we consider the equation.
\begin{itemize}
\item In order for~\eqref{grackle} to hold, the condition~\eqref{bluebird} has to hold, and the limits $L\to \infty$ and $T \to \infty$ have to be taken: one needs
$$
T \ll L^{\nu}, \qquad L \gg1, \qquad \mbox{and} \qquad T\gg1.
$$
\item In order for the nonlinear evolution of~\eqref{kineticwave} to affect an $O(\kappa)$ change on the initial data, the two conditions above should be satisfied; in addition $T$ should be of the order of $\kappa \tau$ (equivalently $s\sim \kappa$). Thus we find the conditions
$$
1\ll T \approx \kappa \tau \ll L^{\nu} \qquad \mbox{and} \qquad  \kappa^{\frac 14}L^{d/2} \gg \lambda \gg  \kappa^{\frac 14}L^{d/2 - \nu/4}.
$$
\end{itemize}

\section{Feynman trees: bounding the  terms in the expansion}\label{feynman}

Since we are considering the problem with rapidly decaying  $\phi$, then the rapid decay of $\phi$ yields all the bounds one needs for wave numbers $|k|\ge L^{0^+}$, thus we might as well consider $\phi$ to be compactly supported.

\subsection{Expansion of the solution in the data}

We follow mostly the notations in Lukkarinen-Spohn~\cite{LS2}, Section 3 (see also \cite{Christ07}).

The iterates of $\phi$, considered in the previous section,   can be represented through trees (at least up to lower order error terms). To explain these trees, let us start with the equation satisfied by the amplitude of the wave number $a_ k$
\[
\begin{split}
&a_k(t) = a_k^0  + \frac{i\lambda^2}{L^{2d}} \int\limits_0^t\sum\limits_{\substack{(k_1,k_2,k_3) \in (\mathbb{Z}^d_L)^3 \\ k - k_1 + k_2 - k_3 = 0}} a_{k_1} \overline{a}_{k_2} a_{k_3} e^{- 2\pi is \varOmega(k,k_1,k_2,k_3)}\, ds,\\
&a_k(t) = a_k^0  + \frac{i\lambda^2}{L^{2d}} \int\limits_0^t\mathscr{P}_3(a)(s) e^{- 2\pi is \varOmega}\, ds.
\end{split}
\]
where the subscript in $\mathscr{P}_3$ is to indicate that it is a monomial of degree $3$, and where we suppressed the $k$ dependence for convenience.   The expansion can be obtained by integrating by parts on the oscillating factor $ e^{- 2\pi is \varOmega}$.  Thus the first integration by parts gives the cubic expansion,
\[
a_k(t) = a_k^0  +   \frac{i\lambda^2}{L^{2d}}\mathscr{P}_3(a)(0)F_0^t +  \frac{i\lambda^2}{L^{2d}} \int\limits_0^t \dot{\mathscr{P}}_3(a)(s) F_s^t\, ds,\quad F_s^t := \int\limits_s^t e^{- 2\pi i\tau  \varOmega} d\tau\, . 
\]
Using the equation for $a$, we see that $\dot{\mathscr{P}}_3(a)$ consists of three monomials of degree $5$, and if we denote on of them by 
$\mathscr{P}_5$, then the integral term consists of three integrals of the type,
\[
\left(\frac{i\lambda^2}{L^{2d}}\right)^2 \int\limits_0^t\mathscr{P}_5(a)(s) e^{- 2\pi is \varOmega} F_s^t\, ds.
\]
Another integration by parts gives the quintic expansion, which consist of three terms of the form
\[
\left(\frac{i\lambda^2}{L^{2d}}\right)^2 \mathscr{P}_5(a)(0) G_0^t + \left(\frac{i\lambda^2}{L^{2d}}\right)^2 \int\limits_0^t\dot{\mathscr{P}}_5(a)(s) G_s^t\, ds, \quad G_s^t = \int\limits_s^t e^{- 2\pi i\tau \varOmega} F_\tau^t\ d\tau\, .
\]
Consequently, to compute the expansion to order $N$ we need to integrate by parts $N$ times on the oscillating exponentials, giving the expansion,
\begin{equation}\label{iteration}
\begin{split}
a_ k(t) = \sum\limits_{n = 0}^N \jm_n(t, k)(\boldsymbol{a}^{(0)}) +R_{N+1}(t,  k)(\boldsymbol{a}^{(t)}),\\
\end{split}
\end{equation}
where $\jm_n = \sum\limits_{\boldsymbol \ell} \jm_{n,\boldsymbol \ell}$, and each $\jm_{n,\boldsymbol \ell}$ is a monomial of degree $2n+1$ generated by the $n^{th}$ integration by parts. The index  $\boldsymbol \ell$ is a vector whose entries keep track  of the history of how  the monomial  $\jm_{n,\boldsymbol \ell}$ was generated.  $R_{N+1}$ is the remaining time integral.

Each $\jm_{n,\boldsymbol \ell}$ can be represented by a tree similar to  Figure \ref{fig:feynman} below.
\begin{figure}[h!]
  \includegraphics[width=0.7\linewidth]{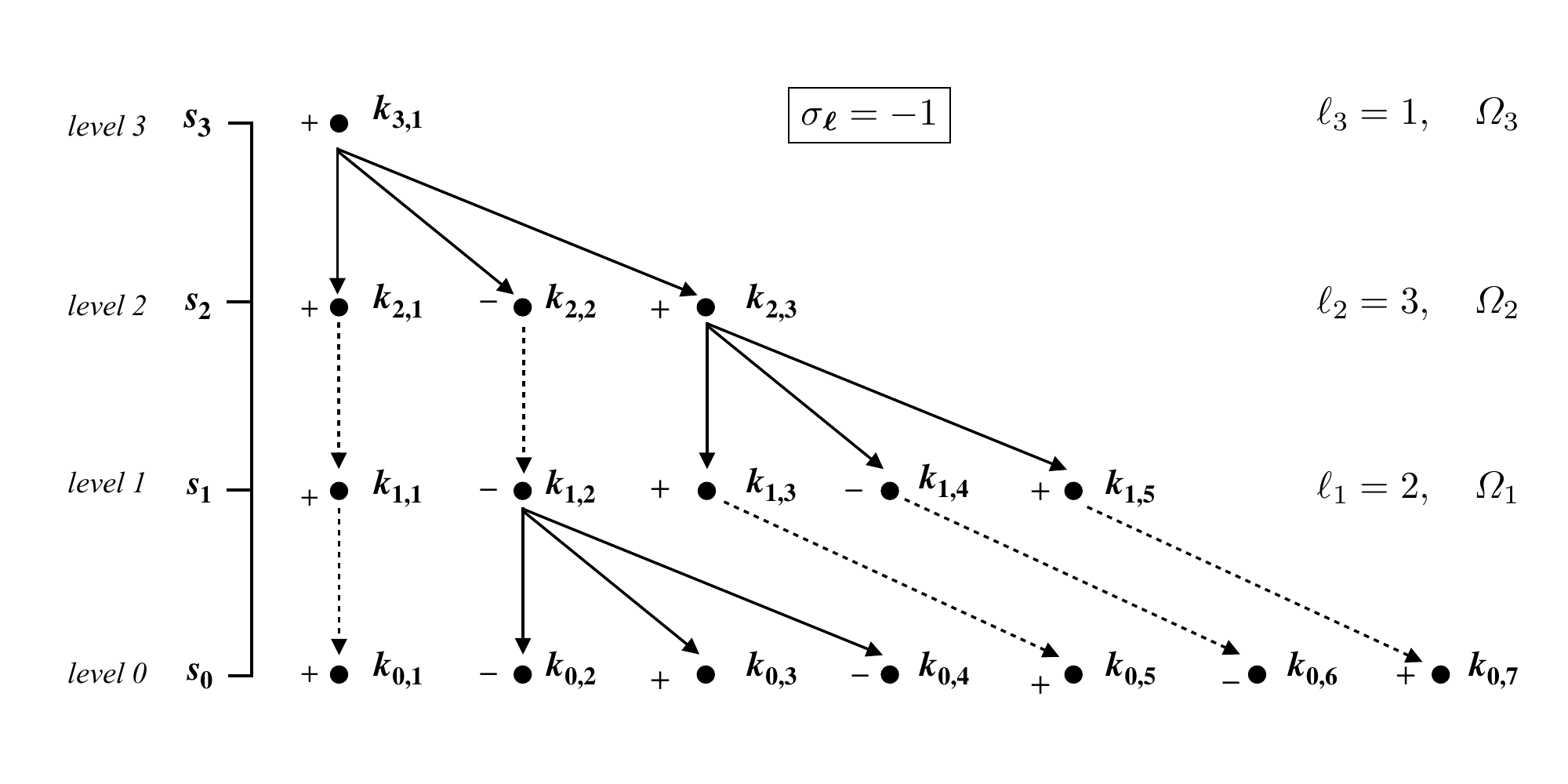}
  \caption{tree of depth $3$.}
  \label{fig:feynman}
\end{figure}
which we now explain. 

The trees will be constructed in reverse order of their usage.  Therefore the labeling of the wave numbers will be done backwards:  $n-j$, $0\le j\le n$.

The tree corresponding to $\jm_{n,\boldsymbol \ell}$, is given as follows. 

 \begin{itemize}[ leftmargin =*]
 
 \item There are $n+1$ levels in the tree, the bottom level is the $0^{th}$ level. Descending from the top to the bottom, each level is generated from the previous level by an integration  by parts step. Thus level  $j$ represents the terms present after $n-j$ integration by parts.
 
 \item   $k_{j,m}$ denote the wave numbers present in level $j$,  and therefore $1\le m \le 2(n-j)+1$. 
 
 \item  $k_{j,m}$ has a parity $\sigma_m $ due to complex conjugation.  For $m$  odd or even, $\sigma_m= +1$ or $\sigma_m= -1$ respectively.
\[
a_{k_{j,m},\sigma_m} =
\begin{cases}
a_{k_{j,m}} & \mbox{if $\sigma_m = +1$} \\[1em]
\overline{a_{k_{j,m}} }& \mbox{if $\sigma_m = -1$}
\end{cases}\,.
\]

\item For each level $j$, we associate a number $\ell_j$, which signals  out  the wave number $ k_{j,\ell_j}$ which has 3 branches.  This is the wave number of the $a$ (or $\bar a$) that was differentiated by the $j^{th}$ integration by parts. The index vector  $\boldsymbol{\ell}$,  keeps track of the integration by parts history in the tree for $\jm_{n,\ell}$. The entries  $\ell_{j}$, $1\le j \le n$,  are given by
 \[
\boldsymbol{\ell}= (\ell_1,\dots, \ell_n) \in \{1,\dots,2n-1\} \times  \{1,\dots,2n-3\} \times \dots \times \{1,2,3\} \times \{1 \}.
\]

\item The tree has a signature $\sigma_{\boldsymbol\ell}= \prod _{j=1}^n (-1)^{\ell_j +1}$.
\item \emph{Transition rules}.  To go from level $j$ to level $j-1$, the wave numbers are related as follows
\begin{equation}
\label{eq:transition}
\begin{cases}
k_{j,m} = k_{j-1,m} & \mbox{for $m <\ell_{j}$} \\
k_{j,m} = k_{j-1,m+2} & \mbox{for $\ell_{j}  <  m$} \\
k_{j,\ell_{j}} = k_{j-1,\ell_{j}} -k_{j-1,\ell_j+1} + k_{j-1,\ell_j +2}
\end{cases}
\end{equation}
Note that for any $j$, $\sum_{m=1}^{2(n-j)+1}(-1)^{m+1} k_{j,m}=  k_{n,1}=k$.  
The wave numbers at level $0$, i.e., those present in $\jm_{n,\boldsymbol \ell}$, are labeled  
\[
 \boldsymbol{k}= ( k_{0,1}, \dots ,k_{0,2n+1}) \in  (\mathbb{Z}^d_L)^{2n+1}\,.
\]   

\item  At each level $j$, the derivative  of the element with wave number  $ k_{j,\ell_j}$ (due to the integration by parts), generates a oscillatory term with frequency
\[
\varOmega_j(\boldsymbol{k})  =  (-1)^{\ell_j +1}\left( Q(k_{j,\ell_j}) - Q(k_{j-1,\ell_j}) + Q(k_{j-1,\ell_j +1}) - Q(k_{j-1,\ell_j +2})\right)\,.
\]

\item We introduce  variables $\boldsymbol{s} = (s_0,\dots,s_n) \in \mathbb{R}_+^{n+1}$;   $t_j(\boldsymbol{s}) = \sum\limits_{k=0}^{j-1} s_k$, $1\leq j \leq n$. 
This choice of variables can be explained as follows. Repeated integration by parts generates terms of the form
\[
\int\limits_0^t g_0(s_0) \int\limits_{s_0}^t g_1(s_1) \dots  \int\limits_{s_{n-2}}^t g_{n-1}(s_{n-1}) =\int\limits_0^t g_0(s_0) \int\limits_{0}^{t-s_0}g_1(s_0 +s_1) \dots \hskip -8mm \int\limits_0^{t- s_0-\dots -s_{n-2}}  \hskip -8mm g_{n-1}(s_0 + \dots + s_{n-1}) 
\]
which can be written as 
\[
\int_{\mathbb{R}_+^{n+1}}g_0(s_0) g_1(s_0 +s_1) \dots g_{n-1}(s_0 + \dots + s_{n-1})  \delta (t-\sum_{l=0}^n s_l)
\]
\end{itemize}

With this notation at hand,  
\[
\begin{split}
&\jm_0=  a^0_k, \quad\,\,  \jm_{1} = \jm_{1,1} =  \eqref{penguin2},\quad\,\,  \jm_2=  \jm_{2,(1,1)} +  \jm_{2,(2,1)} +  \jm_{2,(3,1)},\\
& \jm_{2,(2,1)}= \eqref{penguin4}, \quad    \jm_{2,(1,1)} = \jm_{2,(3,1)} = \frac 12 \eqref{penguin3},
 \end{split}
 \]
and Figure \ref{fig:feynman} represents $\jm_{3,(2,3,1)}$.  The general formula for $\jm_{n, \boldsymbol{\ell}}$  is given by 
\begin{equation}\label{Jnl}
\jm_{n,\boldsymbol{\ell}}(t,\boldsymbol{k}) = \left(\frac{i\lambda^2}{L^{2d}} \right)^{n}  \sigma_{\boldsymbol \ell}\hskip -3mm \sum\limits_{\boldsymbol{k}\in (\mathbb{Z}^d_L)^{2n+1}}  
\hskip -3mm\delta_{k_{n,1}}^k \prod_{j=1}^{2n+1} a^0_{k_{0,j},\sigma_j} \hskip -3mm
  \int\limits_{\mathbb{R}_+^{n+1}} \prod_{m=1}^n e^{-2\pi i t_m(s) \varOmega_m (\boldsymbol{k})}  \delta\left(t-\sum\limits_0^n s_i\right) d\boldsymbol{s}
\end{equation}
Here and throughout the manuscript we write 
\[
\delta_j^k =\begin{cases}1, \quad k=j,\\ 0, \quad k\ne j,\end{cases}
\]
while $\delta(\cdot )$ is the Dirac delta.

Finally, we write
$
R_n(t,  k)({\boldsymbol a})=\sum\limits_{\boldsymbol \ell}\int\limits_0^t  R_{n, \boldsymbol{\ell}}(t, s_0;  k)(\boldsymbol a^{(s_0)}) ds_0, 
$
where
\begin{multline}\label{Rnl}
R_{n, \boldsymbol{\ell}}(t, s_0; k)({\boldsymbol b})=\left(\frac{i\lambda^2}{L^{2d}} \right)^{n} \sigma_{\boldsymbol \ell} \hskip -3mm 
\sum\limits_{\boldsymbol{k}\in (\mathbb{Z}^d_L)^{2n+1}} \hskip -3mm
\delta_{k_{n,1}}^k \prod_{j=1}^{2n+1} b_{k_{0,j},\sigma_j} 
\int\limits_{\mathbb{R}_+^{n}} \prod_{j=1}^n e^{-2\pi i t_j(s) \varOmega_j (\boldsymbol{k})}\times\\ 
 \delta\left(t-s_0-\sum\limits_1^n s_i\right)d\boldsymbol{s}.
\end{multline}

\subsection{Bound on the correlation}

Our aim is to prove the following proposition.

\begin{proposition} If $t < L^{d-\epsilon_0}$, then 
\label{merganser}
\begin{equation}\label{eq:merganser}
\left|\sum_{n+n'=S}\sum_{\ell, \ell'} \mathbb{E} (\jm_{n, \boldsymbol{\ell}}(t,k) \overline{\jm_{n', \boldsymbol{\ell}'}(t,k)}) \right| \lesssim_S( \log t)^2 \left( \frac{t}{\sqrt \tau} \right)^{S} \frac{1}{t}.
\end{equation}
\end{proposition}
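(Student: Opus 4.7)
The strategy is to expand $\mathbb{E}(\jm_{n,\boldsymbol{\ell}}\overline{\jm_{n',\boldsymbol{\ell}'}})$ through the Wick-type combinatorics forced by the i.i.d.\ uniform phases of the data, then bound each resulting term by combining a simplex time-integral estimate with the quasi-resonance bound \eqref{samplesum}, and finally sum over the (finite, for fixed $S$) index set $(n,n',\boldsymbol{\ell},\boldsymbol{\ell}')$.

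\emph{Step 1: Wick-type pairings.} Since $a^0_k = \sqrt{\phi(k)}\, e^{i\vartheta_k}$ with $\vartheta_k$ i.i.d.\ uniform, the product of $2S+2$ initial-data factors coming from the two trees (with the leaves of $\overline{\jm_{n',\boldsymbol{\ell}'}}$ conjugated) has non-vanishing expectation only when the $\pm\vartheta$'s cancel in matched pairs at coinciding wave numbers. A parity count shows these matchings partition the $2S+2$ leaves into exactly $S+1$ pairs, each contributing a factor $\phi(k_{0,j})$. For generic (no internal coincidence) pairings, the two root constraints $k_{n,1}=k=k'_{n',1}$ become equivalent modulo the pairing identifications, leaving $Sd$ free momenta in the constrained lattice sum, all supported in a bounded region of $\mathbb{Z}_L^d$ by the rapid decay of $\phi$.

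\emph{Step 2: dyadic quasi-resonance analysis.} For each generic pairing, the contribution factors as a constrained lattice sum of $\prod \phi$ against a product of two simplex integrals $\int_{\mathbb{R}_+^{n+1}}\prod_m e^{-2\pi i t_m(\boldsymbol{s})\varOmega_m}\,\delta(t-\sum s_i)\,d\boldsymbol{s}$. Repeated integration by parts in the $s$-variables reduces these to a finite sum of terms each of magnitude $\prod_m \min(t, |\varOmega_m^{\mathrm{eff}}|^{-1})$, where the effective frequencies $\varOmega_m^{\mathrm{eff}}$ are partial sums of the $\varOmega_m$'s. I then dyadically split $|\varOmega_m^{\mathrm{eff}}|\sim 2^{-A_m}$ with $0\le A_m\lesssim \log t$; on each block the associated constrained momentum sum is estimated by \eqref{samplesum}, valid in the regime $2^{A_m}\lesssim L^{\nu}=L^d$ supplied by the hypothesis $t<L^{d-\epsilon_0}$. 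Combining the time-integral gain $\min(t,2^{A_m})$ with the equidistribution gain $2^{-A_m}$ and the free-momentum volume $L^{Sd}$, and summing dyadically in the $A_m$, yields the factor $(t/\sqrt{\tau})^S/t$ once multiplied by the prefactor $(\lambda^2/L^{2d})^S$; the $(\log t)^2$ arises from the two saturated scales near the on-shell locus $|\varOmega|\lesssim 1/t$, corresponding to the $\delta(\varOmega)\delta(\varSigma)$ of Section \ref{formalsection}.

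\emph{Main obstacle.} The technical heart of the proof is handling \emph{degenerate} pairings—those in which leaves collide within a single tree, or in which the tree-plus-pairing constraints produce linear dependencies among the $\varOmega_m$—so as to show they contribute only lower-order terms. For each such internal coincidence one loses an independent quasi-resonance constraint, but simultaneously removes a free momentum from the lattice sum, producing a gain $L^{-d}$ that strictly beats the $t<L^{d-\epsilon_0}$ one might pay back via the corresponding time integral. The role of the genericity of $\beta$ is essential here: the sharpness of \eqref{samplesum} at scale $2^A\sim L^d$ depends on the equidistribution parameter $\nu=d$ (proved in Section \ref{sectnumbtheo}), and it is this sharpness that pins the threshold $t<L^{d-\epsilon_0}$. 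Summing over the $O_S(1)$ combinations $(n,n',\boldsymbol{\ell},\boldsymbol{\ell}')$ then produces the $S$-dependent implicit constant, completing \eqref{eq:merganser}.
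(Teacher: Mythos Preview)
Your proposal has a genuine gap in the treatment of degenerate contributions. You claim that internal coincidences ``lose an independent quasi-resonance constraint, but simultaneously remove a free momentum from the lattice sum, producing a gain $L^{-d}$ that strictly beats the $t<L^{d-\epsilon_0}$ one might pay back.'' This is false for the fully degenerate contributions. When every transition in a tree is degenerate (i.e.\ $k_{j,\ell_j}\in\{k_{j-1,\ell_j},k_{j-1,\ell_j+2}\}$ for all $j$), all the $\varOmega_m$ vanish identically, so the simplex integral yields exactly $t^n/n!$ with no oscillatory gain whatsoever, and the corresponding momentum sum still carries $n$ free variables. The resulting term $D_{n,\boldsymbol{\ell}}$ has size $(t/\sqrt{\tau})^n$, and the product $\mathbb{E}(D_{n,\boldsymbol{\ell}}\overline{D_{n',\boldsymbol{\ell}'}})$ is of order $(t/\sqrt{\tau})^S$ --- precisely the trivial bound, with no $1/t$ improvement. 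Your bookkeeping does not cover this case because there is no quasi-resonance constraint left to trade against a momentum.

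The paper resolves this by an \emph{exact algebraic cancellation}: after summing over all $n+n'=S$ and all $\boldsymbol{\ell},\boldsymbol{\ell}'$, the fully degenerate contributions assemble into a multiple of $\sum_{n+n'=S} i^{n-n'}/(n!\,n'!)$, which vanishes by the binomial identity $(1+x)^S|_{x=-1}=0$. This is why the statement is formulated for the sum over $n+n'=S$ rather than for individual $(n,n')$: the bound \eqref{eq:merganser} is simply false term-by-term. Once the $D\overline{D}$ terms are cancelled, one is left with products in which at least one factor has a genuinely non-degenerate transition; the paper then uses a resolvent representation of the simplex integral and exploits a \emph{single} equidistribution bound at the lowest non-degenerate level to extract the $1/t$ gain (your Step~2 overcounts by suggesting one can gain independently from each $\varOmega_m$ --- the lattice variables are shared across constraints, so only one constraint can be cashed in this way). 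The two factors of $\log t$ come from a dyadic sum in one resonance modulus and from an $\alpha$-integral of the form $\int_{-R}^R |\alpha+i/t|^{-1}\,d\alpha$, not from two ``saturated scales'' as you describe.
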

\begin{remark}The trivial estimate would be that
$$
\left| \sum_{n+n'=S}\sum_{\ell, \ell'} \mathbb{E} (\jm_{n, \boldsymbol{\ell}}(t,k) \overline{\jm_{n', \boldsymbol{\ell}'}(t,k)}) \right| \lesssim \left( \frac{t}{\sqrt \tau} \right)^S.
$$
Indeed, $\jm_{n, \boldsymbol{\ell}} \jm_{n', \boldsymbol{\ell}'}$ comes with a prefactor $\left( \frac{\lambda^2}{L^{2d}} \right)^{n+n'}$; the size of the domains where the time integration takes place is $O(t^{n+n'})$; and the summation over $\boldsymbol{k}$ and $\boldsymbol{k'}$ is over $2d(n+n'+1)$ dimensions, half of which are canceled by the pairing (see below), out of which $d$ further dimensions are canceled by the requirement that $k_{n,1} = k$. Overall, this gives a bound $\left( \frac{\lambda^2}{L^{2d}} \right)^{n+n'} \times t^{n+n'} \times L^{d(n+n')} = \left( \frac{t}{\sqrt \tau} \right)^{n+n'}$.

Therefore, the above proposition essentially allows a gain of $\frac{1}{t}$ over the trivial bound.  This gain of $\frac{1}{t}$ comes from cancelations in the ``non degenerate interactions" as will be exhibited by equation \eqref{mallard1}.
\end{remark}

Before we start the proof of Proposition~\ref{merganser}, we shall classify the transitions \eqref{eq:transition} as degenerate  if 
\[
k_{j, \ell_j}\in \{k_{j-1, \ell_{j}}, k_{j-1, \ell_j+2}\}  ,
\]
i.e., if the parallelogram with verticies $(k_{j, \ell_j}, k_{j-1, \ell_{j} -1},   k_{j-1, \ell_j},  k_{j-1, \ell_j+2})$ degenerates into a line.
 In this case $\varOmega_{\ell_j} (\boldsymbol{k})=0$. 
When \emph{all} transitions in a tree that represents
 $\jm_{n, \boldsymbol{\ell}}$ are degenerate we denote the term by $D_{n, \boldsymbol{\ell}}(t,k)$, and if one transition is non degenerate we denote it by $\widetilde\jm_{n,\boldsymbol{\ell}}(t,k) $, that is 
 \begin{equation}\label{treesplitting}
\jm_{n, \boldsymbol{\ell}}(t, k)=\widetilde \jm_{n, \boldsymbol{\ell}}(t, k) +D_{n, \boldsymbol{\ell}}(t, k)
\end{equation}
\begin{align}
D_{n, \boldsymbol{\ell}}(t,k) &= \left(\frac{i\lambda^2}{L^{2d}} \right)^{n}\sigma_{\boldsymbol \ell} \sum_{\boldsymbol{k} \in (\mathbb{Z}^d_L)^{2n+1}} \delta_{k_{n,1}}^k (1-\Delta({\boldsymbol k})) \prod_{j=1}^{2n+1} a^0_{k_{0,j},\sigma_j}    
\int_{\mathbb{R}_+^{n+1}}   \delta\Big(t-\sum_0^n s_i\Big) d\boldsymbol{s}\notag
\\
&= 2^n\frac{t^n}{n!} \left(\frac{i\lambda^2}{L^{2d}} \right)^{n}\sigma_{\boldsymbol \ell} a_k^0\sum_{\boldsymbol{k} \in (\mathbb{Z}^d_L)^{n}}  \prod_{j=1}^{n} |a_{k_j}^0|^2, \label{e:degenerate_decomp}
\end{align}
\begin{multline}\label{eq:nondeg}
\widetilde\jm_{n,\boldsymbol{\ell}}(t,k) = \hskip -2pt  \left(\frac{i\lambda^2}{L^{2d}} \right)^{n}  \sigma_{\boldsymbol\ell} \hskip -3mm \sum\limits_{\boldsymbol{k}\in (\mathbb{Z}^d_L)^{2n+1}}   \hskip -4mm \delta_{k_{n,1}}^k \Delta({\boldsymbol k}) \prod_{j=1}^{2n+1} a^0_{k_{0,j},\sigma_j}  \hskip -4mm  \int\limits_{\mathbb{R}_+^{n+1}} \hskip -1mm  \prod_{m=1}^n e^{-2\pi i t_m(s) \varOmega_m (\boldsymbol{k})}  \delta \Big (t-\sum\limits_0^n s_i\Big) d\boldsymbol{s} ,
\end{multline}
where $\Delta({\boldsymbol k})=1-\prod_{j=1}^n\delta_{\{k_{j-1, \ell_{j}+1}, k_{j-1, \ell_j+1+\sigma_{j, \ell_j}}\}}^{k_{j, \ell_j}}$. Note that  $\Delta({\boldsymbol k})=1$ whenever $\widetilde\jm_{n,\boldsymbol{\ell}}(t,k) \ne 0$. 

\subsection{Cancellation of degenerate interactions}

As can be seen from a simple computation in the formula for $D_{n, \boldsymbol{\ell}}$, the contribution of each $\mathbb E(D_{n, \boldsymbol{\ell}}(t,k)\overline D_{n', \boldsymbol{\ell}'}(t,k))$ to the sum in \eqref{eq:merganser} is of size $\sim \left( \frac{t}{\sqrt{\tau}} \right)^S$, which is too large. Luckily, all those terms cancel out as shows the lemma below.

Note that this cancellation between graph expectations is essentially due to the invariance of the expectation $\mathbb E|a_k|^2$ under Wick renormalization, which is a classical trick in the analysis of the nonlinear Schr\"odinger equation that eliminates all degenerate interactions. However, working at the level of graph expectations might be applicable in more general contexts.

\begin{lemma}\label{cancellation} For any $S\geq 2$
$$\sum_{n+n'=S}\sum_{\boldsymbol \ell,  \boldsymbol \ell'} \mathbb E(D_{n, \boldsymbol{\ell}}(t,k)\overline D_{n', \boldsymbol{\ell}'}(t,k))=0.$$
\end{lemma}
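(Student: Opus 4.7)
The plan is to exploit the fully factorized form of $D_{n,\boldsymbol\ell}$ recorded in \eqref{e:degenerate_decomp}. Because $|a^0_{k'}|^2 = \phi(k')$ is deterministic under the phase randomization of \eqref{random data}, the inner sum factorizes: $\sum_{\boldsymbol k\in (\mathbb Z_L^d)^n}\prod_{j=1}^n |a^0_{k_j}|^2 = M^n$, with $M \defeq \sum_{k'\in \mathbb Z_L^d}\phi(k')$ deterministic. One may therefore write
\begin{equation*}
D_{n,\boldsymbol\ell}(t,k) \;=\; \sigma_{\boldsymbol\ell}\,\frac{c^n}{n!}\,a^0_k, \qquad c \defeq \frac{2it\lambda^2 M}{L^{2d}},
\end{equation*}
with two features to be exploited: the $\boldsymbol\ell$-dependence is entirely carried by the sign $\sigma_{\boldsymbol\ell}$, and $c$ is purely imaginary.

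The next step is to evaluate the sign sum. The label $\boldsymbol\ell$ ranges over the product set $\prod_{j=1}^n \{1,\ldots,2(n-j)+1\}$, and $\sigma_{\boldsymbol\ell} = \prod_{j=1}^n (-1)^{\ell_j+1}$ factorizes accordingly, so $\sum_{\boldsymbol\ell}\sigma_{\boldsymbol\ell}$ splits into a product of alternating signed sums over odd-length integer ranges. Each such factor equals $1$, so $\sum_{\boldsymbol\ell}\sigma_{\boldsymbol\ell}=1$, and hence
\begin{equation*}
\sum_{\boldsymbol\ell} D_{n,\boldsymbol\ell}(t,k) \;=\; \frac{c^n}{n!}\,a^0_k.
\end{equation*}

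Taking expectation and using $\mathbb E|a^0_k|^2 = \phi(k)$, the double sum appearing in the lemma collapses to
\begin{equation*}
\sum_{n+n'=S}\sum_{\boldsymbol\ell,\boldsymbol\ell'}\mathbb E\bigl[D_{n,\boldsymbol\ell}(t,k)\,\overline{D_{n',\boldsymbol\ell'}(t,k)}\bigr]\;=\;\phi(k)\sum_{n+n'=S}\frac{c^{n}\,\overline{c}^{\,n'}}{n!\,n'!}\;=\;\phi(k)\,\frac{(c+\overline c)^S}{S!},
\end{equation*}
by the binomial theorem. Since $c \in i\mathbb R$, one has $c+\overline c = 0$, and the right-hand side vanishes for every $S\geq 1$, which is the claim.

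No serious obstacle is anticipated here. Once the explicit form \eqref{e:degenerate_decomp} is granted, the cancellation is the graph-level incarnation of the Wick-renormalization identity that kills the self-interaction (tadpole) contributions in the cubic nonlinearity: the alternating signs coming from $\sigma_{\boldsymbol\ell}$ combine with complex conjugation to produce the binomial factor $(c+\overline c)^S$, and the imaginary character of $c$ does the rest. The only real care is to keep the combinatorics of the label sums straight and to verify that $M$ is genuinely deterministic under the chosen randomization.
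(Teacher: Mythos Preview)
Your proof is correct and follows essentially the same approach as the paper: both compute $\sum_{\boldsymbol\ell}\sigma_{\boldsymbol\ell}=1$, reduce to the scalar sum $\sum_{n+n'=S}\frac{i^{n-n'}}{n!\,n'!}$ (your $c^n\bar c^{n'}$ with $c\in i\mathbb R$), and recognize it as $\frac{1}{S!}(1+x)^S|_{x=-1}=0$ via the binomial theorem. Your packaging with $c+\bar c=0$ is a bit more streamlined; note also that $|a^0_k|^2=\phi(k)$ is already deterministic, so the expectation is not actually needed.
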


\begin{proof}
First we note that since each level in the tree has parity equal to $1$, then
\[
\sum_{\boldsymbol \ell}\sigma_{\boldsymbol \ell} =\prod_{j=1}^n (\mbox{parity of line $j$)}= (1)^n =1\,.
\]
Hence by equation  \eqref{e:degenerate_decomp} 
\[
\sum_{\boldsymbol \ell} D_{n, \boldsymbol{\ell}}(t, k)=2^n\frac{t^n}{n!} \left(\frac{i\lambda^2}{L^{2d}} \right)^{n} \left(\sum_{\boldsymbol{k} \in (\mathbb{Z}^d_L)^{n}}  \prod_{j=1}^{n} |a_{k_j}^0|^2\right) a_k^0\,.
\]
Thus we obtain
\[
\sum_{n+n'=S}\sum_{\boldsymbol  \ell, \boldsymbol \ell'} \mathbb E(D_{n, \boldsymbol{\ell}}(t,k)\overline D_{n', \boldsymbol{\ell}'}(t,k))=2^{S} t^S\left(\frac{\lambda^2}{L^{2d}} \right)^{S}\left(\sum_{\boldsymbol{k} \in (\mathbb{Z}^d_L)^{S}}  \prod_{j=1}^{S} |a_{k_j}^0|^2\right) |a_k^0|^2 \left(\sum_{n+n'=S}\frac{i^{n-n'}}{n! n'!}\right).
\]

The result will follow once we show that 
\begin{equation*}
\sum_{n+n'=S}\frac{i^{n-n'}}{n! n'!}=0.
\end{equation*}

This follows by parametrizing the above sum as $\{(n, n')=(S-j, j): j=0, \ldots S\}$, which gives
\begin{equation*}
\sum_{n+n'=S}\frac{i^{n-n'}}{n! n'!}= i^S \sum_{j=0}^{S} \frac{(-1)^{j}}{(S-j)!j!}=\frac{i^S}{S!} \sum_{j=0}^{S} \frac{(-1)^{j}S!}{(S-j)!j!} =\frac{i^S}{S!}(1+x)^{S}\big|_{x=-1}=0.
\end{equation*}
\end{proof}

\subsection{Estimate on non-degenerate interactions}

Proposition \ref{merganser} now follows from the following lemma:
\begin{lemma} \label{nondeg trees} 
Suppose $G_{n', \boldsymbol{\ell}'}(t,k)\in \{\widetilde \jm_{n', \boldsymbol{\ell}'}(t,k)), D_{n', \boldsymbol{\ell}'}(t,k))\}$, then for $0<t< L^{d-\epsilon_0}$, 
$$
\left|\mathbb{E} (\widetilde \jm_{n, \boldsymbol{\ell}}(t,k) \overline{G_{n', \boldsymbol{\ell}'}(t,k)}) \right| \lesssim_n (\log t)^2 \left( \frac{t}{\sqrt \tau} \right)^{n+n'} \frac{1}{t}.
$$
\end{lemma}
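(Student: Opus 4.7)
My plan is to evaluate the expectation by the Wick (parity) pairing of phases from Section~\ref{formalsection}, bound the resulting iterated time integrals in terms of the quasi-resonance denominators $|\varOmega_m|$, and finally apply the number-theoretic estimate \eqref{samplesum} to extract a gain of $1/t$ over the trivial bound.

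After pairing, a monomial in the $a^0_{k,\sigma}$'s has nonzero expectation only if every positive-parity factor pairs bijectively with a negative-parity one at an equal wave number, producing a $\phi$-factor per pair. Accounting for these constraints together with the momentum conditions $k_{n,1}=k'_{n',1}=k$ leaves a $d(n+n')$-dimensional wave-number summation. The joint time integration factors into two simplex integrals $\int_{0\le t_1\le\ldots\le t_n\le t}\prod_{m=1}^n e^{-2\pi i t_m\varOmega_m}\,d\boldsymbol t$ (the second collapses to $t^{n'}/n'!$ when $G=D_{n',\boldsymbol\ell'}$), each bounded by $\prod_m\min(t,|\varOmega_m|^{-1})$ via iterated integration by parts. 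Since $\widetilde\jm_{n,\boldsymbol\ell}$ carries at least one non-degenerate level $j_0$ with $\varOmega_{j_0}\ne 0$, the corresponding factor beats the trivial $t$.

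Summing dyadically over $|\varOmega_{j_0}|\sim 2^{-A}$ for $1\lesssim 2^A\lesssim t$, the count \eqref{samplesum} (with $r$ equal to the number of free variables entering $\varOmega_{j_0}$) gives $\lesssim 2^{-A}L^{dr}$ per slice, so that each dyadic contribution is $2^A\cdot 2^{-A}L^{dr}=L^{dr}$ and summing over the $\lesssim\log t$ relevant scales yields $L^{dr}\log t$ in place of the trivial $tL^{dr}$. Combined with trivial bounds on the remaining $n+n'-1$ transitions and the prefactor $(\lambda^2/L^{2d})^{n+n'}$, this produces $(\log t)^2(t/\sqrt\tau)^{n+n'}/t$ (the second $\log t$ arises either from the endpoint region $|\varOmega_{j_0}|\lesssim 1/t$ or from the analogous bookkeeping on the $G$-side when $G=\widetilde\jm_{n',\boldsymbol\ell'}$). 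The hypothesis $t<L^{d-\epsilon_0}$ ensures all dyadic scales $2^{-A}\geq 1/t$ lie within the validity range $L^{-\nu^-}=L^{-d^-}$ of \eqref{samplesum}.

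The principal obstacle is combinatorial: regardless of the tree labels $\boldsymbol\ell,\boldsymbol\ell'$ and the choice of Wick pairing, one must verify that $\varOmega_{j_0}$ depends non-trivially on a full $d$-dimensional set of summation variables so that \eqref{samplesum} applies with the correct exponent. This is controlled by the indicator $\Delta(\boldsymbol k)=1$ at level $j_0$ together with a careful analysis of how Wick pairings interact with the transition rules \eqref{eq:transition}: any pairing that would artificially force $\varOmega_{j_0}=0$ on the constrained manifold must be reorganized or absorbed into the degenerate subcase already treated by Lemma~\ref{cancellation}.
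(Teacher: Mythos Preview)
Your high-level strategy---pair, locate a non-degenerate transition, and use the equidistribution count to trade a factor of $t$ for $\log t$---is the right one, and matches the paper's. But two steps are not yet under control.

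First, the bound $\prod_m\min(t,|\varOmega_m|^{-1})$ for the simplex integral is not correct as stated. Integrating $\int_{0\le t_1\le\cdots\le t_n\le t}\prod_m e^{-2\pi i t_m\varOmega_m}\,d\boldsymbol t$ iteratively produces factors involving \emph{partial sums} $\varOmega_m+\varOmega_{m+1}+\cdots+\varOmega_n$, not the individual $\varOmega_m$; the oscillation at level $m$ couples to all higher levels through the ordering constraint. The paper handles this by writing $\delta(t-\sum s_j)=\frac{1}{2\pi}\int e^{-i\alpha(t-\sum s_j)}\,d\alpha$ and obtaining a resolvent representation with denominators $(\alpha-\sum_{l\ge m}\varOmega_l+\tfrac{i}{t})$. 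The $\alpha'$-integral for the $G$-side is then bounded trivially by $t^{n'}$ via H\"older (this is where the $G=D$ and $G=\widetilde\jm$ cases are treated uniformly), and the \emph{two} $\log t$ factors both come from the $\widetilde\jm_{n,\boldsymbol\ell}$ side: one from the dyadic sum over the size of the partial sum $\sum_{l\ge p+1}\varOmega_l$, and one from $\int_{-R}^R\frac{d\alpha}{|\alpha+i/t|}$.

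Second, and more seriously, the ``principal obstacle'' you flag in your last paragraph is in fact the crux of the argument, and the paper resolves it by an explicit case analysis that your plan does not supply. Having fixed the lowest non-degenerate level $p$ with children $I_1,I_2,I_3$, one must track where their Wick partners $J_1,J_2,J_3$ land. If all three lie in the $G$-tree (Case~1), the three variables $k_{p,I_1},k_{p,I_2},k_{p,I_3}$ are genuinely free and the count applies directly. If only one $J_i$ stays in the $\widetilde\jm$-tree (Case~2), a parity argument produces a fourth index $I_4$ whose partner is in the $G$-tree, and one substitutes $I_4$ for the lost variable. The delicate case is when \emph{two} of $J_1,J_2,J_3$ lie in the same tree (Case~3): here the paired variables $k_{p,I_1}=k_{p,J_1}$, $k_{p,I_3}=k_{p,J_3}$ cancel in the momentum constraint $\sum_j\sigma_{p,j}k_{p,j}=k$, which forces $p\le n-2$ and lets one drop to the denominator $(\alpha-\sum_{l\ge p+2}\varOmega_l+\tfrac{i}{t})$, which still depends quadratically on $k_{p,I_1},k_{p,I_3}$ so that the equidistribution bound applies. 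Without this reorganization the counting argument fails: you cannot simply ``absorb'' Case~3 into Lemma~\ref{cancellation}, since that lemma only cancels fully degenerate trees $D_{n,\boldsymbol\ell}$, not non-degenerate trees whose pairing happens to be internal.
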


\begin{proof}
We will only consider the case of $G_{n', \boldsymbol{\ell}'}(t,k)=\widetilde \jm_{n', \boldsymbol{\ell}'}(t,k)$, since the case $G_{n', \boldsymbol{\ell}'}(t,k)=D_{n', \boldsymbol{\ell}'}(t,k))$  is  easier to bound. Using the identity
\[
\delta \left(t - \sum_0^n s_j\right) = {\frac{1}{2\pi}}\int e^{-i\alpha (t - \sum_{j=0}^n s_j)}\,d\alpha\,,
\]
we can write for any $(e_0,\dots,e_n) \in \mathbb{R}^{n+1}$ and $\eta>0$
\begin{align}
\int_{\mathbb{R}_+^{n+1}} \prod_{j=0}^n e^{-i s_j e_j}  \delta\left(t-\sum_0^n s_i\right) d\boldsymbol{s} 
&=\frac{e^{\eta t}}{ 2\pi} \int_{\mathbb{R}} e^{-i \alpha t}  \prod_{j=0}^n \frac{i}{ \alpha -e_j + i\eta}\,d\alpha\,.
\end{align}
Thus by choosing  $\eta = \frac{1}{t}$, we have
\begin{multline*}
\widetilde \jm_{n, \boldsymbol{\ell}}(t,k) =  {  \frac{ie}{ 2\pi}} \left(- \frac{\lambda^2}{L^{2d}} \right)^{n} \sigma_{\boldsymbol\ell}\sum_{\boldsymbol{k} \in (\mathbb{Z}^d_L)^{2n+1}} \delta_{k_{n,1}}^k\Delta({\boldsymbol k}) \prod_{j=1}^{2n+1} a^0_{k_{0,j},\sigma_j}\times \\
\int \frac{e^{-i\alpha t} d\alpha}{(\alpha-\varOmega_1 - \varOmega_2 -  \dots - \varOmega_{n} + \frac{i}{t}) \dots (\alpha - \varOmega_{n} + \frac{i}{t})(\alpha + \frac{i}{t})}.
\end{multline*}
Here we employed the notation  $\varOmega_j = 2\pi \varOmega_{j}(\boldsymbol{k})$.

To bound $\mathbb{E} (\widetilde \jm_{n, \boldsymbol{\ell}}(t,k) \overline{\widetilde \jm_{n', \boldsymbol{\ell}'}(t,k)})$, we will simplify the notation by setting $k_{0,2n+1+j}= k'_{0,j}$, which also preserves the parity convention. 
Consequently, we have $2n +2n' +2$ wave numbers present in the expression for 
$\mathbb{E} (\widetilde \jm_{n, \boldsymbol{\ell}}(t,k) \overline{\widetilde \jm_{n', \boldsymbol{\ell}'}(t,k)})$.
\begin{figure}[h!]
  \includegraphics[width=0.9\linewidth]{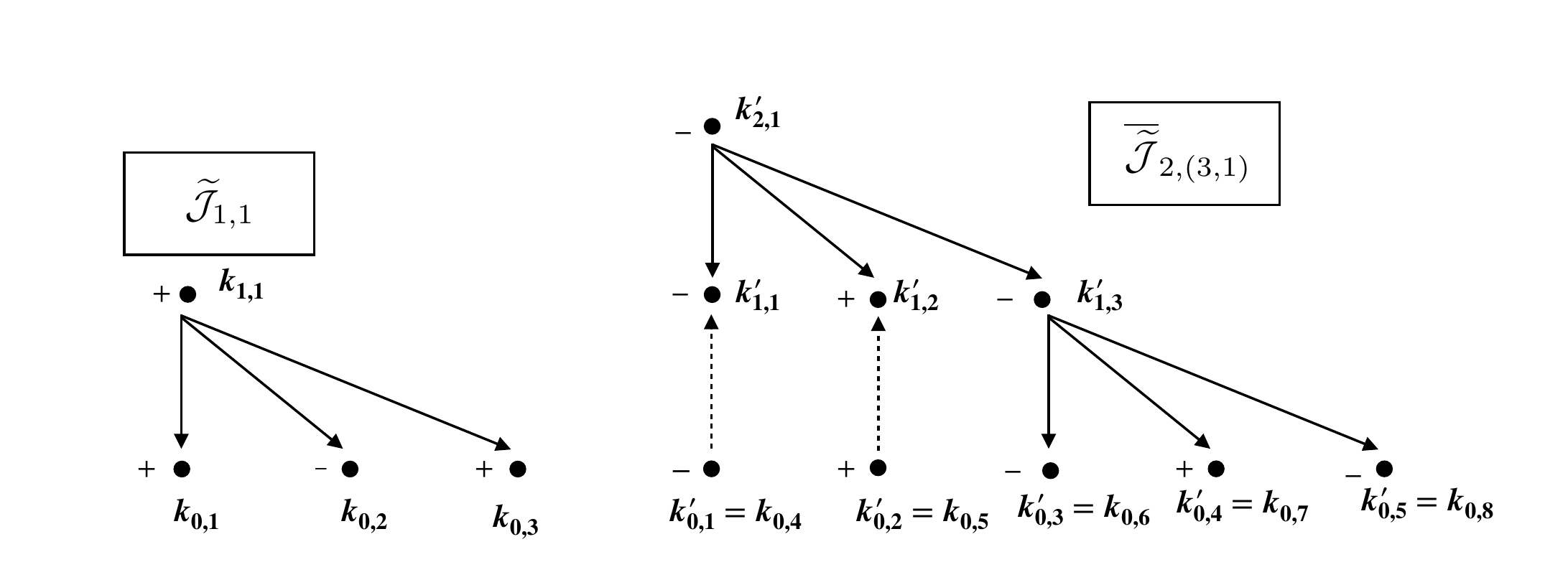}
  \caption{Relabeling  trees.}
  \label{fig:feynman-produce}
\end{figure}

Next, since the phases are i.i.d.\ with mean $0$, then only specific paring of the wave numbers contribute nonzero terms, namely the paring should be between terms with the same wave number and opposite parity. For this reason we  introduce $\mathcal{P} = \mathcal{P}(n,n',\boldsymbol{\sigma},\boldsymbol{\sigma}')$ a class of pairings indices and parities, as illustrated in Figure \ref{pairing}
\[
\mathcal{P}\ni \psi: \{1,\dots, 2n+2n'+2\} \to  \{1,\dots, 2n+2n'+2\} \Leftrightarrow \psi(j)=l \Rightarrow 
 \psi(l)=j, \quad \mbox{and }\, \sigma_{\psi(j)}= - \sigma_j
\]

Furthermore,  we define the pairing  of wave numbers induced by $\psi$, 
$\varGamma_\psi(\boldsymbol{k},\boldsymbol{k}') = \prod_{j=1}^{2n+2n'+2} \delta^{k_{0,j}}_{ k_{0,\psi(j)}}.
$ By the independence of the phases $\vartheta_{k_{0,j}}(\omega)$, we have,
\[
\left| \mathbb{E}_\omega \left[ \prod_{j=1}^{2n+1} e^{i \sigma_{0,j} \vartheta_{k_{0,j}}(\omega)} \prod_{j'=1}^{2n'+1} e^{- i \sigma_{0,j'}  \vartheta_{k_{0,j}}(\omega)} \right]  \right| \lesssim \sum_{\psi \in \mathcal{P}} \varGamma_\psi(\boldsymbol{k}\,.\boldsymbol{k}'),
\]
Hence we obtain
\begin{multline*}
 \left|\mathbb{E} (\widetilde \jm_{n, \boldsymbol{\ell}}(t,k) \overline{\widetilde \jm_{n', \boldsymbol{\ell}'}(t,k)}) \right|
 \lesssim \left( \frac{\lambda^2}{L^{2d}} \right)^{n+n'} \sum_{\psi \in \mathcal{P}} \sum_{\substack{\boldsymbol{k} \in (\mathbb{Z}^d_L)^{2n+1} \\ \boldsymbol{k}' \in (\mathbb{Z}^d_L)^{2n'+1}}} \mathscr{A}_{\psi}(\boldsymbol{k},\boldsymbol{k}')\times\\
 \bigg| \int \frac{e^{-i\alpha t} d\alpha}{(\alpha-\varOmega_1 - \dots - \varOmega_{n} + \frac{i}{t}) \dots (\alpha - \varOmega_{n} + \frac{i}{t})(\alpha + \frac{i}{t})} \times \\
 \int \frac{e^{i\alpha' t} d\alpha'}{(\alpha'-\varOmega_1' - \dots - \varOmega_{n'}' + \frac{i}{t}) \dots (\alpha'- \varOmega_{n'}' + \frac{i}{t})(\alpha' + \frac{i}{t})} \bigg|.
\end{multline*}
where  $\mathscr{A}_{\psi}(\boldsymbol{k},\boldsymbol{k}')=\delta^{k_{n,1}}_k\delta_{ k'_{n',1}}^k\Delta({\boldsymbol k})\Delta({\boldsymbol k'}) \varGamma_\psi(\boldsymbol{k},\boldsymbol{k}') 
\prod_{j=1}^{2n+1} \sqrt{\phi(k_{0,j})} \prod_{j'=1}^{2n'+1} \sqrt{\phi(k_{0,j'}')} $.

%
%

By H\"older's inequality, for any $m \geq 1$ and $b_1,\dots,b_{n'+1} \in \mathbb{R}$,
\begin{equation}
\label{boundintalpha}
\int_{\mathbb{R}} \frac{d\alpha'}{|\alpha'-b_1 + \frac{i}{t}| \dots |\alpha'-b_{m+1}  + \frac{i}{t}|} \lesssim t^{m}\, ,
\end{equation}
and applying this bound to the $\alpha'$ integral yields
\begin{multline*}
 \left| \mathbb{E} (\widetilde \jm_{n, \boldsymbol{\ell}}(t,k) \overline{\widetilde \jm_{n', \boldsymbol{\ell}'}(t,k)}) \right| \lesssim  t^{n'} \left( \frac{\lambda^2}{L^{2d}} \right)^{n+n'}\sum_{\psi \in \mathcal{P}} \sum_{\substack{\boldsymbol{k} \in (\mathbb{Z}^d_L)^{2n+1} \\ \boldsymbol{k}' \in (\mathbb{Z}^d_L)^{2n'+1}}}\mathscr{A}_{\psi}(\boldsymbol{k},\boldsymbol{k}') \times\\
\left| \int \frac{d\alpha}{(\alpha-\sum^{n}_{l=1}\varOmega_l+ \frac{i}{t}) \dots (\alpha -\varOmega_{n} + \frac{i}{t})(\alpha + \frac{i}{t})} \right| \,.
\end{multline*}

Let $p=p(\boldsymbol{k})$ be the smallest integer such that $k_{p+1, \ell_p}\notin \{k_{p, \ell_{p}+1}, k_{p, \ell_p+1+\sigma_{p+1, \ell_p}}\}$, i.e., in the tree for $\widetilde \jm_{n, \boldsymbol{\ell}}$ the transition from level $p+1$ to level $p$  is not degenerate. Note that  $0 \leq p \leq n-1$, and
\begin{multline*}
 \left| \mathbb{E} (\widetilde \jm_{n, \boldsymbol{\ell}}(t,k) \overline{\widetilde \jm_{n', \boldsymbol{\ell}'}(t,k)}) \right| \lesssim  t^{n'} \left( \frac{\lambda^2}{L^{2d}} \right)^{n+n'}\sum_{\psi \in \mathcal{P}} \sum_{\substack{\boldsymbol{k} \in (\mathbb{Z}^d_L)^{2n+1} \\ \boldsymbol{k}' \in (\mathbb{Z}^d_L)^{2n'+1}}}\mathscr{A}_{\psi}(\boldsymbol{k},\boldsymbol{k}') \times\\
 \left| \int \frac{d\alpha}
 {(\alpha-\sum^{n}_{l=p+ 1}\varOmega_l + \frac{i}{t})^{p+1} \dots (\alpha -\varOmega_{n} + \frac{i}{t})(\alpha + \frac{i}{t})} \right| 
 =: \sum_\psi \sum_p  \mathfrak{I}_{p,\psi}\,.
\end{multline*}

We now set
\begin{alignat*}{4}
&I_1 = \ell_p, \hskip 15mm && I_2 = \ell_p + 1, \hskip 15mm && I_3 = \ell_p +2, \hskip 15mm&& \boldsymbol{k_p}= (k_{p,I_1}, k_{p,I_2}, k_{p,I_3}),\\
&J_1 = \psi(I_1),  && J_2 = \psi(I_2),  && J_3 = \psi(I_3)\,. && \phantom{a}
\end{alignat*}
 Note that, by definition of $p$,
$$
\{ I_1,I_2,I_3 \} \cap \{ J_1,J_2,J_3 \} = \emptyset.
$$
The figure below illustrate all the introduced notations and parings for the product of two non degenerate terms.
\begin{figure}[h!]
  \includegraphics[width=0.9\linewidth]{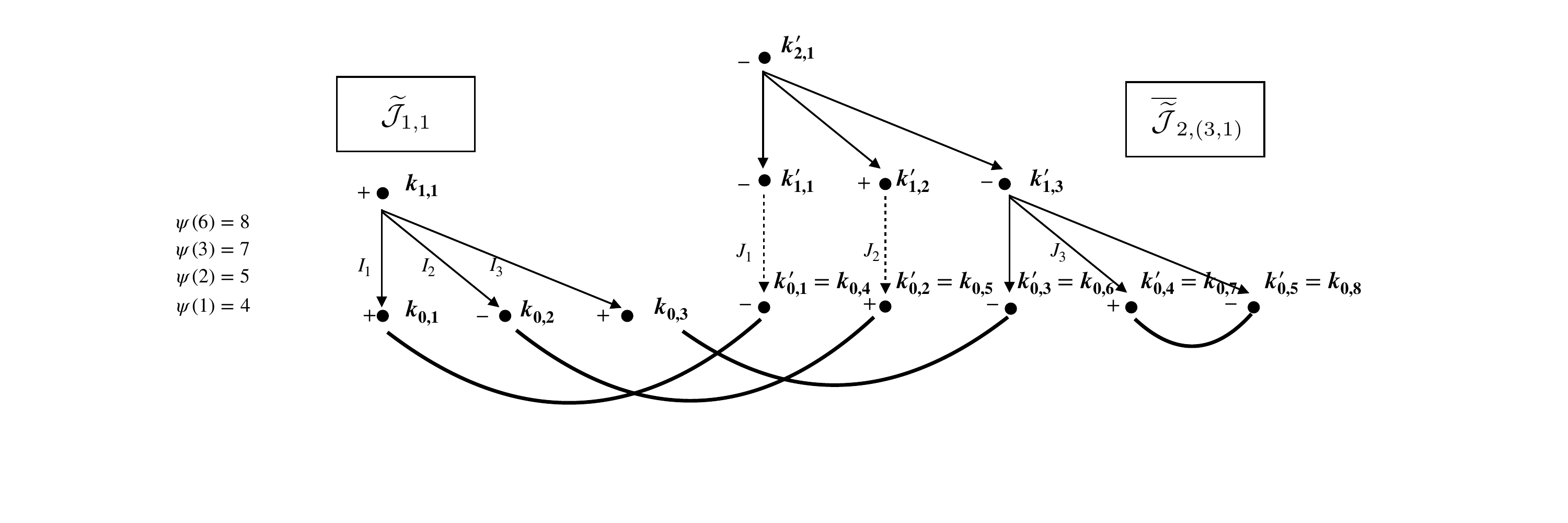}
  \caption{Pairing trees.}
  \label{pairing}
\end{figure}

We  distinguish three cases depending on the values of the numbers $J_i$.

\underline{ Case 1:  $J_1,J_2,J_3 \geq 2n+2$.} 
For a fixed $p$ and $\psi$ we sum over all wave numbers in $  \mathfrak{I}_{p,\psi}$ that yield degenerate transitions, i.e., wave numbers generated in rows $0\le l \le p-1$.  This contributes $L^{dp}$ to the bound,
\[
\mathfrak{I}_{p,\psi} \lesssim  t^{n'} L^{dp} \left( \frac{\lambda^2}{L^{2d}} \right)^{n+n'} \sum\nolimits^* \sum_{\boldsymbol{k_p}}
\mathscr{A}_{\psi}(\boldsymbol{k},\boldsymbol{k}')
\int \frac{d\alpha}{|( \alpha-\sum^{n}_{l=p+ 1}\varOmega_l + \frac{i}{t})^{p+1} \dots ( \alpha-\varOmega_{n}  + \frac{i}{t})(\alpha + \frac{i}{t})|} ,
\]
where $\sum^*$ stands for the sum over $k_{p,j}$, where $1 \leq j \leq 2(n-p)  + 1$ and $j \notin \{ I_1,I_2,I_3 \}$, and $k'_{0, j'}$ for $1\leq j'\leq 2n'+1$ with $j' \notin \{J_1, J_2,J_3\}$.

The contribution of the above integral is acceptable as long as the denominator is $O(\langle \alpha \rangle^{-2})$. Therefore, it suffices to prove the desired bound when the domain of integration reduces to $\alpha \in [-R,R]$, for some $R>0$, since  the resonance moduli $\varOmega_i$ are bounded.  Furthermore by bounding the integrand by $\frac{t^{n-1}}{|\alpha-\varOmega_{p+1} - \dots - \varOmega_{n}  + \frac{i}{t}| |\alpha + \frac{i}{t}|}$, matters reduce to estimating
\begin{equation}\label{eq:bound4}
t^{n'+n-1} L^{dp} \left( \frac{\lambda^2}{L^{2d}} \right)^{n+n'} \sum\nolimits^* 
 \mathscr{A}_{\psi}(\boldsymbol{k},\boldsymbol{k}')
\sum_{\boldsymbol{k_p}}
 \int_{-R}^R \frac{d\alpha}{|\alpha-\sum^{n}_{l=p+ 1}\varOmega_l    + \frac{i}{t}| |\alpha + \frac{i}{t}|}\, .
\end{equation}
By the identity $ k_{p+1,I_1} - k_{p,I_2} = \sigma_{p,I_1} (k_{p,I_3} - k_{p,I_1})$, this can also be written
\[
t^{n'+n-1} L^{dp} \left( \frac{\lambda^2}{L^{2d}} \right)^{n+n'} \sum\nolimits^* 
 \mathscr{A}_{\psi}(\boldsymbol{k},\boldsymbol{k}')
 \int_{-R}^R  \left[ \sum_{\boldsymbol{k_p}}
 \delta^{ k_{p+1,I_1}} _{ k_{p,I_2} + \sigma_{p,I_1} (k_{p,I_3} - k_{p,I_1})} 
  \frac{1} {| \alpha-\sum^{n}_{l=p+ 1}\varOmega_l    + \frac{i}{t}|} \right] \frac{d\alpha}{ |\alpha + \frac{i}{t}|},
\]
and since $\sum_{j=1}^{2(n-p)+1}\sigma_{p, j}k_{p, j}=k_{n,1}=k$,  we note that
\begin{multline}\label{eq:q4}
\varOmega_{p+1}+\ldots+\varOmega_n  = Q(k) - \sum_{j = 1}^{2(n-p) + 1} \sigma_{p,j} Q(k_{p,j})=\\
 -\sigma_{p, I_1}Q(k_{p, I_1})-\sigma_{p, I_3} Q(k_{p, I_3})-\sigma_{p, I_2}Q(k_{p+1,I_1} - \sigma_{p,I_1} (k_{p,I_3} - k_{p,I_1}))+C
\end{multline}
where $C$ depends only on $k$ and the variables $k_{p, j}$ with $j\notin \{I_1, I_2, I_3\}$. 

By setting $P= k_{p,I_3}$ and $R=k_{p,I_1}$,  for $t < L^{\nu}$  we bound
\[
\sum_{\substack{P,R \in \mathbb{Z}^d_L\\ |P|, |R|\le 1}}  \frac{1}{\left| -Q(P) + Q(R) - Q(N+P-R) + C + \frac{i}{t}\right|},
\]
using the  equidistribution result in Section \ref{sectnumbtheo}. If $| -Q(P) + Q(R) - Q(N+P-R) + C| \le t^{-1}$, we have by Corollary \ref{l:lossy_count}
\[
\sum_{\substack{P,R \in \mathbb{Z}^d_L\\ |P|, |R|\le 1}}  \frac{1}{\left| -Q(P) + Q(R) - Q(N+P-R) + C + \frac{i}{t}\right|}\lesssim t \Big(L^{2d }\frac 1t + L^{d }\Big)\,.
\]
Whereas for $| -Q(P) + Q(R) - Q(N+P-R) + C| \ge t^{-1}$, we bound 
\begin{multline}
\label{mallard1}
\sum_{\substack{P,R \in \mathbb{Z}^d_L\\ |P|, |R|\le 1}}  \frac{1}{\left| -Q(P) + Q(R) - Q(N+P-R) + C + \frac{i}{t}\right|}  \lesssim \\
\sum_{\frac{1}{t} < 2^n \lesssim 1} 2^{-n} \sum_{|-Q(P) + Q(R) - Q(N+Q-R) + C| \sim 2^n} 1 \lesssim L^{2d} \sum_{\frac{1}{t} < 2^n \lesssim 1} 1 \lesssim L^{2d} \log t.
\end{multline}
Therefore, we can bound \eqref{eq:bound4} by
\begin{align*}
t^{n'+n-1} L^{dp}\Big( L^{2d }\log t + t L^{d }\Big) \left( \frac{\lambda^2}{L^{2d}} \right)^{n+n'} \sum\nolimits^* \varGamma_\psi(\boldsymbol{k},\boldsymbol{k}')  \int_{-R}^R  \frac{d\alpha}{ |\alpha + \frac{i}{t}|},
\end{align*}

The sum $\sum^*$ is over $2(n+n'-p-2)$ variables; however, because of  the pairing $\varGamma_\psi$, half of them drop out, so that the remaining sum is $\lesssim L^{d(n+n'-p-2)}$.  
Ans since $\int_{-R}^R \frac{d\alpha}{ |\alpha + \frac{i}{t}|} \lesssim \log t$, the above expression can be bounded by, 
\[
 t^{n'+n-1}\Big((\log t)^2 L^{d(n+n')} + t(\log t)  L^{d(n+n' -1)}\Big)\left( \frac{\lambda^2}{L^{2d}} \right)^{n+n'},
\]
which gives the stated bound.

\underline{Case 2: only two of $J_1,J_2,J_3$ are $\geq 2n+2$.} Suppose for instance that $J_2\leq 2n+1$. Then, there exists $I_4\leq 2(n-p)+1$ such that $\psi(I_4) = J_4 \geq 2n+2$ (such an index exists because there is an odd number of elements in the set of elements in $\{1, \ldots, 2(n-p)+1\}\setminus \{I_1, I_2, I_3, J_2\}$, so they cannot be paired together completely). One can then follow the above argument replacing $I_2$ by $I_4$.

 \underline{Case 3: two of $J_1,J_2,J_3$ are  $\leq 2n+1$} Assume for instance that $J_1, J_3 \leq 2n+1$. Proceeding as in Case 1, it suffices to bound
 \[
  t^{n'} L^{dp} \left( \frac{\lambda^2}{L^{2d}} \right)^{n+n'} \sum\nolimits^* \sum_{k_{p,I_1},k_{p,I_3}}
\mathscr{A}_{\psi}(\boldsymbol{k},\boldsymbol{k}')
\int \frac{d\alpha}{|( \alpha-\sum^{n}_{l=p+ 1}\varOmega_l + \frac{i}{t})^{p+1} \dots ( \alpha-\varOmega_{n}  + \frac{i}{t})(\alpha + \frac{i}{t})|} ,
\]
where $\Sigma^*$ is the sum over $k_{p,j}$, with $j \in \{1,\dots,2(n-p)+1\} \setminus \{I_1,I_3,J_1,J_3\}$, and over $k_{0,j'}$, with $j' \in \{1,\dots,2n'+1\}$.

A crucial observation is that, since $\sum_{j=1}^{2(n-p)+1}\sigma_{p, j}k_{p, j}=k_{n,1}=k$, the wave numbers  $k_{p,I_1}$ and $k_{p,I_3}$ do not contribute to this sum since the paring $k_{p,I_1} = k_{p,J_1}$ and $k_{p,I_3} = k_{p,J_3}$, causes them to cancel one another. Furthermore, $0\le p \leq n-2$ since $J_1,J_3 \leq 2n+1$, and therefore we bound the integrand by $\frac{t^{n-1}}{| \alpha-\varOmega_{p+2} - \dots - \varOmega_{n} + \frac{i}{t}||\alpha + \frac{i}{t}|}$. Overall, we can bound the above by
\[
L^{dp} t^{n+n'-1} \left( \frac{\lambda^2}{L^{2d}} \right)^{n+n'} \sum\nolimits^*\mathscr{A}_{\psi}(\boldsymbol{k},\boldsymbol{k}')
 \int_{-R}^R \left( \sum_{k_{p,I_1},k_{p,I_3}} \frac{1}{| \alpha-\varOmega_{p+2} - \dots - \varOmega_{n} + \frac{i}{t}|} \right) \frac{d\alpha}{|\alpha + \frac{i}{t}|}.
\]

From equation \eqref{eq:q4}, we conclude
\[
\sum^{n}_{l=p+ 2}\varOmega_l =  -\sigma_{p, I_1}Q(k_{p, I_1})-\sigma_{p, I_3} Q(k_{p, I_3})-\sigma_{p, I_2}Q(k_{p+1,I_1} - \sigma_{p,I_1} (k_{p,I_3} - k_{p,I_1}))+C
\]
where $C$ only depends on the variables in $\sum^*$. Applying~\eqref{mallard1} enables us to bound the inner sum by  $L^{2d}  \log t$, and the $\alpha$ integral by $\log t$. Finally, the number of variables in $\sum^*$ is $2(n+n'-p-1)$. By  pairing them  there are only $n+n'-p-1$, and  fixing   $k_{n,1}=k$ brings their number down to $n+n'-p-2$. Thus  $\sum^*$ will contribute $\lesssim L^{d(n+n'-p-2)}$. Overall, we obtain the bound
$$
\lesssim (\log t)^2 t^{n'+n-1} L^{d(n+n')}  \left( \frac{\lambda^2}{L^{2d}} \right)^{n+n'},
$$
which is the desired estimate.
\end{proof}

\section{Deterministic local well-posedness}\label{LWP section}

Local or long time existence existence of smooth solutions is usually  carried out by using Strichartz estimates to bound solutions.  The known Strichartz estimates for our problem \eqref{impStrich} are not sufficient to allow us to prove existence of solutions for a long time interval where the wave kinetic equation \eqref{KW} emerges.  However, if the data is assumed to be random,  then one has improved estimates due to Khinchin's inequality ~\cite{BT}.  Based on this, we first present a local well-posedness theorem provided the data satisfies a certain estimate.  In Section \ref{KhinchineSection}, we show that such an improved  estimate occurs with high probability.

Moreover, to use the results from Sections \ref{feynman} and   \ref{sectnumbtheo}, we will restrict discussion  to  the case $T < L^{d-\epsilon_0}$.

\subsection{Strichartz estimate}   Recall  equation  \eqref{impStrich} , which can be written as,
\[
\|e^{it\Delta_{\beta}} P_1 \psi\|_{L^4_{t,x}([0,T]\times \mathbb{T}_L^d)} \le  S_{d,\epsilon} 
 \|\psi\|_{L^2(\mathbb{T}_L^d)}, \qquad  S_{d,\epsilon}\defeq C_{d,\epsilon}
 L^\epsilon \Big \langle  \frac{T}{L^{ \theta_d}} \Big \rangle^{1/4} 
 \]

 Moreover if we denote the characteristic function of the unit cube centered at 
$j \in \mathbb{Z}^d$ by $\mathds{1}_{B_j}$, and define 
\[
\widehat{\psi_{B_j}}(k) = \mathds{1}_{B_j} (k) \widehat{\psi}(k),\qquad \mbox{and therefore }\;\; \psi_{B_0} = P_1\psi\,.
\]
Then, using the Galilean invariance  $\left| e^{-it \Delta_\beta} \psi_{B_j}(x)\right| = \left| [e^{-it \Delta_\beta} (e^{2\pi i j x}\psi)_{B_0}](x-2tj)\right|$, we  have
\begin{equation}\label{eq:stj}
\|e^{it\Delta_{\beta}}  \psi_{B_j}\|_{L^4_{t,x}([0,T]\times \mathbb{T}_L^d)} \le  S_{d,\epsilon}
 \|\psi\|_{L^2(\mathbb{T}_L^d)}.
\end{equation}
Converting this estimate to its dual, and applying the Christ-Kiselev inequality, one gets
\begin{align}\label{eq:st2}
& \left\| \int\limits_0^T e^{-is \Delta} F_{B_j}(s) \,ds \right\|_{L^2(\mathbb{T}^d_L)} \le  S_{d,\epsilon}\| F \|_{L_{t,x}^{4/3} ([0, T]\times \mathbb{T}^d_L)} \\
& \left\| \int\limits_0^t e^{i(t-s) \Delta} F_{B_j}(s) \,ds \right\|_{L^4_{t,x}([0,T]\times \mathbb{T}_L^d)} \le  S_{d,\epsilon}^2 \| F \|_{L_{t,x}^{4/3} ([0, T]\times \mathbb{T}^d_L)}\label{eq:ck}
\end{align} 
for an appropriate choice of $C_{d,\epsilon}$ used in the definition of $ S_{d,\epsilon}$.

\subsection{A priori bound in $\boldsymbol{Z_T^s}$ and energy}  Let $Z_T^s$  denote the function space defined by the norm,
\begin{equation}\label{Zs norm}
\|u\|_{Z^s_T}=\left( \sum\limits_{j \in \mathbb{Z}^d} \langle j \rangle^{2s} \|u_{B_j}\|^2_{L^{4}_{t,x} ([0, T]\times \mathbb{T}^d_L)} \right)^{1/2},
\end{equation}
then the $Z^s_T$ norm of the nonlinearity is bounded.  
\begin{lemma} Fix $s > \frac{d}{2}$.
For every $\epsilon_0 >0$, and an appropriate choice of $C_{d,\epsilon_0}$, we have
\begin{equation}\label{Zsnonlin}
\left\|\int_0^t e^{i(t-s)\Delta_\beta} |u(s)|^2u(s) ds\right\|_{Z_T^s} \le 
 S_*^2 \|u\|_{Z_T^s}^3,  \qquad  S_* \defeq S_{d,\epsilon_0}=  C_{d,\epsilon_0}
 L^{\epsilon_0} \Big \langle  \frac{T}{L^{ \theta_d}} \Big \rangle^{1/4} .
 \end{equation}
\end{lemma}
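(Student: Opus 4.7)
The plan is to apply the Strichartz/Christ--Kiselev estimate \eqref{eq:ck} cube by cube in frequency, and then control the resulting trilinear sum via the algebra property of $\ell^{2,s}(\mathbb Z^d)$ for $s>d/2$. Since spatial Fourier projection commutes with the linear propagator, for every $j\in \mathbb Z^d$ one has
\begin{equation*}
\Bigl(\int_0^t e^{i(t-s)\Delta_\beta}|u|^2 u\,ds\Bigr)_{B_j}
= \int_0^t e^{i(t-s)\Delta_\beta}\bigl(|u|^2 u\bigr)_{B_j}(s)\,ds,
\end{equation*}
and \eqref{eq:ck} applied to each cube bounds the $L^4_{t,x}$ norm of this expression by $S_*^2\bigl\|(|u|^2u)_{B_j}\bigr\|_{L^{4/3}_{t,x}}$. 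Squaring, weighting by $\langle j\rangle^{2s}$ and summing, the lemma reduces to
\begin{equation*}
(\ast)\qquad \sum_{j\in\mathbb Z^d}\langle j\rangle^{2s}\bigl\|(|u|^2u)_{B_j}\bigr\|_{L^{4/3}_{t,x}}^2\lesssim \|u\|_{Z^s_T}^6.
\end{equation*}

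To bound the inner norm I would decompose $|u|^2u=\sum_{j_1,j_2,j_3}u_{B_{j_1}}\overline{u_{B_{j_2}}}u_{B_{j_3}}$ and observe that the Fourier support of each summand lies within $B_{j_1-j_2+j_3}+[-\tfrac32,\tfrac32]^d$, so $P_{B_j}$ annihilates all but those triples with $|j-(j_1-j_2+j_3)|\le C$. Using the uniform-in-$j$ $L^{4/3}$-boundedness of $P_{B_j}$ (Mikhlin together with Galilean invariance), the triangle inequality, and H\"older's inequality $L^4\cdot L^4\cdot L^4\hookrightarrow L^{4/3}$, one obtains
\begin{equation*}
\bigl\|(|u|^2u)_{B_j}\bigr\|_{L^{4/3}_{t,x}}\lesssim\!\!\sum_{|j-(j_1-j_2+j_3)|\le C}\!\!c_{j_1}c_{j_2}c_{j_3},\qquad c_j:=\|u_{B_j}\|_{L^4_{t,x}}.
\end{equation*}

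Finally, $(\ast)$ becomes a weighted trilinear convolution bound. Setting $a_j:=\langle j\rangle^s c_j$ and using $\langle j\rangle\lesssim \max_i\langle j_i\rangle$ (valid since $|j-(j_1-j_2+j_3)|=O(1)$), I would absorb $\langle j\rangle^s$ into whichever $\langle j_i\rangle^s$ is largest. Splitting the resulting sum by which index achieves the maximum, each piece is dominated by $\|a * b * b\|_{\ell^2}$ with $b_j:=a_j\langle j\rangle^{-s}$ (up to a harmless $O(1)$ fattening of the convolution support), and Young's inequality $\ell^2*\ell^1*\ell^1\hookrightarrow \ell^2$ together with the Cauchy--Schwarz estimate $\|b\|_{\ell^1}\le \|a\|_{\ell^2}\,\|\langle\cdot\rangle^{-s}\|_{\ell^2}$ closes the bound; the last norm is finite precisely when $s>d/2$.

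I do not anticipate any real obstacle: the content of the lemma is essentially the Sobolev-algebra property $H^s\cdot H^s\cdot H^s\hookrightarrow H^s$ dressed up on the frequency side, and the hypothesis $s>d/2$ enters only to ensure $\langle\cdot\rangle^{-s}\in \ell^2(\mathbb Z^d)$. The mildest point of care is the redistribution of the weight $\langle j\rangle^s$ in the presence of the slightly fattened resonance condition $|j-(j_1-j_2+j_3)|\le C$, which is a standard maneuver.
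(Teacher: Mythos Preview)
Your proposal is correct and follows essentially the same approach as the paper: apply the Christ--Kiselev estimate~\eqref{eq:ck} cube by cube, decompose the nonlinearity as $\sum_{j_1-j_2+j_3=j+O(1)} u_{B_{j_1}}\overline{u_{B_{j_2}}}u_{B_{j_3}}$, use H\"older $L^4\cdot L^4\cdot L^4\hookrightarrow L^{4/3}$, and close with the $\ell^{2,s}$ algebra property for $s>d/2$. The only cosmetic difference is that the paper carries out the Christ--Kiselev step by duality---pairing with a test function $v\in L^{4/3}_{t,x}$ and bounding $\|\widetilde v_{B_j}\|_{L^4}$ via~\eqref{eq:ck}---which has the minor advantage that the extra projection $P_{B_j}$ never needs to be removed from $L^{4/3}$, so your appeal to Mikhlin/Hilbert transform boundedness is sidestepped; but this is a matter of taste, not substance.
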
 
\begin{proof} Consider  $v\in L^{\frac43}_{t,x}([0,T]\times \T^d_L)$,  and let   $\widetilde v(s, x)= \int_s^Te^{i(s-s')\Delta_\beta}v(s') ds'$, then
\begin{align*}
\left\|\int_0^t e^{i(t-s)\Delta_\beta}P_{B_j} |u(s)|^2u(s) ds  \right \|_{L^4_{t,x}([0,T]\times \T^d_L)} &\\
&\hskip -20mm  =\sup_{\|v\|_{ L^{4/3}_{t,x}} = 1} \int_{0}^T\int_0^t \int_{\T^d_L}\left[ e^{i(t-s)\Delta_\beta}P_{B_j} |u(s)|^2u(s) \right] \overline{v(t,x)}dx, \,ds\, dt \\
&\hskip -20mm =\sup_{\|v\|_{ L^{4/3}_{t,x}} = 1} \int_{0}^T\int_0^t \int_{\T^d_L}|u(s)|^2u(s)  \overline{e^{i(t-s)\Delta_\beta}v_{B_j}(t,x)} \, dx\,ds\, dt \\
&\hskip -20mm  =\sup_{\|v\|_{ L^{4/3}_{t,x}} = 1}\int_{0}^T \int_{\T^d_L}|u(s)|^2u(s) \overline{\widetilde v_{B_j}(s,x)}\,ds\, dx.
\end{align*}
Using equation \eqref{eq:ck}, we have  for every $\epsilon_0 >0$,
\begin{align*}
&\int_{0}^T\int_{\T^d_L} |u(s)|^2u(s) \overline{\widetilde v_{B_j}(s, x)}\,ds\, dx=\sum_{j_1-j_2+j_3-j=O(1)}\int_{0}^T\int_{\T^d_L} u_{B_{j_1}}\overline{u_{B_{j_2}}}u_{B_{j_3}} \overline{\widetilde v_{B_j}}\,ds\, dx\\
&\qquad \lesssim \sum_{\substack{j_1-j_2+j_3-j=O(1)\\ j_1, j_2, j_3 \in \Z^d}} \prod_{k=1}^3\|u_{B_{j_k}}\|_{L^4_{t,x}} \|\overline{\widetilde v_{B_j}}\|_{L^4_{t,x}}\lesssim 
 L^{\epsilon_0} \Big \langle \frac{T}{L^{ \theta_d}}\Big \rangle^{1/2}
\sum_{\substack{j_1-j_2+j_3-j=O(1)\\ j_1, j_2, j_3 \in \Z^d}} \prod_{k=1}^3\|u_{B_{j_k}}\|_{L^4_{t,x}},
\end{align*}
and therefore
\begin{align*}
\left\|\int_0^t e^{i(t-s)\Delta_\beta}P_{B_j} |u(s)|^2u(s) ds\right\|_{L^4_{t,x}([0,T]\times \T^d)}\lesssim 
 L^{\epsilon_0} \Big \langle \frac{T}{L^{ \theta_d}}\Big \rangle^{1/2}
 \sum_{\substack{j_1-j_2+j_3-j=O(1)\\ j_1, j_2, j_3 \in \Z^d}} \prod_{k=1}^3\|u_{B_{j_k}}\|_{L^4_{t,x}}.
\end{align*}
Consequently, for  $s>d/2$, we have
\begin{multline*}
\left(\sum_{j \in  \Z^d}\langle j\rangle^{2s}\left\|\int_0^t e^{i(t-s)\Delta_\beta}P_{B_j} |u(s)|^2u(s) ds\right\|_{L^4_{t,x}}^2\right)^{1/2}\\
\lesssim   L^{\epsilon_0} \Big \langle \frac{T}{L^{ \theta_d}}\Big \rangle^{1/2}
\left(\sum_{j \in  \Z^d}\langle j_1\rangle^{2s} \|u_{B_{j_1}}\|_{L^4_{t,x}}^2\right)^{1/2} \left(\sum_{\ell \in  \Z^d} \|u_{B_\ell}\|_{L^4_{t,x}}\right)^2
\lesssim L^{\epsilon_0} \Big \langle \frac{T}{L^{ \theta_d}}\Big \rangle^{1/2} \|u\|_{Z_T^s}^3.
\end{multline*}
proving equation \eqref{Zsnonlin}.
\end{proof}

\begin{lemma}[A priori energy estimates]
\begin{equation}\label{eq:engn}
\left\|\int_0^t e^{i(t-s)\Delta_\beta}|u|^2u \, ds \right\|_{L^\infty_t H_x^s} \le S_*  \|u\|_{Z^s_T}^3.
 \end{equation}
\end{lemma}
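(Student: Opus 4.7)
The plan is to run the usual $TT^*$/duality strategy to convert an $L^\infty_t H^s_x$ bound on the Duhamel integral into an $L^{4/3}_{t,x}$ bound on the nonlinearity, and then combine it with the trilinear estimate already used in the preceding lemma. Since $e^{-it\Delta_\beta}$ is an isometry on $H^s$, and $P_{B_j}$ commutes with $e^{i(t-s)\Delta_\beta}$, we have for every $t\in [0,T]$
\[
\Big\|\int_0^t e^{i(t-s)\Delta_\beta}|u|^2 u(s)\,ds\Big\|_{H^s_x}^2
\sim \sum_{j\in\mathbb{Z}^d}\langle j\rangle^{2s}\Big\|\int_0^t e^{-is\Delta_\beta}P_{B_j}(|u|^2 u)(s)\,ds\Big\|_{L^2_x}^2.
\]

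The first step is to dispose of the upper limit $t$ on the integral. The dual Strichartz estimate \eqref{eq:st2} gives
\[
\Big\|\int_0^T e^{-is\Delta_\beta}P_{B_j}(|u|^2 u)(s)\,ds\Big\|_{L^2_x}\leq S_*\,\|P_{B_j}(|u|^2u)\|_{L^{4/3}_{t,x}([0,T]\times\mathbb T^d_L)},
\]
and the Christ--Kiselev lemma (applicable since $2<\infty$) promotes this to the truncated version with $\int_0^t$ and a supremum in $t\in[0,T]$ on the left, at the cost of an absolute constant which I absorb into $S_*$ (as in \eqref{eq:ck}). Thus the previous display is controlled by
\[
S_*^2\sum_{j\in\mathbb{Z}^d}\langle j\rangle^{2s}\|P_{B_j}(|u|^2 u)\|_{L^{4/3}_{t,x}}^2.
\]

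The second step is the trilinear estimate, which proceeds essentially identically to the preceding lemma. Writing $u=\sum_{\ell}u_{B_\ell}$, only triples $(j_1,j_2,j_3)$ with $j_1-j_2+j_3=j+O(1)$ contribute to $P_{B_j}(|u|^2u)$. By H\"older in $(t,x)$ (using $\tfrac34=\tfrac14+\tfrac14+\tfrac14$),
\[
\|P_{B_j}(u_{B_{j_1}}\overline{u_{B_{j_2}}}u_{B_{j_3}})\|_{L^{4/3}_{t,x}}\leq \prod_{k=1}^3 \|u_{B_{j_k}}\|_{L^4_{t,x}}.
\]
Putting one factor into $\ell^{2,s}$ and summing the other two in $\ell^1$ (allowed because $s>d/2$), exactly as in the proof of \eqref{Zsnonlin}, yields
\[
\sum_{j}\langle j\rangle^{2s}\Big(\sum_{j_1-j_2+j_3=j+O(1)}\prod_{k=1}^3\|u_{B_{j_k}}\|_{L^4_{t,x}}\Big)^2\lesssim \|u\|_{Z^s_T}^2 \Big(\sum_\ell \|u_{B_\ell}\|_{L^4_{t,x}}\Big)^4\lesssim \|u\|_{Z^s_T}^6.
\]

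Combining the two steps gives the desired bound $\|\int_0^t e^{i(t-s)\Delta_\beta}|u|^2u\,ds\|_{L^\infty_t H^s_x}\leq S_*\|u\|_{Z^s_T}^3$ (after taking the supremum in $t$, which is legitimate thanks to Christ--Kiselev). The only subtlety to watch is that the power of $S_*$ here is $1$ rather than the $S_*^2$ appearing in \eqref{Zsnonlin}: this is because we use only a single dual Strichartz estimate to land in $L^\infty_t L^2_x$, rather than a Strichartz estimate at both ends as is needed to land in $L^4_{t,x}$. There is no real obstacle here; the main point to be careful about is the interchange of the time cutoff and the $L^2$ norm, handled by the Christ--Kiselev lemma as indicated above.
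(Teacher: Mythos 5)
Your proof is correct and follows essentially the same route as the paper: a single application of the (dual) $L^{4/3}\to L^2$ Strichartz bound to land one factor of $S_*$, combined with the same box-localized trilinear H\"older/Cauchy--Schwarz sum used for \eqref{Zsnonlin}. The paper simply runs the duality explicitly against a test function $\psi\in L^2$ and puts $e^{is\Delta_\beta}\psi_{B_{j_4}}$ into $L^4_{t,x}$ via \eqref{eq:stj} (which also makes the truncation $\int_0^t\le\int_0^T$ trivial after taking absolute values, so Christ--Kiselev is not actually needed at this $L^\infty_tL^2_x$ endpoint), whereas you invoke the prepackaged dual estimate \eqref{eq:st2} plus Christ--Kiselev; the two are equivalent.
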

\begin{proof}
By duality, we have
\begin{align*}
\left\|\int_0^t e^{i(t-s)\Delta_\beta}|u|^2u \, ds \right\|_{L^\infty_t H_x^s}
\leq& \sup_{\substack{\| \psi \|_{L^2_x} = 1\\ 0\le t \le T}} \hskip 3mm \int_0^t \int_{\T^d_L}|u|^2u  \; e^{is\Delta_\beta} \langle \nabla \rangle^s \overline{\psi} \, dx\, ds\\
=& \sup_{\substack{\| \psi \|_{L^2_x} = 1\\ 0\le t \le T}} \hskip 3mm \sum_{j_1-j_3+j_3-j_4=O(1)}\int_0^t \int_{\T^d_L}u_{B_{j_1}}\overline{u_{B_{j_2}}} u_{B_{j_3}}  \; e^{is\Delta_\beta} \langle \nabla \rangle^s \overline{\psi_{B_{j_4}}}\, dx\, ds\, .
\end{align*}

Applying the Strichartz estimate \eqref{eq:stj} yields
\begin{align*}
&\sum_{j_1-j_3+j_3-j_4=O(1)}\left|\int_0^t \int_{\T^d_L}u_{B_{j_1}}\overline{u_{B_{j_2}}} u_{B_{j_3}}  \; e^{is\Delta_\beta} \nabla ^s \psi_{B_{j_4}}\, dx\, ds\right| \\
& \qquad \qquad \lesssim \sum_{j_1-j_2+j_3-j_4=O(1)} \langle j_4 \rangle^{s} \prod_{k=1}^3\|u_{B_{j_k}}\|_{L^4_{t,x}} \|e^{is\Delta_\beta} \psi_{B_{j_4}}\|_{L^4_{t,x}}\\
&\qquad \qquad \lesssim  L^{\epsilon_0} \Big \langle \frac{T}{L^{\theta_d}} \Big \rangle^{1/4} 
\sum_{j_1-j_2+j_3-j_4=O(1)} \langle \max(|j_1|, |j_2|, |j_3|)\rangle^{s}\prod_{k=1}^3\|u_{B_{j_k}}\|_{L^4_{t,x}} \| \psi_{B_{j_4}}\|_{L^2_x}\\
&\qquad \qquad \lesssim  L^{\epsilon_0} \Big \langle \frac{T}{L^{\theta_d}} \Big \rangle^{1/4} 
 \left(\sum_{j} \|\psi_{B_{j}}\|_{L^2}^2\right)^{1/2}\left(\sum_{j} \langle j \rangle^{2s} \|u_{B_{j}}\|_{L^4}^2\right)^{1/2}\left(\sum_{j} \|u_{B_{j}}\|_{L^4}\right)^{2} \\
&\qquad \qquad  \lesssim L^{\epsilon_0} \Big \langle \frac{T}{L^{\theta_d}} \Big \rangle^{1/4} \|u\|_{Z^s_T}^3 \| \psi \|_{L^2_x}\,.
\end{align*}
This establishes the stated bound.
\end{proof}

\subsection{Existence theorem} Local well-posedness for \eqref{NLS} will be established  in the  space $Z^s_T$, with data $f$ of size at most $\mathscr I$,
\begin{equation}\label{eq:id}
\mathscr{I}\defeq L^{\epsilon_0}  (TL^{-d})^{\frac{1}{4}} \ge \| e^{it\Delta_\beta} f \|_{Z^s_T} . 
\end{equation}

 This seemingly strange normalization is actually  well adapted to the problem we are considering.  Indeed, consider for simplicity initial data $f$ supported on Fourier frequencies $\lesssim 1$, whose $L^2$ norm is of size $L^{\epsilon_0} $,  and with random  Fourier coefficients of uncorrelated phases.
Then we  expect  $e^{it\Delta_\beta} f$  to be  evenly spread over $\mathbb{T}_L^d$. By conservation of the $L^2$ norm, this corresponds to  $ \| e^{it\Delta_\beta} f \|_{Z^s_T} \sim \mathscr{I}$.

\begin{theorem} \label{pheasant} 
Let $f\in Z_T^s$  with $\mathscr I$ and $S_*$  defined in equations \eqref{eq:id} and \eqref{Zsnonlin} respectively, then
\[  
\begin{cases}
i\partial_t u -   \Delta_\beta u= - \lambda^{2} |u|^{2} u\\
u(0,x)= f(x)
\end{cases}
\]
is locally well-posed in $Z^s_T$, provided
\begin{equation}
\label{boundT}
R\overset{def}{=} 12(\lambda S_*\mathscr{I})^2 \le \frac 1{2}.
\end{equation}
The solution $u\in Z^s_T$,  satisfies $\| u \|_{Z_T^s} \leq 2\mathscr{I}$.  Moreover 
\begin{equation}\label{eq:engineq}
\|u\|_{L_t^\infty H_x^s ([0,T]\times \mathbb{T}^d_L)} \leq \|f\|_{H_x^s} + C\lambda^{2} S_* \mathscr{I}^3=  \|f\|_{H_x^s} + C \frac{R}{S_*} \mathscr{I} \le  \|f\|_{H_x^s} + C R .
\end{equation}
\end{theorem}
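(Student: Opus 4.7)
The plan is to obtain $u$ as the unique fixed point of the Duhamel map
\[
\Phi(u)(t) := e^{it\Delta_\beta} f + i\lambda^2 \int_0^t e^{i(t-s)\Delta_\beta} |u(s)|^2 u(s)\, ds
\]
on the closed ball $B_{2\mathscr{I}} := \{u \in Z_T^s : \|u\|_{Z_T^s} \le 2\mathscr{I}\}$ via Banach's contraction principle. All the analytic work has already been carried out in the two lemmas of this section; what remains is the bookkeeping that checks the size condition \eqref{boundT} is exactly what closes the self-mapping and contraction inequalities, and the application of the energy lemma to the fixed point.

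First I would verify that $\Phi$ maps $B_{2\mathscr{I}}$ to itself. The hypothesis \eqref{eq:id} gives $\|e^{it\Delta_\beta} f\|_{Z_T^s} \le \mathscr{I}$, and \eqref{Zsnonlin} applied to any $u \in B_{2\mathscr{I}}$ yields
\[
\Big\|\lambda^2 \int_0^t e^{i(t-s)\Delta_\beta} |u|^2 u\, ds\Big\|_{Z_T^s} \le \lambda^2 S_*^2 \|u\|_{Z_T^s}^3 \le 8\lambda^2 S_*^2 \mathscr{I}^3 = \tfrac{2}{3}R\,\mathscr{I},
\]
so $\|\Phi(u)\|_{Z_T^s} \le \mathscr{I}(1 + \tfrac{2}{3}R) \le \tfrac{4}{3}\mathscr{I}$ whenever $R \le 1/2$. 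The numerical factor $12$ in \eqref{boundT} is precisely tuned so that both this and the contraction step below close.

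For the contraction estimate, I would decompose $|u|^2 u - |v|^2 v$ as three trilinear expressions, each carrying one factor of $u - v$ and two factors drawn from $\{u, \bar u, v, \bar v\}$. Rerunning the same Hölder/Strichartz chain that produced \eqref{Zsnonlin} in each slot gives
\[
\|\Phi(u) - \Phi(v)\|_{Z_T^s} \le 3\lambda^2 S_*^2 \big(\|u\|_{Z_T^s}^2 + \|v\|_{Z_T^s}^2\big)\|u - v\|_{Z_T^s} \le R \,\|u - v\|_{Z_T^s},
\]
a strict contraction once $R \le 1/2$. Banach's theorem then delivers the unique fixed point $u \in B_{2\mathscr{I}}$.

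Finally, applying \eqref{eq:engn} to this $u$ gives
\[
\|u\|_{L_t^\infty H_x^s} \le \|f\|_{H_x^s} + \lambda^2 S_* \|u\|_{Z_T^s}^3 \le \|f\|_{H_x^s} + 8\lambda^2 S_* \mathscr{I}^3,
\]
and rewriting $\lambda^2 S_* \mathscr{I}^3 = (\lambda S_* \mathscr{I})^2\, \mathscr{I}/S_* = \tfrac{R}{12}\,\mathscr{I}/S_*$ produces both equalities in \eqref{eq:engineq}. The only nontrivial point in the whole argument is recognising that the multilinear \emph{difference} estimate needed for the contraction is not the statement of \eqref{Zsnonlin} itself but the same proof rerun with one entry replaced by $u - v$; everything else is direct from the two preceding lemmas.
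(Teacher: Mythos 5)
Your proposal is correct and follows the paper's argument exactly: a Banach fixed point for the Duhamel map $\varPhi$ on the ball $\{\|u\|_{Z^s_T}\le 2\mathscr{I}\}$, using the $Z^s_T$ nonlinear estimate \eqref{Zsnonlin} for both the self-mapping and contraction steps, and the energy lemma \eqref{eq:engn} applied to the fixed point to obtain \eqref{eq:engineq}. The only blemish is a factor of $2$ in the contraction step: as written, $3\lambda^2 S_*^2\left(\|u\|^2+\|v\|^2\right)\le 24\lambda^2 S_*^2\mathscr{I}^2=2R$ on that ball, whereas the sharper trilinear difference bound $\lambda^2 S_*^2\left(\|u\|^2+\|u\|\|v\|+\|v\|^2\right)\le 3\lambda^2 S_*^2(2\mathscr{I})^2=R$ is what actually closes the argument and is the form the paper uses.
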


\begin{remark}
The time scale $T$ over which the solution can be constructed would be equal to $\sqrt{\tau}$, up to subpolynomial losses in $L$, if the long-time Strichartz estimate conjectured in~\cite{DGG} for $p=4$ could be established. Since it is currently not known to be true, the result stated above gives a shorter time scale, with a more complicated numerology.
\end{remark}

\begin{proof}
This theorem is proved by using a contraction mapping argument, to find a fixed point  of the map,
$$
\varPhi(u) = e^{it\Delta_{\beta}}f + i\lambda^{2} \int\limits_0^t  e^{i(t-s)\Delta_{\beta}} |u|^{2} u(s)\,ds,
$$
 in $\{ u\in Z^s_T \bigm\lvert  \|u\|_{Z^s_T } \le 2 \mathscr{I}\}$.  Consequently   $u = \lim_{N\to \infty} \varPhi^N(0)$, where $\varPhi^N$ stands for the $N$-th iterate of $\varPhi$:
$$
\varPhi^{0}(0) = e^{it\Delta_{\beta}}f, \qquad \varPhi^{N+1}(0) =  e^{it\Delta_{\beta}}f +  i\lambda^{2}\int\limits_0^t  e^{i(t-s)\Delta_{\beta}} |\varPhi^{N}(0)|^2 \varPhi^{N}(0)\,ds\,. \qquad
$$

To  check that $\varPhi$ is a contraction on $B_{Z^s_T} (0, 2\mathscr{I})$, note that by equation \eqref{Zsnonlin},
\[
\| \varPhi(u) - \varPhi^{0}(0) \|_{Z^s_T}  = \left\|  \lambda^{2} \int\limits_0^t e^{i(t-s) \Delta_{\beta}} |u|^{2} u(s) \, ds \right\|_{Z^s_T} 
 \le \lambda^{2}S_*^2 \| u \|_{Z^s_T}^3 \leq  8\lambda^{2}S_*^2 \mathscr{I}^3  \leq R \mathscr{I}\le   \frac 12 \mathscr{I}.
\]
and thus $\varPhi$ maps $B_{Z^s_T} (0, 2\mathscr{I})$ into itself. Again, by equation \eqref{Zsnonlin},
\[
\| \varPhi(u) - \varPhi(v) \|_{Z^s_T}
 \le3 \lambda^2S_*^2(2\mathscr{I})^2
 \| u - v  \|_{Z^s_T} \leq R\| u - v  \|_{Z^s_T} \le \frac 12 \| u - v  \|_{Z^s_T}.
\]
Therefore $\varPhi$ is a contraction on $\{ u\in Z^s_T \bigm\lvert  \|u\|_{Z^s_T } \le 2 \mathscr{I}\}$, and  the $H^s$ estimate follows from the  a priori energy bound.
\end{proof}

Besides the established bounds on $u$, we need to investigate the rate of convergence of $\varPhi^N(u) \to u $. 

\begin{corollary} \label{pheasant2}
Under the conditions of Theorem~\ref{pheasant}, there holds
$$
\| u - \varPhi^N(0) \|_{L^\infty H^s}\leq C\frac{R^N}{S_*}   \mathscr{I} \le CR^N
$$
\end{corollary}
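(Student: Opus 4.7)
The plan is to bootstrap the $Z^s_T$-contraction that already underlies Theorem~\ref{pheasant} and then upgrade its conclusion to $L^\infty_t H^s_x$ by invoking the one-sided energy estimate~\eqref{eq:engn}.

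First, since $u=\varPhi(u)$, I would write $u-\varPhi^{N}(0)=\varPhi(u)-\varPhi(\varPhi^{N-1}(0))$ and use the multilinear refinement of~\eqref{Zsnonlin}: the identical argument, applied after expanding $|w_{1}|^{2}w_{1}-|w_{2}|^{2}w_{2}$ as a sum of trilinear products in which exactly one factor is $w_{1}-w_{2}$, yields
$$
\|\varPhi(w_{1})-\varPhi(w_{2})\|_{Z^s_T}\le C\lambda^{2}S_{*}^{2}\bigl(\|w_{1}\|_{Z^s_T}^{2}+\|w_{2}\|_{Z^s_T}^{2}\bigr)\|w_{1}-w_{2}\|_{Z^s_T}.
$$
Since every Picard iterate and $u$ itself lie in $B_{Z^s_T}(0,2\mathscr{I})$, this becomes the contraction $\|\varPhi(w_{1})-\varPhi(w_{2})\|_{Z^s_T}\le R\|w_{1}-w_{2}\|_{Z^s_T}$, and iterating produces
$$
\|u-\varPhi^{N}(0)\|_{Z^s_T}\le R^{N}\|u-\varPhi^{0}(0)\|_{Z^s_T}\le 3R^{N}\mathscr{I}.
$$

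Second, to obtain the stronger $L^\infty_t H^s_x$ bound I would return to the Duhamel representation $u-\varPhi^{N}(0)=i\lambda^{2}\int_{0}^{t}e^{i(t-s)\Delta_{\beta}}\bigl[|u|^{2}u-|\varPhi^{N-1}(0)|^{2}\varPhi^{N-1}(0)\bigr]\,ds$ and apply the analogous multilinear extension of~\eqref{eq:engn}, namely
$$
\Bigl\|\int_{0}^{t}e^{i(t-s)\Delta_{\beta}}\bigl(|w_{1}|^{2}w_{1}-|w_{2}|^{2}w_{2}\bigr)\,ds\Bigr\|_{L^\infty_t H^s_x}\le CS_{*}\bigl(\|w_{1}\|_{Z^s_T}^{2}+\|w_{2}\|_{Z^s_T}^{2}\bigr)\|w_{1}-w_{2}\|_{Z^s_T},
$$
which follows from the same dual-Strichartz computation as the proof of~\eqref{eq:engn}. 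Inserting $\|u\|_{Z^s_T},\|\varPhi^{N-1}(0)\|_{Z^s_T}\le 2\mathscr{I}$ together with the $Z^s_T$-estimate from the previous step gives
$$
\|u-\varPhi^{N}(0)\|_{L^\infty_t H^s_x}\lesssim \lambda^{2}S_{*}\mathscr{I}^{2}\cdot R^{N-1}\mathscr{I}=\frac{\lambda^{2}S_{*}^{2}\mathscr{I}^{2}}{S_{*}}\,R^{N-1}\mathscr{I}\lesssim \frac{R^{N}}{S_{*}}\mathscr{I},
$$
where the last step uses $R=12(\lambda S_{*}\mathscr{I})^{2}$ from~\eqref{boundT}. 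The absolute bound $CR^{N}$ follows from $\mathscr{I}\lesssim S_{*}$, which is immediate from the defining formulas of these two quantities in the regime $T\le L^{d-\epsilon_{0}}$ imposed throughout the section.

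I do not expect any genuine obstacle here: the entire content of the corollary is the observation that a single pass through the nonlinear Duhamel integral lets one trade a factor of $\lambda^{2}S_{*}\mathscr{I}^{2}\sim R/S_{*}$, rather than the $\lambda^{2}S_{*}^{2}\mathscr{I}^{2}=R$ one would pay by staying in $Z^s_T$, in exchange for control in the stronger $L^\infty_t H^s_x$ norm. The only items requiring attention are the two multilinear difference estimates invoked above, and both are direct variants of~\eqref{Zsnonlin} and~\eqref{eq:engn}.
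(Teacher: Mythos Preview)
Your proof is correct and uses the same two ingredients as the paper—the $Z^s_T$-contraction and the energy upgrade~\eqref{eq:engn}—but organizes them slightly differently. The paper telescopes $u-\varPhi^{N}(0)=\sum_{j\ge N}\bigl(\varPhi^{j+1}(0)-\varPhi^{j}(0)\bigr)$, applies the energy estimate term by term to get $\|\varPhi^{j+1}(0)-\varPhi^{j}(0)\|_{L^\infty H^s}\lesssim (R/S_*)\|\varPhi^{j}(0)-\varPhi^{j-1}(0)\|_{Z^s_T}$, and then sums the resulting geometric series. You instead exploit the fixed-point identity $u=\varPhi(u)$ to iterate the $Z^s_T$-contraction directly on $u-\varPhi^{N}(0)$ and then apply the energy estimate once. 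Your route is marginally cleaner in that it avoids the infinite sum, while the paper's version has the minor advantage of never needing $u$ itself inside the multilinear difference (only iterates appear). Both arguments are equivalent in substance and yield the same bound.
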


\begin{proof}
Since  $\varPhi$ is a contraction with modulus $R$, then 
\[
\| \varPhi^j(0) - \varPhi^{j-1}(0) \|_{Z^s_T} \leq R^{j-1}\| \varPhi^0(0) \|_{Z^s_T} .
\]
Moreover  the energy estimate \eqref{eq:engn} gives
\[
\| \varPhi^{j+1}(0) - \varPhi^j(0) \|_{L^\infty_T H^s} \le C\frac{R}{S_*}  \| \varPhi^{j}(0) - \varPhi^{j-1}(0)  \|_{Z^s_T}.
\]
Consequently by writing  $u - \varPhi^N(0) = \sum\limits_{j=N}^\infty \varPhi^{j+1}(0) - \varPhi^j (0)$, we bound 
\begin{align*}
\| u - \varPhi^N(0) \|_{L^\infty_T H^s} & \leq \sum\limits_{j=N}^\infty \left\| \varPhi^{j+1}(0) - \varPhi^j (0) \right\|_{L^\infty_T H^s} \le C\frac{R}{S_*}
\sum\limits_{j=N}^\infty \left\| \varPhi^{j}(0) - \varPhi^{j-1} (0) \right\|_{Z^s} \\
&\le C\frac{R}{S_*}\sum_{j=N}^\infty R^{j-1} \| \varPhi^0(0) \|_{Z^s_T} \leq C\frac{R^N}{S_*} \| \varPhi^0(0) \|_{Z^s_T}.
\end{align*}
\end{proof}
Next we establish an energy bound for the Feynman trees,
$$
U_{n, \boldsymbol{\ell}}(t) = \frac{1}{L^d} \sum\limits_{k \in \mathbb{Z}^d_L} J_{n, \boldsymbol{\ell}}(t,k) e^{2\pi i (k \cdot x + t Q(k))}.
$$
\begin{corollary} \label{pheasant3}
Under the conditions of Theorem~\ref{pheasant},
\[
\| U_{n, \boldsymbol{\ell}} \|_{L^\infty_T H^s} \le C  \frac{R^N}{S_*} \mathscr{I}\le CR^N
\]
\end{corollary}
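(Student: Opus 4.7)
The plan is induction on $n$, combining the chain-like structure of each Feynman tree $(n,\boldsymbol{\ell})$ with multilinear refinements of the nonlinear bounds \eqref{Zsnonlin} and \eqref{eq:engn}. The first, essentially combinatorial, step is to extract from \eqref{Jnl} and the transition rules \eqref{eq:transition} the recursive representation
$$
U_{n,\boldsymbol{\ell}}(t,x) = i\lambda^2\,\varepsilon_{\boldsymbol{\ell}}\int_0^t e^{i(t-s)\Delta_\beta}\bigl[P_{\boldsymbol{\ell}}\bigl(U_{n-1,\boldsymbol{\ell}^-},W,W\bigr)(s)\bigr](x)\,ds,\qquad U_{0}=W:=e^{it\Delta_\beta}f,
$$
where $\boldsymbol{\ell}^-=(\ell_1,\ldots,\ell_{n-1})$ up to an obvious relabeling, $\varepsilon_{\boldsymbol{\ell}}\in\{\pm 1\}$, and $P_{\boldsymbol{\ell}}$ is the trilinear product placing $U_{n-1,\boldsymbol{\ell}^-}$ in a specific slot and two copies of $W$ (some possibly complex conjugated, according to the parity rule $\sigma_j$) in the other two slots. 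This identity reflects the chain structure of the tree: descending from the root, at every level exactly one node is expanded into three children, precisely one of which lies on the chain continuing further down, while the other two are pure leaves that propagate to level $0$ as copies of $W$.

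The second step is to upgrade \eqref{Zsnonlin} and \eqref{eq:engn} to the trilinear bounds
$$
\left\|\int_0^t e^{i(t-s)\Delta_\beta}(u_1\overline{u_2}u_3)(s)\,ds\right\|_{Z^s_T}\lesssim S_*^2\,\|u_1\|_{Z^s_T}\|u_2\|_{Z^s_T}\|u_3\|_{Z^s_T},
$$
together with the analogous $L^\infty_T H^s_x$ estimate with $S_*^2$ replaced by $S_*$. This is a line-by-line re-run of the proofs in Section \ref{LWP section}: in the sum $\sum_{j_1-j_2+j_3-j=O(1)}\prod_k\|(u_k)_{B_{j_k}}\|_{L^4_{t,x}}$ one places the weight $\langle j\rangle^s$ on the slot of maximal frequency and sums the remaining two factors in $\ell^1$ using the embedding $Z^0_T\hookrightarrow Z^s_T$ valid for $s>d/2$.

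With these two ingredients the induction runs cleanly. The base case $\|U_0\|_{Z^s_T}=\|W\|_{Z^s_T}\le\mathscr{I}$ is \eqref{eq:id}. Applying the trilinear $Z^s_T$ bound to the recursion, together with the identity $\lambda^2 S_*^2\mathscr{I}^2=R/12$, yields inductively $\|U_{n,\boldsymbol{\ell}}\|_{Z^s_T}\le C_n R^n\mathscr{I}$. Reinjecting this into the trilinear energy estimate applied to the same recursion gives
$$
\|U_{n,\boldsymbol{\ell}}\|_{L^\infty_T H^s_x}\le C\lambda^2 S_*\,\mathscr{I}^2\,\|U_{n-1,\boldsymbol{\ell}^-}\|_{Z^s_T}\le C_n\,\frac{R^n}{S_*}\,\mathscr{I},
$$
using $12\lambda^2 S_*\mathscr{I}^2=R/S_*$. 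The concluding estimate $\le C_n R^n$ follows from $\mathscr{I}/S_*\lesssim 1$, which holds in our regime because $(T/L^d)^{1/4}\lesssim\langle T/L^{\theta_d}\rangle^{1/4}$ whenever $\theta_d\le d$; this inequality is satisfied for all $d\ge 3$ by the explicit table following \eqref{impStrich}.

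The main obstacle is the very first step: while the chain structure of the tree is geometrically transparent from Figure \ref{fig:feynman}, turning it into the precise recursion above requires careful bookkeeping of the nested time integral $\int_0^t dt_n\cdots\int_0^{t_2}dt_1\prod_m e^{-2\pi i t_m\varOmega_m}$ appearing in \eqref{Jnl}, the wave-number substitutions \eqref{eq:transition}, and the parities $\sigma_j$, in order to identify $P_{\boldsymbol{\ell}}$ and $\varepsilon_{\boldsymbol{\ell}}$ correctly. Once that is pinned down, the remainder of the argument is a direct application of the multilinear Strichartz framework already developed in Section \ref{LWP section}.
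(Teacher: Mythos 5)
Your overall strategy -- representing $U_{n,\boldsymbol{\ell}}$ by a Duhamel recursion down the tree and then feeding trilinear versions of \eqref{Zsnonlin} and \eqref{eq:engn} into an induction -- is exactly what the paper does, and your numerology ($\lambda^2 S_*$ at the root, $\lambda^2 S_*^2$ at each of the remaining $n-1$ internal nodes, $\mathscr{I}$ at each of the $2n+1$ leaves, giving $R^n\mathscr{I}/S_*$) is correct. The trilinear upgrades of the two nonlinear estimates are indeed line-by-line reruns of the proofs in Section \ref{LWP section}, which only ever use $\sum_{j_1-j_2+j_3-j=O(1)}\prod_k\|(u_k)_{B_{j_k}}\|_{L^4_{t,x}}$.

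However, the combinatorial step that you yourself flag as ``the main obstacle'' is carried out incorrectly: the trees $\jm_{n,\boldsymbol{\ell}}$ are \emph{not} chains, and the recursion $U_{n,\boldsymbol{\ell}}=i\lambda^2\varepsilon_{\boldsymbol{\ell}}\int_0^t e^{i(t-s)\Delta_\beta}P_{\boldsymbol{\ell}}(U_{n-1,\boldsymbol{\ell}^-},W,W)\,ds$ with exactly one ``deep'' slot and two pure copies of $W$ fails for general $\boldsymbol{\ell}$. By \eqref{eq:transition} and the index set for $\boldsymbol{\ell}$, the node $k_{j,\ell_j}$ that splits in passing from level $j$ to level $j-1$ may be \emph{any} of the $2(n-j)+1$ nodes present at level $j$, including a pass-through sibling of a node that split at an earlier stage. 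Concretely, for $n=3$ with $\ell_3=1$, $\ell_2=1$, $\ell_1=4$, the node $k_{1,4}$ is the pass-through of $k_{2,2}$, so \emph{two} of the root's three children are internal nodes: the top-level trilinear product has two slots occupied by nontrivial Duhamel objects and only one by $W$. (Your picture is only guaranteed for the first split below the root; Figure \ref{fig:feynman} happens to depict a chain-like example, which may be the source of the confusion.) As written, your induction on $n$ therefore does not cover all $\boldsymbol{\ell}$. The repair is routine and is what the paper does: define the factors $v_j^m$ level by level, so that $U_{n,\boldsymbol{\ell}}=v_n^1$ is a single Duhamel integral of a trilinear product $v_{n-1}^{1}\overline{v_{n-1}^{2}}v_{n-1}^{3}$ in which \emph{each} factor is either $e^{it\Delta_\beta}u_0$ or a smaller iterated Duhamel object; one then applies the trilinear energy estimate once at the root and the trilinear $Z^s_T$ estimate recursively to every internal factor, inducting on the number of internal nodes rather than peeling off one link of a chain. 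The leaf/internal-node count, and hence the final bound $C_nR^n\mathscr{I}/S_*\le C R^N\mathscr{I}/S_*$ for the degrees $n\ge N$ to which the corollary is applied, is unchanged.
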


\begin{proof}
Since  $U_{n, \boldsymbol{\ell}}$ is the linear propagator of $\jm_{n, \boldsymbol{\ell}}$ in physical space, then they can be represented by the following iterative procedure:  Set $v_0^m = e^{2\pi it\Delta_\beta} u_0$ for $0 \leq m \leq 2n+1$ and for any $1 \leq j\leq n$ we define $v^j_m$ for $0\leq m \leq 2(n-j)+1$ as $v_j^m= v_{j-1}^m$ if $m < \ell_j$, and $ v_j^m= v_{j-1}^{m+2} $ if $m > \ell_j$, where we set
\[
v^{\ell_j}= i\lambda^{2} \int_0^t e^{i(t-s)\Delta_\beta}v_{j-1}^{\ell_j} \overline{v_{j-1}^{\ell_j+1}} v_{j-1}^{\ell_j+2}ds\,.
\]
Hence we have $U_{n, \boldsymbol{\ell}} = v_n^1$. 

Using the energy estimate \eqref{eq:engn}, we bound 
$$
\| v_n^1 \|_{L^\infty_T H^s}  \le \lambda^{2}S_* \| v_{n-1}^{1} \|_{Z_T^s}  \| v_{n-1}^{2} \|_{Z_T^s} 
\| v_{n-1}^{3} \|_{Z_T^s}.
$$
We can then  descend down the tree by estimating $v_{n-j}^{\ell_{n-j}}$ using the $Z^s$ estimate \eqref{Zsnonlin}. This leads to the stated bound. 
\end{proof}

\section{Improved integrability through randomization}\label{KhinchineSection}

Recall that
$$
u_0 = \frac{1}{L^d} \sum\limits_{k \in \mathbb{Z}^d_L} \sqrt{\phi(k)} e^{2\pi i k \cdot x} e^{2\pi i \vartheta_k(\omega)},
$$ 
where the $\vartheta_k(\omega)$ are independent random variables, uniformly distributed on $[0,2\pi]$.

For any $t$, $s$, $\omega$, we have
$$
\| e^{it \Delta_\beta} u_0 \|_{H^s} =\left[  \frac{1}{L^d} \sum\limits_{k \in \mathbb{Z}^d_L} \langle k \rangle^{2s} \phi(k) \right]^{1/2}\,.
$$
In other words, the randomization of the angles of the Fourier coefficients does not have any effect on $L^2$ based norms. This is not the case for Lebesgue indices larger than 2.

\begin{theorem}
\label{rndmthm}
Assume that $|\phi(k)| \lesssim \langle k \rangle^{-s}$, with $s>\frac{d}{2}$. Then
\begin{itemize}
\item[(i)] $\mathbb E \left\|e^{it\Delta_{\beta}}u_0 \right\|_{L^4_{t,x}([0,T]\times\mathbb{T}^d_L)}^4 \lesssim \frac T{L^d} \|u_0 \|_{L^2_x}^4$
\item[(ii)] (large deviation estimate) 
$$
\mathbb{P} \left[ \left\| e^{it\Delta_{\beta}}u_0 \right\|_{L^4_{t,x}([0,T]\times\mathbb{T}^d_L)}^4 > \lambda \right] \lesssim \exp \left( - c \left( \frac{\lambda}{T^{1/4} L^{-d/4}} \right)^2 \right)  
$$
\end{itemize}
\end{theorem}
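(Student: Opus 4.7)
For part (i), my first step would be a direct pointwise computation of $\mathbb{E}|e^{it\Delta_\beta}u_0(x)|^4$. Using the explicit formula
\[
e^{it\Delta_\beta}u_0(x)=\frac{1}{L^d}\sum_{k\in\mathbb{Z}_L^d}\sqrt{\phi(k)}\,e^{2\pi i(k\cdot x+tQ(k)+\vartheta_k(\omega))},
\]
I would expand $|u|^4=u\bar u u\bar u$ as a quadruple sum over $(k_1,k_2,k_3,k_4)$ and take expectation. Independence of the $\vartheta_k$ together with uniformity on $[0,1]$ kills every term except those for which the multiset $\{k_1,k_3\}$ equals $\{k_2,k_4\}$; the two non-degenerate pairings $k_1=k_2,\,k_3=k_4$ and $k_1=k_4,\,k_3=k_2$ also cause the $x$- and $t$-dependent exponentials to collapse, so one is left with a constant in $(t,x)$ of size $2L^{-4d}\bigl(\sum_k\phi(k)\bigr)^2$ up to a degenerate contribution (all four indices equal) which is manifestly lower order under $\phi\in\ell^1$. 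Integrating in $(t,x)\in[0,T]\times\mathbb{T}_L^d$ and recognising $L^{-d}\sum_k\phi(k)=\|u_0\|_{L^2}^2$ then yields the bound $\lesssim (T/L^d)\|u_0\|_{L^2}^4$.

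For part (ii), my plan is the standard moment method built on Khinchine's inequality for Steinhaus variables. For each fixed $(t,x)$ the quantity $u(t,x)=\sum_k c_k(t,x)e^{2\pi i\vartheta_k}$ is a linear combination of i.i.d.\ unit-circle random variables with weights of squared sum $L^{-d}\|u_0\|_{L^2}^2$; Khinchine therefore furnishes
\[
\bigl(\mathbb{E}|u(t,x)|^p\bigr)^{1/p}\lesssim \sqrt{p}\,L^{-d/2}\|u_0\|_{L^2},\qquad p\ge 2,
\]
with a constant independent of $(t,x,L,T)$. I would then apply Minkowski's integral inequality in the form $\|\,\cdot\,\|_{L^p_\omega L^4_{t,x}}\le \|\,\cdot\,\|_{L^4_{t,x}L^p_\omega}$, valid for $p\ge 4$, to transfer the pointwise bound into
\[
\bigl(\mathbb{E}\|u\|_{L^4_{t,x}}^p\bigr)^{1/p}\lesssim \sqrt{p}\,(TL^{-d})^{1/4}\|u_0\|_{L^2}=\sqrt{p}\,T^{1/4}L^{-d/4}\|u_0\|_{L^2}.
\]
This identifies $\|u\|_{L^4_{t,x}}$ as sub-Gaussian with scale parameter $T^{1/4}L^{-d/4}$, and the stated large deviation estimate is then recovered from these moment bounds by Chebyshev's inequality followed by the usual optimisation $p\sim(\lambda/(T^{1/4}L^{-d/4}))^{2}$.

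The main (and really only) obstacle is the careful extraction of the Khinchine constant $\sqrt{p}$ and the clean bookkeeping of the Minkowski swap: one needs that pointwise subgaussianity lifts to subgaussianity of the $L^4_{t,x}$-norm with the correct exponent of $T$ and $L$, which is exactly what the $L^4$ (Lebesgue) meshing with the sub-Gaussian $L^p_\omega$ tails provides. The degenerate-index bookkeeping in part (i) is routine provided one uses that $\phi$ decays rapidly, so that $\sum_k\phi(k)^2\lesssim \sum_k\phi(k)$, which ensures the diagonal is genuinely of lower order than $(TL^{-d})\|u_0\|_{L^2}^4$.
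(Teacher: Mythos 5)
Your proposal is correct and follows essentially the same route as the paper: part (ii) is verbatim the Burq--Tzvetkov scheme the paper uses (pointwise Khinchine, Minkowski swap $L^p_\omega L^4_{t,x}\le L^4_{t,x}L^p_\omega$, Chebyshev, optimize $p\sim(\lambda/(T^{1/4}L^{-d/4}))^2$), and part (i), which the paper dispatches by citing \cite{BT} as standard, is exactly the pairing computation you carry out, with the correct normalization $L^{-d}\sum_k\phi(k)=\|u_0\|_{L^2}^2$ yielding the factor $T/L^d$.
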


\begin{proof} $(i)$ The proof is more or less standard. See~\cite{BT} for instance.

\noindent $(ii)$ We follow the argument in~\cite{BT}. By Minkowski's inequality (for $p \geq 4$) and Khinchin's inequality,
\begin{align*}
\left\|e^{it\Delta_{\beta}}u_0 \right\|_{L^p_\omega(\varOmega,L^4_{t,x}([0,T]\times\mathbb{T}^d_L))} & \lesssim
\left\| e^{it\Delta_{\beta}}u_0 \right\|_{L^4_{t,x}([0,T]\times\mathbb{T}^d_L,L^p_\omega(\varOmega)))} \\
& \lesssim \frac{\sqrt{p}}{L^d} \left\| \left( \sum \phi(k) \right)^{1/2} \right\|_{L^4_{t,x}([0,T]\times\mathbb{T}^d_L)} \lesssim \sqrt p T^{1/4} L^{-d/4}.
\end{align*}
By Chebyshev's inequality,
$$
\mathbb{P} \left[ \left\| e^{it\Delta_{\beta}}u_0 \right\|_{L^p_{t,x}([0,T]\times\mathbb{T}^d_L)} > \lambda \right] \lesssim \lambda^{-p} (C_0 \sqrt  p T^{1/4} L^{-d/4})^p.
$$
The desired inequality is then obvious if $\lambda < 2eC_0 T^{1/4} L^{-d/4}$; if not, it follows upon choosing \\
$p = \left( \frac{\lambda}{C_0 T^{1/4} L^{-d/4} e} \right)^2$.
\end{proof}

As a consequence, we deduce the following proposition.

\begin{proposition}
\label{oriole}
Let $\epsilon_0>0$, $ \alpha >s + \frac{d}{2}$, and assume that $|\phi(k)| \lesssim \langle k \rangle^{-2\alpha} $. Then, for two constant $C,c>0$,
$$
\mathbb{P} \left[ \left\| e^{it\Delta_{\beta}}u_0 \right\|_{Z^s} < T^{1/4} L^{\epsilon_0-d/4} \right] > 1 - C e^{-c L^{\epsilon_0}}.
$$
\end{proposition}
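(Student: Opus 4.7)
The plan is to reduce the statement, via Fourier localization, to controlling each slab $(u_0)_{B_j}$ separately with Theorem~\ref{rndmthm}(ii), and then to close the argument by a union bound over $j \in \Z^d$. Since $(u_0)_{B_j}$ has the same randomized-phase structure as $u_0$ but with $\phi$ replaced by $\phi\,\mathds{1}_{B_j}$, and since the hypothesis $|\phi(k)|\lesssim \langle k\rangle^{-2\alpha}$ gives $\sum_{k\in B_j\cap \Z^d_L}\phi(k)\lesssim L^{d}\langle j\rangle^{-2\alpha}$, rerunning the proof of Theorem~\ref{rndmthm}(ii) (equivalently, applying that theorem to the rescaled datum $\langle j\rangle^{\alpha}(u_0)_{B_j}$) yields the slab-level large deviation estimate
$$
\mathbb{P}\bigl[\|e^{it\Delta_\beta}(u_0)_{B_j}\|_{L^4_{t,x}([0,T]\times \T^d_L)}>\mu_j\bigr]\lesssim \exp\!\left(-c\,\mu_j^{2}\langle j\rangle^{2\alpha}T^{-1/2}L^{d/2}\right).
$$

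I would then choose $\mu_j := c_0^{1/2}\,T^{1/4}L^{\epsilon_0-d/4}\langle j\rangle^{-s-d/2-\delta}$ for a small $\delta>0$, with $c_0$ fixed so that $\sum_j c_0\langle j\rangle^{-d-2\delta}=1$. With this choice, the event $\{\|e^{it\Delta_\beta}(u_0)_{B_j}\|_{L^4_{t,x}}\leq \mu_j\text{ for every }j\}$ implies $\|e^{it\Delta_\beta}u_0\|_{Z^s}^{2}<T^{1/2}L^{2\epsilon_0-d/2}$, directly from the definition of $\|\cdot\|_{Z^s}$. At the same time, the exponent in the slab estimate becomes $-cL^{2\epsilon_0}\langle j\rangle^{2\eta}$ with $\eta := \alpha-s-d/2-\delta>0$, which is available exactly because $\alpha>s+d/2$. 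A union bound then gives
$$
\mathbb{P}\bigl[\|e^{it\Delta_\beta}u_0\|_{Z^s}\geq T^{1/4}L^{\epsilon_0-d/4}\bigr]\leq \sum_{j\in \Z^d}C\,e^{-cL^{2\epsilon_0}\langle j\rangle^{2\eta}}\lesssim e^{-\frac{c}{2}L^{2\epsilon_0}},
$$
where the last inequality follows for $L$ large by factoring out $e^{-\frac{c}{2}L^{2\epsilon_0}}$ and noting that the remaining series $\sum_j e^{-\frac{c}{2}L^{2\epsilon_0}\langle j\rangle^{2\eta}}$ converges uniformly.

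The argument is essentially bookkeeping around Theorem~\ref{rndmthm}(ii); no further probabilistic input is needed. The one point requiring care is the trade-off between the weight $\langle j\rangle^{-d-2\delta}$ that is forced by the $\ell^2$-in-$j$ definition of $Z^s$ (and by the need to sum $c_j$ to $1$) and the decay $\langle j\rangle^{-2\alpha}$ supplied by the hypothesis on $\phi$. The assumption $\alpha>s+d/2$ is exactly what leaves a positive exponent $\eta$, simultaneously making each slab probability subgaussian in $L^{\epsilon_0}$ and allowing the sum over $j$ to converge without consuming that gain.
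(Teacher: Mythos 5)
Your proposal is correct and follows essentially the same route as the paper: apply the Khinchine/large-deviation estimate of Theorem~\ref{rndmthm}(ii) to each Fourier slab $(u_0)_{B_j}$ with a $\langle j\rangle$-dependent threshold, and conclude by a union bound, using $\alpha>s+\frac{d}{2}$ to make the $\ell^2$-in-$j$ sum defining $Z^s$ converge while keeping each slab probability superexponentially small in $L^{\epsilon_0}$. The only difference is cosmetic bookkeeping in how the $L^{\epsilon_0}$ gain is distributed between the threshold and the exponent (the paper uses $\langle j\rangle^{-\alpha}T^{1/4}L^{\epsilon_0/2-d/4}$), so no further comment is needed.
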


\begin{proof} Applying Theorem~\ref{rndmthm} to $(u_0)_{B_j}$,
$$
\mathbb{P}  \left[ \left\| e^{it\Delta_{\beta}}u_0 \right\|_{L^4_{t,x}([0,T]\times\mathbb{T}^d_L)} > \langle j \rangle^{-\alpha} T^{1/4} L^{\frac{\epsilon_0}{2}-\frac{d}{4}} \right] \lesssim \exp ( -c \langle j \rangle^{2\alpha} L^{\epsilon_0}).
$$
Therefore, for $L$ sufficiently large,
\begin{align*}
\mathbb{P} \left[ \left\| e^{it\Delta_{\beta}}u_0 \right\|_{Z^s} < T^{1/4} L^{\epsilon_0-d/4} \right] 
& > 1 - \sum\limits_j \mathbb{P} \left[ \left\| e^{it\Delta_{\beta}}(u_0)_{B_j} \right\|_{Z^s} > T^{1/4} L^{\frac{\epsilon_0}{2}-d/4} \langle j \rangle^{-\alpha} \right] \\
& > 1 -  C \sum\limits_j \exp ( -c \langle j \rangle^{2\alpha} L^{\epsilon_0})\\
& > 1 -  C e^{-c L^{\epsilon_0}}.
\end{align*}
\end{proof}

\section{Proof of the main theorem}\label{maintheoremSection}

Fix $\epsilon_0 > 0$ sufficiently small, and recall that $T\le L^d$, with 
\[
 S_* =  C_{4,\epsilon_0}
 L^{\epsilon_0} \Big \langle  \frac{T}{L^{ \theta_d}} \Big \rangle^{1/4}, \qquad 
 \mathscr{I}= L^{\epsilon_0}  (TL^{-d})^{\frac{1}{4}}, \qquad   \mbox{and }\, 
 R\overset{def}{=} 12(\lambda S_*\mathscr{I})^2 .
 \]

\begin{enumerate}[1), leftmargin =*]
\item \emph{Excluding exceptional data}.  Let $E_{\epsilon_0,L}$ be the event $\{ \left\| e^{it\Delta_{\beta}}u_0 \right\|_{Z^s} \le \mathscr{I}\}$, and $F_{\epsilon_0,L}$ its contrary: $\{ \left\| e^{it\Delta_{\beta}}u_0 \right\|_{Z^s} > \mathscr{I}\}$. By Proposition~\ref{oriole},
$$
\mathbb{P} (F_{\epsilon_0,L}) \lesssim e^{-c L^{\epsilon_0}}.
$$
This is the set appearing in the statement of Theorem \ref{cubic theorem}. By conservation of mass 
$$
\mathbb{E} \left( |a_k(t)|^2 \right) = \mathbb{E} \left( |a_k(t)|^2 \; | \; E_{\epsilon_0,L} \right) + O_{\ell^\infty}( e^{-c L^{\epsilon_0}} L^d).
$$
\item \emph{Iterative resolution}.  To ensure that $R\le \frac 12$ we restrict the range of the parameters $\lambda$, $T$ relative to $L$.   There are two regimes depending on  the Strichartz constant $S_*$ and the number theory restriction $t\le L^{d-\epsilon_0}$ (see Remark \ref{rk:degenerate}).
\begin{itemize}[ leftmargin =*]
\item  $L^{ \theta_d}\lesssim T \lesssim L^{d}$.  The condition $R\le \frac 12$ translates into $T\sim\lambda^{-2}L^{\frac{d+ \theta_d}{2}-4\epsilon_0}$. Therefore  we restrict  $\lambda$ to
\[
 L^{\frac{-d+ \theta_d -8\epsilon_0}{4}}\lesssim \lambda \lesssim L^{\frac{d- \theta_d -8\epsilon_0}{4}}\, .
\]
For this range of parameters,  the energy inequality \eqref{eq:engineq} implies $\|u\|_{L_t^\infty H_x^s ([0,T]\times \mathbb{T}^d_L)} \lesssim 1$. 

\item $T\lesssim L^{ \theta_d}$.  In this case the condition on $R$ restricts $T \sim \min(L^{ \theta_d}, \lambda^{-4}L^{d-8\epsilon_0})$, and therefore 
 \[L^\frac{d -\theta_d-8\epsilon_0}{4}\lesssim\lambda.
 \]
Here the energy inequality also implies  $\|u\|_{L_t^\infty H_x^s ([0,T]\times \mathbb{T}^d_L)} \lesssim 1$. 

Note that for these ranges of parameters $T\le L^{-2\delta}\sqrt{\tau}$, where $\delta$ is that of Theorem \ref{t:asymptotic formula}.
\end{itemize}

With these restrictions on the range of the parameters we proceed by writing $u = \varPhi^N(0) + u - \varPhi^N(0)$. Note that since $\varPhi^N(0)$ is a polynomial of degree $3^N$, we write
\[
u = \sum\limits_{n=0}^N U_{n, \boldsymbol{\ell}} + \sum\limits_{(n,\ell) \in S^N} U_{n, \boldsymbol{\ell}} +  u - \varPhi^N(0),
\]
where $S^N$ includes all the terms in $\varPhi^N(0)$ of degree greater than  $N$.

By Corollary~\ref{pheasant2} and Proposition~\ref{pheasant3}, this implies that
\[
u = \sum\limits_{n=1}^N \sum\limits_\ell U_{n, \boldsymbol{\ell}} + O_{L^\infty_tH^s_x} \left(R^N\right)
\]
where the  constant depends on $N$.   In terms of  Fourier variables this can be written as,
\[
|a_k(t)|^2 = \left| \sum\limits_{n=1}^N \sum\limits_\ell J_{n, \boldsymbol{\ell}} \right|^2 + O_{\ell_L^{1,2s}} \left( R^N\right)=\left| \sum\limits_{n=1}^N \sum\limits_\ell J_{n, \boldsymbol{\ell}} \right|^2 + O_{\ell^{\infty}} \left( L^d R^N\right).
\]

\item \emph{Pairing}.  By Proposition~\ref{merganser},
\begin{align*}
 \left| \sum\limits_{n=1}^N \sum\limits_\ell J_{n, \boldsymbol{\ell}} \right|^2 &= \mathbb{E} \left[ |J_1(k)|^2 + J_0(k) \overline{J_2(k)} + \overline{J_0(k)} J_2(k) \right] + O \left( \frac{t}{\tau} \frac{t \log t}{\sqrt{\tau}} \right) \\ 
& = \phi_k + \frac{2 \lambda^4}{L^{4d}} \sum\limits_{k - k_1 + k_2 - k_3=0} \phi_k \phi_{k_1} \phi_{k_2} \phi_{k_3}
\left[ \frac{1}{\phi_k} - \frac{1}{\phi_{k_1}} + \frac{1}{\phi_{k_2}} - \frac{1}{\phi_{k_3}} \right] 
\times \\
& \hskip 65mm \left| \frac{\sin(t \pi \varOmega(k,k_1,k_2,k_3))}{\pi \varOmega(k,k_1,k_2,k_3)} \right|^2 
+ O_{\ell^\infty} \left( \frac{t}{\tau} \frac{t \log t}{\sqrt{\tau}} \right)
\end{align*}

\item \emph{Large box limit $L \to \infty$.} By the  equidistribution theorem \ref{t:asymptotic formula}, we have for $t < L^{d- \epsilon}$
\begin{multline*}
 \frac{2 \lambda^4}{L^{4d}} \sum\limits_{k - k_1 + k_2 - k_3=0} \phi_k \phi_{k_1} \phi_{k_2} \phi_{k_3} \left[ \frac{1}{\phi_k} - \frac{1}{\phi_{k_1}} + \frac{1}{\phi_{k_2}} - \frac{1}{\phi_{k_3}} \right] \left| \frac{\sin(t \pi \varOmega(k,k_1,k_2,k_3))}{\pi \varOmega(k,k_1,k_2,k_3)} \right|^2 \\
 = \frac{2 \lambda^4}{L^{2d}} \int\limits \delta(\varSigma) \phi(k) \phi(k_1) \phi(k_2) \phi(k_3) \left[ \frac{1}{\phi(k)} - \frac{1}{\phi(k_1)} + \frac{1}{\phi(k_2)} - \frac{1}{\phi(k_3)} \right]\times\\
  \left| \frac{\sin(\pi t\varOmega(k,k_1,k_2,k_3))}{\pi \varOmega(k,k_1,k_2,k_3)} \right|^2 \,dk_1 \, dk_2 \,dk_3 +O_{\ell^\infty}(\frac{t}{\tau}L^{-\delta} ).
\end{multline*}

\item \emph{Large time limit $t \sim T \to \infty$}.  Since for  a smooth function $f$, 
\[
\int \left| \frac{\sin(\pi t x)}{x} \right|^2 f(x)\,dx = \pi^2t f(0) + O(1),
\]
then, with $ \tau=\frac{L^{2d}}{2\lambda^4}$, we have 
\begin{multline*}
 \frac{2 \lambda^4}{L^{2d}} \int\limits \delta(\varSigma) \phi(k) \phi(k_1) \phi(k_2) \phi(k_3) 
\left[ \frac{1}{\phi(k)} - \frac{1}{\phi(k_1)} + \frac{1}{\phi(k_2)} - \frac{1}{\phi(k_3)} \right]\times\\
 \left| \frac{\sin(\pi t\varOmega(k,k_1,k_2,k_3))}{\pi \varOmega(k,k_1,k_2,k_3)} \right|^2 \,dk_1 \, dk_2 \,dk_3 
  = \frac{t}{\tau} \mathcal{T}(\phi,\phi,\phi) + O\left( \frac{1}{\tau} \right).
  \end{multline*}

\end{enumerate}

Consequently, for $\epsilon_0$ sufficiently small  and $t\leq T\le L^{d-\epsilon_0}$,
we  choose $L\geq L_1(\epsilon_0)$ to bound  the error term in Step 1 by  $\frac{t}{\tau}L^{-\epsilon_0}$. Also, since   $R\leq \frac 12$ then by picking $N$ large enough we can 
 bound the error in Step 2 by  $O(\frac{t}{\tau}L^{-\epsilon_0})$.  Similarly, since $t\log t \le L^{-\delta }\sqrt{\tau}$, then  the error for Steps 3, 4, and 5, are of  order $O_{\ell^\infty}(\frac{t}{\tau}L^{-\delta} )$,  and this concludes the proof of Theorem \ref{cubic theorem}.

\section{Number theoretic results}
\label{sectnumbtheo}

 Our aim in this section is to prove the asymptotic formula for the following Riemann sum,
 \begin{theorem}\label{t:asymptotic formula}
 Given $\phi\in \mathscr{S}(\mathbb{R}^d)$ and   $\epsilon>0$, there exists a  $\delta >0$ such that  if  $0<t\leq L^{d -\epsilon}$, then 
 \begin{align*}&
 \sum\limits_{\substack{k_i\in\mathbb{Z}_L^d\\ k-k_1+k_2-k_3=0}}\phi_k \phi_{k_1} \phi_{k_2} \phi_{k_3}  \left[ \frac{1}{\phi_k} - \frac{1}{\phi_{k_1}} + \frac{1}{\phi_{k_2}} - \frac{1}{\phi_{k_3}} \right]
\left| \frac{\sin(\pi t\varOmega(k,k_1,k_2,k_3))}{\pi \varOmega(k,k_1,k_2,k_3)} \right|^2 =\\[.3em]
&L^{2d} \int   \delta(\varSigma)  \phi_k \phi_{k_1} \phi_{k_2} \phi_{k_3}   \left[ \frac{1}{\phi_k} - \frac{1}{\phi_{k_1}} + \frac{1}{\phi_{k_2}} - \frac{1}{\phi_{k_3}} \right]    \left| \frac{\sin(\pi t\varOmega(k,k_1,k_2,k_3))}{\pi \varOmega(k,k_1,k_2,k_3)} \right|^2     dk_1 dk_2 \,dk_3\\[.3em]
 & + O\left(tL^{2d-\delta}\right) + O\left(L^d\right) ,
\end{align*}
where we recall $\varSigma(k,k_1,k_2,k_3) = k - k_1 + k_2 - k_3$.
\end{theorem}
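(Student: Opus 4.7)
The plan is to deduce the asymptotic formula by combining a dyadic decomposition of the sinc-squared kernel with a quantitative equidistribution theorem for the irrational quadratic form $\varOmega$. By the rapid decay of $\phi$, I first truncate to $|k_i|\lesssim 1$ at negligible cost. Resolving the delta constraint by $k_3 = k - k_1 + k_2$, the sum becomes a sum over $(k_1,k_2)\in(\Z_L^d)^2$, the phase becomes the indefinite quadratic form
\[
\varOmega(k,k_1,k_2) = 2B(k-k_1,\,k_1-k_2), \qquad B(u,v)\defeq\sum_i\beta_i u_i v_i,
\]
and the amplitude $F(k_1,k_2)\defeq\phi_k\phi_{k_1}\phi_{k_2}\phi_{k-k_1+k_2}\bigl[\tfrac{1}{\phi_k}-\tfrac{1}{\phi_{k_1}}+\tfrac{1}{\phi_{k_2}}-\tfrac{1}{\phi_{k-k_1+k_2}}\bigr]$ is smooth and compactly supported. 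A key structural feature is that $F$ vanishes on the degenerate loci $\{k_1=k\}$ and $\{k_1=k_2\}$, which are precisely the sets on which $\varOmega\equiv 0$ in $(k_1,k_2)$; this is the reason the usual diagonal catastrophe does not produce a large $O(t^2 L^d)$ contribution.

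Next I would introduce a smooth dyadic decomposition $K_t(\varOmega)=\sum_{a\geq 0}K_{t,a}(\varOmega)$ of $K_t(x)\defeq|\sin(\pi t x)/(\pi x)|^2$, with $K_{t,0}$ supported in $\{|\varOmega|\lesssim 1/t\}$ (size $\sim t^2$) and $K_{t,a}$ supported in $\{|\varOmega|\sim 2^a/t\}$ (size $\sim (t/2^a)^2$) for $1\leq a\lesssim\log t$; only $O(\log t)$ shells contribute because $\varOmega$ is bounded on the support. On each shell, after a further partition of $(k_1,k_2)$ into small cubes on which $F$ is essentially constant, I would apply the equidistribution statement (the sharp form of the Proposition with $\nu=d$, which in turn is a consequence of Corollary~\ref{l:lossy_count}):
\[
\#\bigl\{(k_1,k_2)\in(\Z_L^d)^2\cap B(0,C):\varOmega\in[\alpha,\alpha+h]\bigr\}
= (1+O(L^{-\delta}))\,L^{2d}\,\mathrm{vol}\{\varOmega\in[\alpha,\alpha+h]\},
\]
valid for any window $h\geq L^{-d+\epsilon}$. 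Since each shell has width $\geq 1/t\geq L^{-d+\epsilon}$ by hypothesis, equidistribution applies uniformly to every dyadic piece and every cube of the partition.

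Summing the main terms reconstructs the integral $L^{2d}\int F\cdot K_t(\varOmega)\,dk_1\,dk_2$. The error contributed by shell $a$ is bounded by $L^{2d-\delta}\cdot(2^a/t)\cdot(t/2^a)^2=L^{2d-\delta}\cdot t/2^a$; summing the geometric series over $a$ yields a total error of $O(tL^{2d-\delta}\log t)$, which is absorbed into $O(tL^{2d-\delta'})$ for slightly smaller $\delta'$. The additional $O(L^d)$ term captures the lossy count at the finest scale, where Corollary~\ref{l:lossy_count} provides a bound of the form $L^{2d}h+L^d$ instead of a purely multiplicative $L^{2d}h$ one; the additive $L^d$ piece arises from accidental arithmetic coincidences on the lattice that the volume-based asymptotic cannot resolve.

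The main obstacle is establishing the equidistribution in the third paragraph at the fine scale $L^{-d+\epsilon}$: for a \emph{rational} quadratic form the optimal scale is only $L^{-2}$, and reaching nearly $L^{-d}$ requires the full strength of Bourgain's recent pair-correlation argument for irrational quadratic forms, extended here to the four-wave resonance manifold $k-k_1+k_2-k_3=0$ and to the \emph{indefinite} $2d$-variable form $\varOmega=2B(k-k_1,k_1-k_2)$. Executing this extension, and upgrading the resulting counting bound to a weighted asymptotic with effective error $O(L^{-\delta})$, is the substantive content of Section~\ref{sectnumbtheo}; once it is in hand, the dyadic assembly above yields Theorem~\ref{t:asymptotic formula}.
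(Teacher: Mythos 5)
Your overall architecture matches the paper's: truncate using the decay of $\phi$, reduce to the quadratic form $2B(k-k_1,k_1-k_2)$, freeze the amplitude on small cubes, slice the sinc-squared kernel into level sets of $\varOmega$ of width $\gtrsim 1/t\geq L^{-d+\epsilon}$, and feed each level set into a Bourgain-type equidistribution theorem whose proof you correctly identify as the substantive content. (The paper assembles the kernel by a Stieltjes integration by parts against the counting function $A(y)$ in Lemma \ref{e:count_with_g} rather than by dyadic shells, and it first diagonalizes the form to $\sum_i\beta_i(p_i^2-q_i^2)$ via $p=u'+u''$, $q=u'-u''$ with a parity splitting before invoking Theorem \ref{t:Bourgain}; these are equivalent to what you propose.)

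There is, however, a genuine gap in your treatment of the exact resonances. You assert that the amplitude $F$ vanishes precisely where $\varOmega\equiv 0$, namely on $\{k_1=k\}\cup\{k_1=k_2\}$, and that this kills the diagonal contribution. That is false: since $\varOmega=2\sum_i\beta_i(k-k_1)_i(k_1-k_2)_i$ and $\beta$ is generic, the lattice zero set of $\varOmega$ consists of all configurations in which, \emph{for each coordinate $i$ separately}, either $(k_1)_i=k_i$ or $(k_2)_i=(k_1)_i$; besides the two pure patterns you list there are $\sim L^d$ mixed lattice points on which $F$ does not vanish, and each contributes $t^2\abs{F}$ to the sum. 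This $O(t^2L^d)$ has no continuum counterpart and must be excised by hand; it is admissible only because $t\leq L^{d-\epsilon}$, which is exactly why the theorem is restricted to this range even though the equidistribution itself holds up to $\mu=tL^{-2}\leq L^{d-1-\epsilon}$ (see Remark \ref{rk:degenerate} and step 1) of the paper's proof, which removes the pairs with $\abs{p_j}=\abs{q_j}$ for all $j$). Correspondingly, your multiplicative count $(1+O(L^{-\delta}))L^{2d}\,\mathrm{vol}$ cannot hold ``uniformly for every dyadic piece'': for the innermost window containing $0$ of width $h\sim 1/t\sim L^{-d+\epsilon}$ the volume term is only $\sim L^{d+\epsilon}$, while the exact resonances alone contribute an additive $\sim L^d$ that no volume asymptotic can see — your own closing remark about the ``additive $L^d$ piece'' is in direct tension with the uniform multiplicative form your assembly uses. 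A more minor inaccuracy: the asymptotic with relative error $O(L^{-\delta})$ is Theorem \ref{t:Bourgain}, not a consequence of Corollary \ref{l:lossy_count}, which is only an upper bound.
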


The difficulty in proving this theorem is that $\varOmega$ can be very small, while the stated time interval for the validity of the asymptotic formula is very large. In fact if we restrict ourselves to a timescale which is not too long,  then the asymptotic formula is straight forward as will be demonstrated  in Proposition \ref{th:trivial}. However to prove this theorem as stated we need  to generalize a result of 
 Bourgain on pair correlations of generic quadratic forms   \cite{Bourgain}.
 
Bourgain considered a  positive definite diagonal form,
\begin{equation}\label{qpq}
 Q(n)= \sum_{i=1}^d \beta_in_i^2, \quad n= (n_1,\dots, n_d), \qquad Q(p,q) \defeq Q(p)-  Q(q),\\
\end{equation}
for generic $\beta=(\beta_1,\dots,\beta_d)\in [1,2]^d$, and  proved that for $d=3$ the lattice points in the region,
\[
R_{\mathbb{Z}}\overset{def}{=}\{(p,q)\in\mathbb{Z}^{2d}\cap[0,L]^{2d} \bigm\lvert Q(p,q)\in [a,b], p\ne q\},
\]
are equidistributed at a scale of $\frac 1{L^{\rho}}$, for $0< \rho <d-1$. Specifically, he proved,
\[
\sum_{R_{\mathbb{Z}}}1 =  L^{2(d-1)}(b-a)\mathcal H^{2d-1}\left(\{(x,y)\in [-1,1]^{2d} \bigm\lvert Q(x,y)=0\}\right) + O\left(L^{d-2- \delta}(b-a) \right),
\]
provided  $|a|, |b|<O(1)$ and $L^{-\rho}< b-a<1$.  Here $\mathcal H^{2d-1}$ is the $2d-1$~Hausdorff measure.

Our quadratic form $\varOmega$, restricted to $\varSigma$,  can be transformed to $Q(p,q)$, given in \eqref{qpq},  as follows.
Rescale time $\mu\defeq tL^{-2}$, let $K_i = Lk_i\in \mathbb{Z}$, and denote by 
\[
g(x)=\left(\frac{\sin(\pi x)}{\pi x}\right)^2, \quad  W_0\left(\frac{K}{L},\frac{K_1}{L},\frac{K_2}{L},\frac{K_3}{L}\right)  = \phi_k \phi_{k_1} \phi_{k_2} \phi_{k_3}  \left[ \frac{1}{\phi_k} - \frac{1}{\phi_{k_1}} + \frac{1}{\phi_{k_2}} - \frac{1}{\phi_{k_3}} \right].
 \]
Then the sum can be expressed as
 \[
t^2\sum_{\substack{K,K_1,K_2,K_3\in\mathbb Z^d\\ K-K_1+K_2-K_3=0}}W_0\left(\frac{K}{L},\frac{K_1}{L},\frac{K_2}{L},\frac{K_3}{L}\right)  g (\mu \Omega(K,K_1,K_2,K_3))\,.
 \]
By defining 
\[u'=K_1-K\in\mathbb Z^d,\qquad  u''=K_3-K\in\mathbb Z^d,\qquad \mbox{and }u=(u',u'')\in \mathbb Z^{2d}\]
then
\[\Omega(K,K_1,K_2,K_3)=Q_0(u)\]
where 
\begin{equation}
Q_0(u):=-2\beta_1u'_{1}u''_{1}-2\beta_{2}u'_{2}u''_{2}
-\dots-2\beta_{d}u'_{d}u''_{d} \,.
\end{equation} 
Hence the sum can be expressed as
\begin{equation}\label{innerproduct}
t^2\sum_{\substack{ (u'_i, u''_i)\in\mathbb Z^{2}}}  W_0\left(\frac{K}{L},\frac{u'+K}{L},\frac{u'+u''+K}{L},\frac{u''+K}{L}\right) g(\mu Q_0(u)).
\end{equation}
The quadratic form $Q_0$ can be diagonalized  by making the change of coordinates
\[
p_i= u'_i+u''_i , \qquad q_i= u'_i- u''_i 
\]
where  $p_i$ and $q_i$ are either both even or both odd, i.e.
\[
\sum_{u_i \in \mathbb Z^{2}} =\sum_{p_i,q_i \in 2\mathbb Z }+\sum_{p_i,q_i \in (2\mathbb Z+1) }=\sum_{p_i,q_i \in \mathbb Z }-\sum_{p_i\in 2\mathbb Z, q_i \in \mathbb Z }
-\sum_{p_i\in \mathbb Z ,q_i \in 2\mathbb Z } + 2\sum_{p_i,q_i \in 2\mathbb Z }\,.
\]
Consequently,  the sum \eqref{innerproduct}, can be written as four different sums of the form,
\begin{equation}\label{eq:W}
t^2 \sum_{ (p,q)\in\mathbb Z^{2d}}W\left(\frac{p}{L},\frac{q}{L}\right)g(\mu Q(p,q)), 
\end{equation}
 where $Q(p,q)$ is given by\footnote{There are factors of $2$ missing due to sums over even terms.  However, this has no impact since $\beta$ is generic.} \eqref{qpq}, and where we suppressed the dependence of $W$ on $k$ for convenience. 

\begin{remark}\label{rk:degenerate}  Note that we do not exclude the points when $p_i^2=q_i^2$ for all $i\in[1,\dots,n]$, as Bourgain did.  These points contribute $O(L^d)$ to the sum and will be considered as lower order terms.  They also explain the $O(L^d)$ term in Theorem \ref{t:asymptotic formula}.

It is this fact that prevents us from using the full strength of  our equidistribution result which holds for  $\mu=tL^{-2} \leq L^{d-1 -\epsilon}$, and we use the result for $t\le L^{d -\epsilon}$.  This ensures that 
$O(L^d)$ term is an error in the asymptotic formula.
\end{remark}

 To prove the asymptotic formula given in Theorem \ref{t:asymptotic formula}, with  $0< \mu= tL^{-2} \leq L^{d-1 -\epsilon}$, we proceed as follows:  {\it 1)} identify which part of the sum contributes the leading order term and which part contributes error terms;  {\it 2)} prove equidistribution of lattice points on a coarse scale;  {\it 3)} present Bourgain's theorem on equidistribution on a fine scale; and finally  {\it 4)} prove Theorem \ref{t:asymptotic formula}.
 
  \subsection{Identifying main terms vs error terms.} To identify the leading order term in the equidistribution formula, we first obtain upper bounds on lattice sums that are optimal up to sub-polynomial factor.
 
For generic  $\beta=(\beta_1,\dots,\beta_d)\in [1,2]^d$,
a good upper bound for the linear form $\beta \cdot n\in[a,b]$, where $n=\mathbb{Z}^d$ is a consequence of the pigeonhole principle:
\begin{lemma}\label{linear-count}
The linear form  $\beta \cdot n\in[a,b]$ satisfies the following bound
\begin{equation}\label{e:linear_generic_sum}
\#\{n\in\mathbb{Z}^d\cap[-M,M]^d \bigm\lvert a\leq \beta \cdot n \leq b\} =  \sum_{\substack{a\leq \beta \cdot n \leq b\\\abs{n}\leq M}}1\lesssim M^{(d-1)^+}(b-a)+1
\end{equation}
\end{lemma}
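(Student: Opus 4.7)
The plan is to reduce the count to a pigeonhole argument driven by a metric Diophantine lower bound for $|\beta \cdot m|$ valid for (Lebesgue-)generic $\beta \in [1,2]^d$.

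First I would establish the Diophantine statement: for a.e.\ $\beta \in [1,2]^d$ and every $\varepsilon > 0$ there exists $c_\varepsilon = c_\varepsilon(\beta) > 0$ with
\[
|\beta \cdot m| \geq c_\varepsilon |m|^{-(d-1)-\varepsilon} \qquad \text{for every } m \in \mathbb{Z}^d \setminus \{0\}.
\]
This is a standard Borel--Cantelli argument. For each fixed $m \ne 0$, picking a coordinate $i$ with $|m_i| \gtrsim |m|$ and freezing the remaining $\beta_j$'s, the set of $\beta \in [1,2]^d$ satisfying $|\beta \cdot m| \le \alpha$ has Lebesgue measure $\lesssim \alpha/|m|$. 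Taking $\alpha = |m|^{-(d-1)-\varepsilon}$ and summing dyadically over $|m| \sim R$ yields $\sum_R R^d \cdot R^{-d-\varepsilon} = \sum_R R^{-\varepsilon} < \infty$, so by Borel--Cantelli only finitely many $m$ violate the inequality and $c_\varepsilon(\beta) > 0$ absorbs them.

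Next, let $N$ denote the cardinality in \eqref{e:linear_generic_sum}. If $N \le 1$ the bound is trivial, so assume $N \ge 2$ and enumerate $n_1, \dots, n_N$ so that the values $\beta \cdot n_i$ are strictly increasing (strictness follows from the Diophantine bound applied to the differences $n_i - n_j \ne 0$). Since the $N$ values all lie in $[a,b]$, pigeonhole yields a consecutive pair with
\[
0 < \beta \cdot (n_{i+1} - n_i) \le \frac{b-a}{N-1}.
\]
The vector $m := n_{i+1} - n_i$ is a nonzero element of $\mathbb{Z}^d \cap [-2M,2M]^d$, so $|m| \lesssim M$ and the Diophantine bound gives
\[
c_\varepsilon M^{-(d-1)-\varepsilon} \lesssim \beta \cdot m \le \frac{b-a}{N-1}.
\]
Rearranging yields $N - 1 \lesssim_\varepsilon M^{(d-1)+\varepsilon}(b-a)$, which is exactly the claimed estimate with the convention $M^{(d-1)^+}$, the extra $+1$ absorbing the trivial case $N \le 1$.

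The whole argument is essentially soft: the only delicate point is the measure estimate in the first step, which must select the coordinate realizing $|m_i| \sim |m|$, since a naive bound would give the weaker exponent $-d-\varepsilon$ and destroy the $(d-1)^+$ numerology. The exponent $(d-1)+\varepsilon$ produced by this Borel--Cantelli scheme matches the $(d-1)^+$ loss in the statement exactly, so there is no genuine obstacle to overcome.
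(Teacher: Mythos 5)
Your proof is correct and follows essentially the same route as the paper: a Diophantine lower bound $|\beta\cdot m|\gtrsim |m|^{-(d-1)^+}$ for generic $\beta$, combined with a pigeonhole argument on the differences of the lattice points landing in $[a,b]$. The only difference is that you derive the Diophantine property by a Borel--Cantelli argument, whereas the paper simply cites Cassels for it.
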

\begin{proof}

Since
$\beta = (\beta_1,\dots,\beta_d)$ are generic, then for $0<|n| \le M$ (see for example \cite{Cassels}, Chapter~VII)
 \[
 |\beta\cdot n| \gtrsim  \frac1{M^{(d-1)^+}}.
 \]
For arbitrary   $n^{(1)}\ne n^{(2)}\in \mathbb{Z}^d$ satisfying  $a\leq\beta \cdot n^{(i)} \leq b$  and $0<\abs{n^{(i)}}\leq M$,
\[
\frac{1}{M^{(d-1)^+}}\lesssim \abs{\beta \cdot (n^{(1)}-n^{(2)})}\leq b-a\,.
\]
 By the pigeonhole principle we obtain \eqref{e:linear_generic_sum}.
 \end{proof}

An upper  bound on the cardinality of the set, 
\[
R_{\mathbb{Z}}\overset{def}{=}\{(p,q)\in\mathbb{Z}^{2d}\cap[0,L]^{2d} \bigm\lvert Q(p,q)\in [a,b], p\ne q\},
\]
can be obtained by bounding the number of lattice points in  subsets of the form,
\[
R_{\mathbb{Z}\ell} =\{(p,q)\in\mathbb{Z}^{2d}\cap[0,L]^{2d} \bigm\lvert  Q(p,q)\in [a,b],  p_i\ne q_i, 1\le i\le\ell, \mbox{ and } p_i=  q_i,  \ell +1\le i\le d  \},
\]  
using  Lemma  \ref{linear-count},  and by using   the divisor bound $d(k)\lesssim_\epsilon k^\epsilon$.

\begin{lemma}\label{l:lossy_count-l}
For $\ell=1,\dots d$ the cardinality of $R_{\mathbb{Z}\ell}$ satisfies the bound
\begin{equation}\label{eq:lossy_count}
\# R_{\mathbb{Z}\ell} =\sum_{R_{\mathbb{Z}\ell}} 1 
\lesssim L^{(d+\ell-2)^+}(b-a)+L^{(d-\ell)^+}
\end{equation}
\end{lemma}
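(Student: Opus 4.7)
The plan is to decouple the constraint $Q(p,q)\in[a,b]$ into (i) a linear constraint on integer auxiliary variables, handled by the generic pigeonhole estimate of Lemma~\ref{linear-count}, and (ii) a divisor-type multiplicity for recovering $(p,q)$ from those auxiliaries. Since $p_i = q_i$ for $\ell+1\le i\le d$, these coordinates contribute trivially a factor $\lesssim L^{d-\ell}$ and do not enter $Q(p,q)$. Thus it suffices to bound the number of $(p_i,q_i)_{1\le i\le \ell}\in(\mathbb{Z}\cap[0,L])^{2\ell}$ with $p_i\ne q_i$ and $\sum_{i=1}^\ell \beta_i(p_i^2-q_i^2)\in[a,b]$.

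The key substitution is to set $n_i\defeq p_i^2 - q_i^2\in\mathbb{Z}$ for $1\le i\le\ell$. Since $p_i\ne q_i$ and $p_i,q_i\in[0,L]$, each $n_i$ is a nonzero integer with $|n_i|\lesssim L^2$. Writing $n_i=(p_i-q_i)(p_i+q_i)$ with both factors nonzero and of the same parity, the number of pairs $(p_i,q_i)$ giving rise to a prescribed value of $n_i$ is bounded by the divisor function $d(|n_i|)\lesssim |n_i|^{0^+}\lesssim L^{0^+}$. Multiplying over the $\ell$ coordinates and over the $d-\ell$ trivial tails yields
\[
\# R_{\mathbb{Z}\ell} \;\lesssim\; L^{d-\ell}\cdot L^{0^+}\cdot \#\Big\{(n_1,\dots,n_\ell)\in(\mathbb{Z}\setminus\{0\})^\ell : |n_i|\lesssim L^2,\ a\le \textstyle\sum_{i=1}^\ell \beta_i n_i\le b\Big\}.
\]

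Now I would apply Lemma~\ref{linear-count} to the generic vector $(\beta_1,\dots,\beta_\ell)\in[1,2]^\ell$ at scale $M\sim L^2$, which gives
\[
\#\Big\{n\in\mathbb{Z}^\ell\cap[-M,M]^\ell : a\le \beta\cdot n\le b\Big\}\;\lesssim\; M^{(\ell-1)^+}(b-a)+1\;\lesssim\; L^{2(\ell-1)^+}(b-a)+1.
\]
Substituting this into the previous display gives
\[
\# R_{\mathbb{Z}\ell}\;\lesssim\; L^{(d-\ell)^+}\bigl(L^{2(\ell-1)^+}(b-a)+1\bigr)\;=\;L^{(d+\ell-2)^+}(b-a)+L^{(d-\ell)^+},
\]
which is the required bound.

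The only delicate point is the edge case $\ell=1$, where Lemma~\ref{linear-count} degenerates (the ``generic linear form'' is just the single irrational $\beta_1$, and $M^{0^+}=L^{0^+}$ absorbs the pigeonhole loss), but the conclusion is still the stated one. Everything else is routine bookkeeping of $L^{0^+}$ losses arising from the divisor bound and from genericity; I do not foresee any substantive obstacle beyond ensuring that the degenerate cases $p_i=0$ or $q_i=0$ do not break the factorization argument (they do not, since $p_i\ne q_i$ keeps both $p_i-q_i$ and $p_i+q_i$ nonzero).
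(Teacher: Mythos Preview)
Your proposal is correct and follows essentially the same approach as the paper: substitute $k_i=p_i^2-q_i^2=(p_i-q_i)(p_i+q_i)$, use the divisor bound to control the multiplicity of the map $(p_i,q_i)\mapsto k_i$, and then apply Lemma~\ref{linear-count} at scale $M\sim L^2$ to count the admissible $(k_1,\dots,k_\ell)$. Your extra remarks about the edge case $\ell=1$ and about $p_i=0$ or $q_i=0$ are harmless clarifications; the paper's proof does not single these out since they are absorbed automatically.
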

\begin{proof}
Define $k_i=(p_i-q_i)(p_i+q_i)$, for $1\le i \le \ell$.  Since $p_i=q_i$, for   $\ell +1\le i \le d$, we conclude 
\[
\# R_{\mathbb{Z}\ell} \lesssim L^{d-\ell}\sum_{\substack{a\leq \sum\limits_{i=1}^{\ell} \beta_i k_i \leq b\\0<\abs{k}\lesssim L^2}}\left(\sum_{(p_i-q_i)(p_i+q_i)=k_i}1\right)
\]
By the divisor bound
\[\sum_{(p_i-q_i)(p_i+q_i)=k_i}1\lesssim L^{0^+},
\]
and  by \eqref{e:linear_generic_sum}, with $M=L^2$,  we obtain
\[
\# R_{\mathbb{Z}\ell}    \lesssim L^{(d-\ell)^+}\left(L^{2(\ell-1)^+}(b-a)+1\right),
\]
 and  \eqref{eq:lossy_count} follows.
\end{proof}

\begin{corollary}\label{l:lossy_count}
The number of elements in $R_{\mathbb{Z}}$, can be bounded by
\begin{equation}\label{e:lossy_lossy_count}
\# R_{\mathbb{Z}} \lesssim L^{2(d-1)^+}(b-a) + L^{(d-1)^+}
\end{equation}
Moreover, if we further assume $\abs{a},\abs{b}\leq 1$, then we have the improved bound
\begin{equation}\label{e:lossy_count}
\# R_{\mathbb{Z}} \lesssim L^{2(d-1)^+}(b-a) + L^{(d-2)^+},
\end{equation}
\end{corollary}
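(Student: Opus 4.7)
The plan is to decompose $R_{\mathbb Z}$ according to the set $S\subseteq\{1,\dots,d\}$ of coordinates on which $p$ and $q$ actually differ, and then apply Lemma~\ref{l:lossy_count-l} to each stratum. For a given $S$ of size $\ell\in\{1,\dots,d\}$, let $R_{\mathbb Z,S}$ denote the subset of $R_{\mathbb Z}$ characterized by $p_i\ne q_i$ precisely for $i\in S$. Lemma~\ref{l:lossy_count-l} is stated for $S=\{1,\dots,\ell\}$, but its proof rests entirely on the linear-form estimate~\eqref{e:linear_generic_sum} applied to $\sum_{i\in S}\beta_i k_i$ combined with the divisor bound; since the Diophantine hypothesis on $\beta$ is invariant under permutations of coordinates, I would obtain the same bound $\#R_{\mathbb Z,S}\lesssim L^{(d+\ell-2)^+}(b-a)+L^{(d-\ell)^+}$ uniformly over subsets $S$ of cardinality $\ell$.

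Summing over the $\binom{d}{\ell}=O_d(1)$ subsets of each size and over $\ell=1,\dots,d$ then yields \eqref{e:lossy_lossy_count} directly: the $(b-a)$ contribution is maximized at $\ell=d$, producing $L^{2(d-1)^+}(b-a)$, while the additive $L^{(d-\ell)^+}$ term is maximized at $\ell=1$, producing $L^{(d-1)^+}$.

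For the improved bound \eqref{e:lossy_count}, the plan is to show that under $|a|,|b|\le 1$ the $\ell=1$ stratum is actually empty (for generic $\beta$), so that the summation effectively starts at $\ell=2$. Indeed, on $R_{\mathbb Z,\{i\}}$ one has $Q(p,q)=\beta_i(p_i-q_i)(p_i+q_i)$ with $p_i,q_i\in\mathbb Z_{\ge 0}$ distinct, so $|p_i-q_i|\ge 1$ and $p_i+q_i\ge 1$; therefore $|Q(p,q)|\ge \beta_i>1$ for generic $\beta\in[1,2]^d$ (genericity excludes the measure-zero hyperplane $\beta_i=1$), which is incompatible with $Q(p,q)\in[a,b]\subseteq[-1,1]$. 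With $\ell=1$ removed, the residual $L^{(d-\ell)^+}$ contribution is controlled by $\max_{\ell\ge 2}L^{(d-\ell)^+}=L^{(d-2)^+}$, while the $(b-a)$ term is unchanged, giving \eqref{e:lossy_count}.

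I do not anticipate any real obstacle here: once Lemma~\ref{l:lossy_count-l} is available, the proof consists only of combinatorial bookkeeping plus the elementary emptiness check for the $\ell=1$ stratum. The one mild point to verify carefully is the permutation-symmetry of the generic condition on $\beta$, ensuring that Lemma~\ref{l:lossy_count-l} genuinely applies to arbitrary subsets $S$ and not just to initial segments.
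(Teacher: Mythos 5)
Your proposal is correct and follows essentially the same route as the paper: stratify $R_{\mathbb{Z}}$ by the set of coordinates where $p$ and $q$ differ, apply Lemma~\ref{l:lossy_count-l} (which by permutation symmetry of the generic condition covers every stratum of size $\ell$), take the worst case in $\ell$ for each of the two terms, and note that the $\ell=1$ stratum is empty when $\abs{a},\abs{b}\leq 1$. Your explicit justification of that emptiness via $\abs{Q(p,q)}\geq\beta_i>1$ is exactly the observation the paper leaves implicit.
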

\begin{proof} It suffices to apply the Lemma \ref{l:lossy_count-l}, and to observe that $\ell\in\{ 1,\dots,d \}$ since $p=q$ is excluded. \eqref{e:lossy_count} follows from noting that if $\abs{a},\abs{b}\leq 1$, then $R_{\mathbb{Z}1}$ is empty.
\end{proof}
\begin{remark}
Note, that in terms of the first estimate \eqref{e:lossy_lossy_count}, the second term may be treated as an error as long as  $b-a\geq L^{-(d-1) + \epsilon_0}$ for some $\epsilon_0>0$. Analogously, the second term of \eqref{e:lossy_count} may  be treated as an error assuming  $b-a\geq L^{-d + \epsilon_0}$.
\end{remark}

Following this remark on identifying the leading order term, we can now identify subsets of $R_{\mathbb{Z}}$ that contribute error terms only.  The first such subsets are when $|p_i-q_i|\lesssim L^{1-\delta}$ for some  fixed $\delta>0$ and some $i$ that we may without loss of generality assume to be $1$.
\begin{lemma} \label{l:small-dia}
For $\abs{a},\abs{b}\le 1$, the number of elements in $R_{\mathbb{Z}}$ satisfying $|p_1-q_1|\lesssim L^{1-\delta}$ satisfy the following bound
\[
\# R_{\mathbb{Z}}\cap  \{(p,q)\in \mathbb{Z}^{2d} \bigm\lvert |p_1-q_1| \lesssim L^{1-\delta}\} \lesssim L^{2(d-1)^+ -\delta} (b-a) + L^{(d-1)^+}\,.
\]
\end{lemma}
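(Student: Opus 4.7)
\textbf{Plan for Lemma \ref{l:small-dia}.} The claim refines Corollary \ref{l:lossy_count} by restricting the counting region to the slab $|p_1-q_1|\lesssim L^{1-\delta}$, gaining a factor $L^{-\delta}$ in the main term while preserving the error $L^{(d-1)^+}$. The proof should mirror that of Lemma \ref{l:lossy_count-l}/Corollary \ref{l:lossy_count}, carefully tracking the effect of the slab on the diagonal variables. I would split according to whether $p_1=q_1$ or $p_1\ne q_1$.

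In the first case ($p_1=q_1$), the slab condition is automatic, and the count reduces to the $(d-1)$-dimensional analogue of $R_{\mathbb{Z}}$ times $L$ choices for $p_1=q_1\in[0,L]$. Applying Corollary \ref{l:lossy_count} in dimension $d-1$ gives $L\cdot \big(L^{2(d-2)^+}(b-a)+L^{(d-3)^+}\big)=L^{(2d-3)^+}(b-a)+L^{(d-2)^+}$, which comfortably fits inside the claimed bound provided $\delta\le 1$, so both terms are absorbed.

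In the second case ($p_1\ne q_1$, $|p_1-q_1|\lesssim L^{1-\delta}$), I follow the strategy of Lemma \ref{l:lossy_count-l}: introduce $k_i=p_i^2-q_i^2$ for indices $i\notin T\defeq\{j:p_j=q_j\}$, with $|T|=d-\ell$. The key improvement provided by the slab is the reduced range $|k_1|=|p_1-q_1|(p_1+q_1)\lesssim L^{2-\delta}$, instead of $L^2$, while $|k_i|\lesssim L^2$ for $i\notin T$, $i\ge 2$. For $\ell=1$ (only $k_1\ne 0$), the constraint $\beta_1 k_1\in[a,b]$ with $|a|,|b|\le 1$ forces $|k_1|=O(1)$; combined with the divisor bound $L^{0^+}$ and $L^{d-1}$ choices for the diagonal variables, this contributes $L^{(d-1)^+}$, which matches the stated error exactly. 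For $\ell\ge 2$, I would parametrize by $k_1$ (which has $\lesssim L^{2-\delta}$ values, each with $L^{0^+}$ divisor factorizations compatible with the slab) and apply Lemma \ref{linear-count} in dimension $\ell-1$ with $M=L^2$ to the remaining $k_i$'s, giving $\lesssim L^{2(\ell-2)^+}(b-a)+1$ tuples per $k_1$. Multiplying by $L^{d-\ell+0^+}$ (divisor bounds on the nontrivial $k_i$'s and the $(p_i=q_i)_{i\in T}$ choices) and summing over $\ell$ and $T$ recovers the main term $L^{2(d-1)^+-\delta}(b-a)$.

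The main obstacle lies in controlling the $+1$ error from the linear-form lemma when summed over the $L^{2-\delta}$ values of $k_1$, which naively yields a secondary term of size $L^{d-\delta+0^+}$ exceeding $L^{(d-1)^+}$ in low dimension (e.g.\ $d=3$). The remedy is to exploit that, with $|a|,|b|\le 1$, the constraint $|\beta_1 k_1+\sum_{i\ge 2,\,i\notin T}\beta_i k_i|\le 1$ and the genericity of $\beta$ cut the count further via a pigeonhole argument analogous to the proof of Lemma \ref{linear-count}: for each fixed $(k_2,\dots,k_\ell)$ the interval for $\beta_1 k_1$ has length $\le b-a$, so the effective number of $(k_1,\dots,k_\ell)$ tuples is $\lesssim L^{2(\ell-1)^+-\delta}(b-a)+L^{0^+}$ rather than $L^{2-\delta}\cdot 1$. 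Should this sharpened linear-form estimate still fall short, one can appeal to the extension of Bourgain's equidistribution theorem stated later in this section, applied to the restricted region, which supplies the main term directly and absorbs the leftover errors.
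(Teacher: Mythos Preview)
Your decomposition into $p_1=q_1$ versus $p_1\ne q_1$ matches the paper for the first sub-case, and your treatment of the ``full'' case $\ell=d$ (all $p_i\ne q_i$) is essentially the paper's Case~2. The genuine gap is in the intermediate range $2\le\ell\le d-1$ when $p_1\ne q_1$ but some other $p_i=q_i$. Your own diagnosis is correct: parametrizing by $k_1$ and applying Lemma~\ref{linear-count} to $(k_2,\dots,k_\ell)$ leaves an error $L^{2-\delta}\cdot 1\cdot L^{d-\ell+0^+}$, which for $\ell=2$, $d=3$ gives $L^{3-\delta+0^+}\gg L^{(d-1)^+}$. The proposed remedy does not work as stated. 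Fixing $(k_2,\dots,k_\ell)$ and counting $k_1$ yields only $O(1)$ values of $k_1$, so the tuple count is $\lesssim L^{2(\ell-1)}$, not the claimed $L^{2(\ell-1)^+-\delta}(b-a)+L^{0^+}$; there is no mechanism here to trade the slab width for a factor of $(b-a)$. And the fallback to Bourgain's equidistribution theorem is circular: Lemma~\ref{l:small-dia} feeds into Corollary~\ref{r:reduc-1}, which is invoked in Step~2 of the proof of Theorem~\ref{t:Bourgain}.

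The fix is to abandon the slab constraint entirely for the intermediate $\ell$'s. Whenever $p_i=q_i$ for \emph{any} index $i$ (not just $i=1$), freeze that coordinate and apply Corollary~\ref{l:lossy_count} in dimension $d-1$; the resulting bound $L\cdot\big(L^{2(d-2)^+}(b-a)+L^{(d-3)^+}\big)$ already beats the target without any $\delta$-gain. This is exactly the paper's Case~1. The slab is then used only in Case~2, where $p_i\ne q_i$ for all $i$ (your $\ell=d$): fix $(p_1,q_1)$ in the slab ($\lesssim L^{2-\delta}$ choices) and apply Lemma~\ref{l:lossy_count-l} with $\ell=d-1$ in dimension $d-1$ to the remaining coordinates, giving $L^{2(d-2)^+}(b-a)+L^{0^+}$ per choice, hence total $L^{(2d-2-\delta)^+}(b-a)+L^{(2-\delta)^+}$, acceptable for $d\ge 3$.
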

\begin{proof} If $p_{i}=q_{i}$ for at least one $i$, then by Corollary \ref{l:lossy_count} with $d$ replaced by $d-1$, we have
\[
\# R_{\mathbb{Z}}\cap  \{(p,q) \in\mathbb{Z}^{2d} \bigm\lvert p_{i}=q_{i} \} \lesssim L\left(L^{2(d-2)^+ } (b-a) +L^{(d-3)^+}\right),
\]
which is lower order.  Moreover, if  $p_i \ne q_i$ for all $i$, and $|p_1-q_1| \lesssim L^{1-\delta}$, then the sum over  $2\le i \le d$ can be bounded by 
$L^{2(d-2)^+}(b-a) + L^{0^+}$, using Lemma \ref{l:lossy_count-l}, while the sum over $p_1$ and $q_1$ can be by $L^{2-\delta}$. This gives a bound of $L^{2-\delta} \left(L^{2(d-2)^+}(b-a) + L^{0^+}\right)$, which is lower order if $d\ge 3$.
\end{proof}

Next we show that if one $p_i$ or $q_i$ is less than $L^{1-\delta}$, where we may again assume $i=1$, then the contribution to the number of elements in $R_{\mathbb{Z}}$ is lower order.
\begin{lemma} \label{l:small-range}
For $\abs{a},\abs{b}\leq 1$,  we have the following estimate
\[
\# R_{\mathbb{Z}}\cap \{(p,q)\in\mathbb{Z}^{2d} \bigm\lvert |p_1| \lesssim L^{1-\delta} \} \lesssim  L^{2(d-1)^+ -\delta}(b-a)+ L^{(d-1)^+}
\]
 \end{lemma}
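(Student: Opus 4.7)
The plan is to follow the two-case template used in the proof of Lemma~\ref{l:small-dia}, where the sole structural difference is that our hypothesis constrains $p_1$ directly rather than the difference $p_1 - q_1$. Fortunately, either hypothesis produces the same count of admissible pairs: since $q_1$ ranges freely in $[0,L]$, the number of $(p_1,q_1) \in [0,L]^2$ with $|p_1| \lesssim L^{1-\delta}$ is $\lesssim L^{1-\delta}\cdot L = L^{2-\delta}$, matching the bound used in Lemma~\ref{l:small-dia}. We split
\[
R_{\mathbb{Z}}\cap\{(p,q)\in\mathbb{Z}^{2d} : |p_1|\lesssim L^{1-\delta}\} = A \cup B,
\]
where $A$ collects the configurations with $p_i = q_i$ for at least one $i$, and $B$ collects those with $p_i \neq q_i$ for every $i$.

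For the piece $A$, I would not even attempt to exploit the $|p_1|\lesssim L^{1-\delta}$ hypothesis; the crude bound is already of lower order. Fix an index $i_0$ with $p_{i_0}=q_{i_0}$, giving at most $L$ choices for that common value. Because $\beta_{i_0}(p_{i_0}^2 - q_{i_0}^2) = 0$, the remaining variables satisfy a $(d-1)$-dimensional problem over $[0,L]^{2(d-1)}$ with the same interval $[a,b]$, which still satisfies $|a|,|b|\leq 1$. Corollary~\ref{l:lossy_count} applied with $d$ replaced by $d-1$ gives $L^{2(d-2)^+}(b-a) + L^{(d-3)^+}$, so summing over the $d$ choices of $i_0$ yields
\[
\# A \lesssim L\bigl(L^{2(d-2)^+}(b-a) + L^{(d-3)^+}\bigr) \lesssim L^{2(d-1)^+ - 1}(b-a) + L^{(d-2)+0^+},
\]
which fits inside $L^{2(d-1)^+-\delta}(b-a) + L^{(d-1)^+}$ whenever $\delta \leq 1$ and $d\geq 3$.

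For the piece $B$, I would first fix $(p_1,q_1)$; there are at most $L^{2-\delta}$ such pairs by the counting above. For each fixed pair, the constraint becomes $\sum_{i\geq 2}\beta_i(p_i^2 - q_i^2)\in[a-\beta_1(p_1^2-q_1^2),b-\beta_1(p_1^2-q_1^2)]$, an interval whose length is still $b-a$ but whose location may be of size $\sim L^2$. Here I would invoke Lemma~\ref{l:lossy_count-l} applied with $d$ replaced by $d-1$ and $\ell = d-1$: its proof uses only the divisor bound and the generic linear-form estimate \eqref{e:linear_generic_sum}, so it is insensitive to the location of the interval. That gives a bound of $L^{2(d-2)^+}(b-a) + L^{0^+}$ per fixed pair, hence
\[
\# B \lesssim L^{2-\delta}\bigl(L^{2(d-2)^+}(b-a) + L^{0^+}\bigr) = L^{2(d-1)^+-\delta}(b-a) + L^{2-\delta+0^+},
\]
and the constant term $L^{2-\delta+0^+}$ is absorbed by $L^{(d-1)^+}$ precisely because $d\geq 3$.

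Adding the $A$ and $B$ estimates yields the stated bound. There is no serious obstacle here: the only structural point to verify is that the inner count for piece $B$ does not require the shifted interval to satisfy $|a'|,|b'|\leq 1$ — and this is exactly why I appeal to Lemma~\ref{l:lossy_count-l} rather than to the sharper, interval-restricted form of Corollary~\ref{l:lossy_count}.
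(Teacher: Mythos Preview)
Your proof is correct and follows essentially the same approach as the paper's. The only organizational difference is that the paper routes the degenerate cases (some $p_i=q_i$, or both $|p_1|,|q_1|\lesssim L^{1-\delta}$) through Lemma~\ref{l:small-dia}, whereas you bypass that lemma and appeal directly to Corollary~\ref{l:lossy_count} in dimension $d-1$; the remaining non-degenerate count via fixing $(p_1,q_1)$ and applying Lemma~\ref{l:lossy_count-l} with $\ell=d-1$ is identical in both arguments.
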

\begin{proof}
If both  $|p_1| \lesssim L^{1-\delta}$ and  $|q_1| \lesssim L^{1-\delta}$ or $p_i =q_i$ for at least one $i$, then by Lemma \ref{l:small-dia} we have the stated bound.  Otherwise,  the sum over    $2\le i \le d$  contributes $L^{2(d-2)^+}(b-a) + L^{0^+}$, while the sum over $p_1$ and $q_1$ contributes $L^{2-\delta}$.
\end{proof}   
From Lemma \ref{l:small-dia} and Lemma \ref{l:small-range}, we have
\begin{corollary}\label{r:reduc-1}  Setting
\[
R_{\mathbb{Z}\delta}= R_{\mathbb{Z}} \setminus  \bigcup_{i=1}^d\left( \{(p,q)\in \mathbb{Z}^{2d} \bigm\lvert \mbox{ \rm where, } |p_i|,  |q_i|, \mbox{ \rm  or } |p_i- q_i| \lesssim L^{1-\delta}, \, \mbox{ \rm for at least one $i$} \} \right).
\]
Then, for $\abs{a},\abs{b}\leq 1$, we have the following cardinality bound on the set difference $R_{\mathbb{Z}}\setminus R_{\mathbb{Z}\delta}$
\[\#R_{\mathbb{Z}}\setminus R_{\mathbb{Z}\delta}\les L^{2(d-1)^+- \delta}(b-a)+L^{(d-1)^+} \]
\end{corollary}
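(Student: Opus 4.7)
The plan is to obtain this as an immediate union bound from the two preceding lemmas. By definition, $R_{\mathbb{Z}}\setminus R_{\mathbb{Z}\delta}$ is the union of $3d$ sets, namely
\[
R_{\mathbb{Z}}\setminus R_{\mathbb{Z}\delta} = \bigcup_{i=1}^{d}\Bigl(A_i \cup B_i \cup C_i\Bigr),
\]
where $A_i=R_{\mathbb{Z}}\cap\{|p_i|\lesssim L^{1-\delta}\}$, $B_i=R_{\mathbb{Z}}\cap\{|q_i|\lesssim L^{1-\delta}\}$, and $C_i=R_{\mathbb{Z}}\cap\{|p_i-q_i|\lesssim L^{1-\delta}\}$. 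The assumption $|a|,|b|\leq 1$ is inherited by each of these subsets.

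Next I would apply Lemma \ref{l:small-dia} to each $C_i$ and Lemma \ref{l:small-range} to each $A_i$. Since $Q(p,q)$ is symmetric in $p$ and $q$ up to an overall sign (the roles of $p_i^2$ and $q_i^2$ interchange), the same argument applies verbatim to each $B_i$; alternatively I would note that the involution $(p,q)\mapsto (q,p)$ maps $R_{\mathbb{Z}}$ (with $[a,b]$) bijectively to $R_{\mathbb{Z}}$ (with $[-b,-a]$), so $\#B_i$ equals $\#A_i$ computed with the interchanged interval, whose length is unchanged. In each of the three cases, the right-hand side is bounded by
\[
L^{2(d-1)^+-\delta}(b-a) + L^{(d-1)^+}.
\]

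Finally, I would conclude by a union bound: summing these estimates over the $3d$ indexed sets gives
\[
\#\bigl(R_{\mathbb{Z}}\setminus R_{\mathbb{Z}\delta}\bigr)\;\leq\;\sum_{i=1}^{d}\bigl(\#A_i+\#B_i+\#C_i\bigr)\;\lesssim\; L^{2(d-1)^+-\delta}(b-a) + L^{(d-1)^+},
\]
since the factor $3d$ depends only on the dimension and is absorbed into the $\lesssim$ notation. There is no real obstacle here; this corollary is purely a bookkeeping step that packages Lemmas \ref{l:small-dia} and \ref{l:small-range} into the form that will be applied in the proof of Theorem \ref{t:asymptotic formula}, namely the statement that once we excise all lattice points lying in a thin neighborhood of any coordinate hyperplane $\{p_i=0\}$, $\{q_i=0\}$, or diagonal $\{p_i=q_i\}$, the discarded contribution is of lower order than the main term predicted by equidistribution.
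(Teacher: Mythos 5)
Your proposal is correct and is essentially the paper's argument: the paper deduces the corollary directly from Lemma \ref{l:small-dia} and Lemma \ref{l:small-range} via exactly this finite union bound over the $3d$ excised sets (with the $q_i$ case already folded into the statement of Lemma \ref{l:small-range}, so your involution $(p,q)\mapsto(q,p)$ is a harmless alternative justification). Nothing is missing.
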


\subsection{Asymptotic formula on a coarse scale} These upper bounds, in particular Corollary \ref{l:lossy_count} allow us to present a simple proof of the  asymptotic formula for $\# R_{\mathbb{Z}}$ on a coarser scale,  e.g.\ $b-a = L^{\frac 43}$.  Note hat this is still better then the trivial Riemann sum scale of $b-a= O(L^2)$. 
\begin{proposition}\label{th:trivial}
Fix  $\delta>0$ sufficiently small, then if $L^{1+4\delta} \leq b-a\leq L^{2-\delta}$, we have the 
asymptotic formula
\begin{align*}
\#\left\{(p,q)\in  \mathbb Z^d\cap [0,L]^{2d }\bigm\lvert Q(p,q) \in[ a, b]\right\} &= L^{2(d-1)}(b-a)\iint\limits_{\mathbb R^{2d}}\mathds{1}_{[0,1]^{2d}} (x,y)\delta(Q(x,y))\,dxdy \\&\quad+O\left(L^{2(d-1)-\delta}(b-a)\right)\,.
\end{align*}
\end{proposition}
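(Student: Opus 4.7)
The plan is to introduce the continuous counterpart
\[
V(a,b)\defeq \int_{[0,L]^{2d}} \mathds{1}_{[a,b]}(Q(x,y))\,dx\,dy,
\]
and establish separately (i) the asymptotic evaluation of $V(a,b)$ via rescaling and coarea, and (ii) the Riemann discretization bound $|N(a,b)-V(a,b)|\lesssim L^{2d-1+\epsilon'}$ for any $\epsilon'>0$, where $N(a,b)$ is the lattice count on the left-hand side. The hypothesis $b-a\ge L^{1+4\delta}$ is then exactly what is needed to absorb the discretization loss into the target error. Throughout I assume the regime $|a|,|b|\lesssim 1$ implicit in the paper's applications of this result (if $|a|,|b|$ were of order $L^2$ the leading constant $F(0)$ below would have to be replaced by $F(a/L^2)$).

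For (i), the change of variables $x=Lx'$, $y=Ly'$, using $Q(x,y)=L^{2}Q(x',y')$, gives
\[
V(a,b) = L^{2d}\int_{a/L^{2}}^{b/L^{2}}F(s)\,ds,\qquad F(s)\defeq \int_{[0,1]^{2d}}\delta(Q(x,y)-s)\,dx\,dy.
\]
Since $\nabla Q$ vanishes on $[0,1]^{2d}$ only at the origin, and since the conical singularity of $\{Q=0\}$ at the origin contributes only $O(|s|^{d-1})$ to $F(s)$ (via the finite integral $\int_0^{r}\rho^{2d-3}\,d\rho$ for $d\ge 2$), the density $F$ is continuous, indeed Lipschitz, in a neighborhood of $0$. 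With $(b-a)/L^{2}\le L^{-\delta}$ and $|a|,|b|\lesssim 1$, this gives $F(s)=F(0)+O(L^{-\delta})$ throughout the integration range and hence
\[
V(a,b) = L^{2(d-1)}(b-a)F(0) + O\bigl(L^{2(d-1)-\delta}(b-a)\bigr),
\]
with $F(0)=\iint_{\mathbb{R}^{2d}}\mathds{1}_{[0,1]^{2d}}(x,y)\delta(Q(x,y))\,dx\,dy$ matching the main factor in the proposition.

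For (ii), I would partition $[0,L]^{2d}$ into unit cubes $m+[0,1]^{2d}$ indexed by $m\in\mathbb{Z}^{2d}\cap[0,L]^{2d}$. Since $|\nabla Q|\lesssim L$ on $[0,L]^{2d}$, $Q$ varies by at most $CL$ on each cube, so the cube-average of $\mathds{1}_{[a,b]}\circ Q$ coincides with $\mathds{1}_{[a,b]}(Q(m))$ unless $Q(m)\in[a-CL,a+CL]\cup[b-CL,b+CL]$. Therefore
\[
|N(a,b)-V(a,b)|\lesssim \#\{m\in\mathbb{Z}^{2d}\cap[0,L]^{2d}: Q(m)\in[a-CL,a+CL]\cup[b-CL,b+CL]\},
\]
and the general bound \eqref{e:lossy_lossy_count} of Corollary~\ref{l:lossy_count} applied to each interval of width $O(L)$ yields $L^{2(d-1)^{+}}\cdot L+L^{(d-1)^{+}}\lesssim L^{2d-1+\epsilon'}$ for any $\epsilon'>0$.

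Combining the two steps: the hypothesis $b-a\ge L^{1+4\delta}$ forces the target error $L^{2(d-1)-\delta}(b-a)$ to be at least $L^{2d-1+3\delta}$, which dominates the Riemann loss $L^{2d-1+\epsilon'}$ as soon as $\epsilon'<3\delta$. The main obstacle is precisely this step~(ii): the polynomial slack $L^{3\delta}$ between the coarse lattice count on intervals of width $\sim L$ and the required error is exactly the margin built into the threshold $b-a\ge L^{1+4\delta}$. Pushing below that threshold, which is the content of Bourgain's finer equidistribution invoked in the sequel, would require controlling lattice counts in intervals of width strictly smaller than $L$ and lies beyond what Corollary~\ref{l:lossy_count} can provide.
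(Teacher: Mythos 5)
Your argument is correct in substance but takes a genuinely different, and more elementary, route than the paper. The paper first replaces both sharp cutoffs by smooth ones (at spatial scale $L^{\delta}$ and at scale $L^{1+2\delta}$ in the value of $Q$, the replacement error being controlled by Corollary~\ref{l:lossy_count} and absorbed using $b-a\geq L^{1+4\delta}$), then applies Poisson summation: the frequency $\ell=0$ produces the main term, the frequencies $\ell\neq 0$ are killed by non-stationary phase for $|s|\leq L^{-1-\delta}$ and by the rapid decay of $\widehat{h_L}$ for larger $|s|$. You avoid Fourier analysis entirely: you compare the lattice count to the volume cube by cube, observing that the discrepancy is supported on cubes whose center lies in one of two collars $\{|Q-a|\leq CL\}$, $\{|Q-b|\leq CL\}$, and you bound the number of such lattice points by the same Corollary~\ref{l:lossy_count} applied to intervals of width $O(L)$ (note you should also account for the diagonal $p=q$, which the corollary excludes, and for the $O(L^{2d-1})$ boundary cubes of $[0,L]^{2d}$; both are harmless). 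The two proofs thus rest on the same number-theoretic input, but yours is shorter and makes transparent why the threshold $b-a\geq L^{1+4\delta}$ appears; the paper's Fourier-analytic setup is the template that Bourgain's finer argument (Theorem~\ref{t:Bourgain}) refines, which is presumably why the authors chose it even at this coarse scale.

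One point needs fixing. Your standing assumption ``$|a|,|b|\lesssim 1$'' is incompatible with the hypothesis $b-a\geq L^{1+4\delta}$, since the latter forces $\max(|a|,|b|)\geq \frac{1}{2}L^{1+4\delta}$; as written, your Step~(i) proves a vacuous statement. The correct implicit restriction, consistent with the application in Step~6 of the proof of Theorem~\ref{t:Bourgain} (where $[a,b]=[\xi-L^{4/3},\xi+L^{4/3}]$ with $|\xi|\leq 1$), is $|a|,|b|\lesssim L^{2-\delta}$: then after rescaling the variable $s=Q/L^{2}$ ranges over an interval contained in $[-CL^{-\delta},CL^{-\delta}]$, and the Lipschitz continuity of the density $F$ at $0$ that you establish gives $F(s)=F(0)+O(L^{-\delta})$, which is all your Step~(i) actually uses. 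Your parenthetical remark that the leading constant would otherwise be $F(a/L^{2})$ rather than $F(0)$ is a fair observation — the paper's own proof makes the same implicit assumption when it replaces $h_L(L^{2}Q)$ by $(b-a)\delta_{\mathrm{dirac}}(Q)$ citing only $b-a<L^{2-\delta}$ — but the assumption must be stated in the form above, not as $|a|,|b|\lesssim 1$. With that correction, and noting that the general bound \eqref{e:lossy_lossy_count} (unlike \eqref{e:lossy_count}) indeed holds with no restriction on the location of $[a,b]$, your proof goes through.
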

\begin{proof}

 First we will smooth the characteristic functions by extending the region to a slightly bigger region with a controlled error term.  This is done as follows. Let $w_L\in  C_c^\infty([-L^{\delta},L+L^{\delta}])$ be a bump function satisfying $w_L(x)=1$ for $x\in[0,L]$ and
\[\norm{w_L}_{C^N}\les L^{ -N\delta}\, .\]
Then by setting $W_L(x, y)= \prod_{i=1}^d w_L(L x_i) w_{L}(y_i)$, we have,
\[
\sum_{p,q\in \mathbb Z^d} W_L\left(\frac{p}{L},\frac{q}{L}\right) -  \mathds{1}_{[0,L]^{2d}} \left(p,q\right) = O\left(L^{2d-1+\delta}\right) \,.
\]
Moreover, if we denote by $h_L\in  C_c^\infty([a-L^{1+2\delta},b+L^{1+2\delta}])$  a bump function $h_L(x)=1$ for $x\in[a, b]$ and 
\[\norm{h_L}_{C^N}\les L^{ -N(1+2\delta)}\,.\]
then by Corollary \ref{l:lossy_count}, we have 
\begin{multline*}
\sum_{p,q\in \mathbb Z^d} W_L\left(\frac{p}{L},\frac{q}{L}\right)h_L\left(Q(p,q)\right) -  \mathds{1}_{[0,L]^{2d}} \left(p,q\right)  \mathds{1}_{[a,b]}( Q(p,q)) = \\
O\big(L^{2d-1+\delta}\big) +  O\big(L^{(2d-1+2\delta)^+}\big) = O\left(L^{2(d-1)-\delta}(b-a)\right)\,.
\end{multline*}
assuming that $b-a\geq L^{1+4\delta}$. Thus, it is sufficient to obtain the asymptotic formula for 
\[ 
\mathcal S :=\sum_{p,q\in \mathbb Z^d} W_L\left(\frac{p}{L},\frac{q}{L}\right)h_L\left(Q(p,q)\right) \,.
\]

Using  Fourier transform, we express $\mathcal S$ as 
\begin{align}
\mathcal S=\int_{-\infty}^{\infty} \widehat h_L (s) \sum_{p,q} W_L\left(\frac{p}{L},\frac{q}{L}\right) e(Q(p,q)s)\,ds := \int_{-\infty}^{\infty}\widehat h_L(s)S(s)\,ds
\end{align}

Applying Poisson summation we may rewrite $S(s)$ as
\begin{align}
S(s)=&\sum_\ell\int  W_L\left(\frac{x}{L},\frac{y}{L}\right) e( Q(x,y)s -m\cdot x -n\cdot y )\, dx\,dy\\
=&L^{2d}\sum_\ell\int W_L\left(z\right) e(L^2Q(z)s-L \ell\cdot z)\,dz \label{e:poisson_St}
\end{align}
where $z = (x,y)$, and $\ell = (m,n)$.

The term $\ell =0$  contributes the asymptotic formula
\[
L^{2d}\int W_L(z) h_L(L^2Q(z)) dz = L^{2(d-1)}(b-a)\int\limits_{\mathbb R^{2d}}\mathds{1}_{[0,1]^{2d}} (z)\delta(Q(z))\,dz +O\left(L^{2(d-1)-\delta}(b-a)\right)
\]
where we used $(b-a)<L^{2-\delta}$ in replacing $h_L(L^2Q)$) by $\delta_{dirac}(Q)$.
So it remains to show that the sum for  $\ell\neq 0$ can be treated as error.  First we estimate the sum for $s \le \frac1{L^{1+\delta}}$. In this case we write 
$ \Phi(z,\ell,s) = L^2Q(z)s-L \ell\cdot z,$ and note that since $|s| \le \frac1{L^{1+\delta}}$ and $|z|\lesssim 1$, then $\abs{\nabla_z \Phi(z,m,s)}\ge \frac {L\abs{\ell}}{2}$, where
\begin{equation}\label{eq:lower-b}
\nabla_z \Phi(z,\ell,s)= L^2\nabla Q(z)s-L\ell\,,
\end{equation}
and thus upon integrating \eqref{e:poisson_St} by parts, we obtain
\begin{align}
S(s)=
\sum_{\ell \neq 0}
L^{2d}\int \nabla_j \left(\frac{ W_L(z)}{2\pi i \nabla_j \Phi(z,\ell,s)} \right)e( \Phi(z,\ell,s) )\,dz .
\end{align}
Since each derivative of  $W_L$ contributes $L^{1-\delta}$, then  each integration by parts contributes a factor of $\frac{1}{L^{\delta}\abs \ell}$. Applying a sufficient number of integrations by parts, and using the fact that $|\widehat h_L(s)|\lesssim b-a$,  we may ensure that the contribution for $\ell \neq 0$ and $|s| \le  \frac1{L^{1+\delta}}$ is arbitrarily small. 

For $|s|\ge  \frac1{L^{1+\delta}}$  we note that 
\[
|\widehat h_L(s)|\lesssim (b-a) \frac {1}{\left(L^{1+2\delta}|s|\right)^N}, 
\]
for all $N$, and thus this term can be treated as an error. This concludes the stated result.
\end{proof}

\subsection{Bourgain's Theorem} Now we present Bourgain's proof of equidistribution.
\begin{theorem}\label{t:Bourgain}
Fix $\epsilon>0$, then for $\delta>0$ sufficiently small the following statement is true: Suppose $I_j,J_j\subset[0,L]$, $j=1,\dots,d$ for $d\geq 3$ are intervals with length satisfying
\begin{equation}
L^{1-\delta} \leq \abs{I_j},\abs{J_j} \leq L
\end{equation}
Then for $a,b$ satisfying $\abs{a},\abs{b}\leq 1$ and ${L^{-d+1+\epsilon}}<b-a<L^{-\epsilon}$ we have
\begin{equation}\label{e:equi_dist_Bourgain}
\sum_{\substack{a\leq Q(p,q) \leq b\\ p_j\in I_j, q_j\in J_j \\  p \neq q}}1=  \int_{I_1\times \dots \times I_d}\int_{J_1\times \dots\times J_d}\mathds{1}_{a\leq Q(x,y)\leq b}\,dxdy+O(L^{2(d-1-d\delta)}(b-a))\,.
\end{equation}
\end{theorem}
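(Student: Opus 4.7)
\textbf{Proof plan for Theorem \ref{t:Bourgain}.} My approach will parallel the proof of Proposition \ref{th:trivial}, with the Poisson summation / smoothing scheme as the backbone; the new difficulty is that $b-a$ may be as small as $L^{-d+1+\epsilon}$, so the easy integration-by-parts argument used there no longer handles the Fourier dual variable $s$ in the full range where $\widehat h_L$ has mass, and one must pass through a stationary phase calculation and exploit the genericity of $\beta$ to sum the resulting oscillatory contributions.

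First I would smooth the three characteristic functions. Choose product bump functions $W(x,y) = \prod_{j=1}^d w_{I_j}(x_j) w_{J_j}(y_j)$ majorizing (and minorizing) $\mathds{1}_{\prod I_j \times \prod J_j}$ at spatial scale $L^{1-2\delta}$, and a bump $h$ majorizing (and minorizing) $\mathds{1}_{[a,b]}$ at energy scale $\eta = L^{-d+1+\epsilon/2}$. The errors from replacing the sharp cutoffs by $W$ and $h$ are counted by Corollary \ref{l:lossy_count} (boundary layers in $(p,q)$-space of width $L^{1-2\delta}$ in one coordinate, and boundary layers in $Q$-space of width $\eta$): both are $\lesssim L^{2(d-1)^+-\delta}(b-a)$ for $\delta$ small enough. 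It suffices therefore to analyze the smoothed sum $\mathcal S = \sum_{p,q\in \mathbb Z^d} W(p,q)\, h(Q(p,q))$.

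Next, write $h(Q(p,q)) = \int \widehat h(s)\, e(s\,Q(p,q))\,ds$ and apply Poisson summation in $(p,q)$, exactly as in \eqref{e:poisson_St}, to obtain
\begin{equation*}
\mathcal S = L^{2d} \int \widehat h(s) \sum_{\ell \in \mathbb Z^{2d}} \int \widetilde W(z)\, e\bigl(L^2 Q(z)\, s - L\ell\cdot z\bigr)\,dz\,ds,
\end{equation*}
where $\widetilde W$ is $W$ rescaled to the unit cube. The term $\ell = 0$ produces the main term, together with an error $O(L^{2(d-1)-\delta}(b-a))$ coming from replacing $h_L(L^2 Q)$ by $\eta^{-1}\mathds{1}_{[a,b]}(L^2 Q) \cdot \eta$ (the smoothing scale contributes negligibly because $\eta \ll b-a$). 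The whole game is then to control the $\ell \neq 0$ contributions.

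For each $\ell \neq 0$, split the $s$-integral at the threshold $|s| \sim |\ell|/L$. On $|s| \lesssim |\ell|/L^{1+\delta}$ one has $|\nabla_z \Phi(z,\ell,s)| \gtrsim L|\ell|$ on the support of $\widetilde W$, and integrating by parts in $z$ many times — each derivative of $\widetilde W$ costing only $L^{-\delta}$ — gives super-polynomial decay in $|\ell|$ that the trivial bound $|\widehat h(s)| \lesssim b-a$ easily absorbs. On the complementary region, perform stationary phase in $z$: the critical point $z^*_\ell(s) = \ell/(2Ls\,\tilde\beta)$ (componentwise, with appropriate signs inherited from $Q = Q_1(p) - Q_1(q)$) lies in $\supp \widetilde W$ only when $|\ell| \lesssim L s$, and the Hessian determinant is $\sim (L^2 s)^{2d}$, yielding
\begin{equation*}
S(s) = \frac{c_d}{s^d} \sum_{0 < |\ell| \lesssim Ls}\! \widetilde W(z^*_\ell(s))\, e\!\left(-\frac{Q^*(\ell)}{4s}\right) + O_N\bigl((Ls)^{-N}\bigr),
\end{equation*}
with $Q^*(\ell) = \tfrac14\bigl(\sum \beta_i^{-1} m_i^2 - \sum \beta_i^{-1} n_i^2\bigr)$ the dual form (and $c_d$ a complex constant from the indefinite signature).

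The hard part, and the main obstacle, is bounding
\begin{equation*}
\int \widehat h(s)\, s^{-d}\!\!\sum_{0<|\ell|\lesssim Ls}\! \widetilde W(z^*_\ell(s))\, e\!\left(-\frac{Q^*(\ell)}{4s}\right)\,ds,
\end{equation*}
which is essentially a weighted Fourier transform of a theta-like sum attached to the dual form $Q^*$. This is precisely the regime in which Bourgain's argument for $d=3$ (\cite{Bourgain}) exploits the Diophantine genericity of $\beta$: for a full-measure set of $\beta$, the dual coefficients $\beta_i^{-1}$ satisfy quantitative irrationality estimates that prevent $Q^*(\ell)/(4s)$ from being too close to an integer too often, which in turn yields cancellation in the inner sum via a Weyl/van der Corput-type bound, and saves a factor of $L^{-\delta}$ over the trivial estimate. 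The plan is to adapt that argument to general $d \geq 3$ by treating the $p$-variables and $q$-variables separately (so that the $\ell$-sum factorizes into two indefinite theta-type sums of the form treated in Bourgain) and then summing over dyadic blocks in $|s|$, using the decay $|\widehat h(s)| \lesssim (b-a) \min(1, (\eta|s|)^{-N})$ to truncate the $s$-range to $|s| \lesssim \eta^{-1+\delta}$, which combined with $|\ell| \lesssim Ls$ gives a manageable total number of terms. Summing over all dyadic blocks and all $\ell$, the total contribution is $O(L^{2(d-1)-\delta}(b-a))$ provided $b-a \geq L^{-d+1+\epsilon}$.
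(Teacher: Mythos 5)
Your setup (smoothing the cutoffs at scale $L^{1-2\delta}$ in space and $\eta$ in energy, controlling the replacement errors by Corollary \ref{l:lossy_count}, Poisson summation, extraction of the main term from $\ell=0$, and integration by parts for $|s|\lesssim |\ell|/L^{1+\delta}$) is sound and mirrors what the paper does in Proposition \ref{th:trivial}. But the theorem's entire content lives in the step you defer: after stationary phase you are left with a truncated theta-type sum $\sum_{0<|\ell|\lesssim Ls} e(-Q^*(\ell)/4s)$ attached to the dual indefinite form, integrated against $\widehat h(s)$ over $|s|$ up to $\eta^{-1}\sim L^{d-1-\epsilon}$, so that $|\ell|$ ranges up to $L^{d-\epsilon}$. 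Obtaining a power saving in such a sum for a generic \emph{irrational} indefinite form at these scales is not a "Weyl/van der Corput-type bound" one can quote; it is essentially equivalent to the equidistribution statement you are trying to prove, and it is not "of the form treated in Bourgain" --- the reference \cite{Bourgain}, and the paper following it, never pass to the dual form or to stationary phase at all. As written, your plan is circular at its critical juncture, and the claim that the $\ell$-sum "factorizes into two indefinite theta-type sums of the form treated in Bourgain" has no support.

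For contrast, the paper's route is structurally different and is worth internalizing: after localizing to boxes where $|p_i|,|q_i|,|p_i-q_i|\gtrsim L^{1-\delta}$, it takes logarithms to convert the condition $a\le Q(p,q)\le b$ into $|\ln(\sum_{j<d}\beta_j(p_j^2-q_j^2)-\xi)-\ln(p_d^2-q_d^2)-\ln\beta_d|\le A$, writes the count as $\int S_1(t)\overline{S_2(t)}e^{-it\ln\beta_d}\widehat{\mathds{1}_{[-A,A]}}\,dt$ with Dirichlet-polynomial-type sums $S_1(t)=\sum(\cdots)^{it}$, $S_2(t)=\sum(p_d^2-q_d^2)^{it}$, and then (i) disposes of the coarse part $|Q|\lesssim L^{4/3}$ via Proposition \ref{th:trivial}, (ii) bounds the fine part in $L^2$ of the parameters $(\beta_2,\beta_d)$ using van der Corput, divisor bounds, the Mellin transform, and the fourth-moment estimate $\int_0^T|\zeta(\tfrac12-it)|^4\,dt\les T^{1+}$, and (iii) concludes for almost every $\beta$ by Chebyshev and a union over dyadic $L$ and discretized intervals $(a,b)$. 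Note in particular that genericity enters through a measure-theoretic average over $\beta$, not through an explicit Diophantine condition fed into a pointwise exponential-sum bound; your proposal does not engage with this mechanism, and without it (or a genuine substitute for the dual theta-sum estimate) the proof does not close.
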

In order to prove Theorem \ref{t:Bourgain}, we first make a series of reductions.

\underline{\emph{Step 1: Restrict to dyadic lengths and discrete intervals $(a,b)$.}} 
We first show that it sufficient to assume dyadic lengths $L=2^{N_1}$ for $N_1\in\mathbb N$ and that $(a,b)=(N_2 L^{-d+1+\eps},(N_2+1) L^{-d+1+\eps})$, for  $N_2\in\mathbb Z$ such that $\abs{N_2}\leq2L^{d-1-\eps}$. The restriction to dyadic lengths $L=2^{N_1}$ is valid since it only has potential effect of modifying the implicit constants in the theorem. Now suppose \eqref{e:equi_dist_Bourgain} is satisfied for all such $L$ and $(a,b)$ as described above and suppose we are given another interval $(a',b')$ such that $a',b'$ satisfies $\abs{a'},\abs{b'}\leq 1$ and ${L^{-d+1+2\epsilon}}<b'-a'<L^{-\epsilon}$. Then, by assuming $\delta$ is sufficiently small (depending on $\eps$), and summing over intervals of the form $(N_2 L^{-d+1+\eps},(N_2+1) L^{-d+1+\eps})$ we obtain
\[\sum_{\substack{a'\leq Q(p,q) \leq b'\\ p_j\in I_j, q_j\in J_j \\  p \neq q}}1=  \int_{I_1\times \dots \times I_d}\int_{J_1\times \dots\times J_d}\mathds{1}_{a'\leq Q(x,y)\leq b'}\,dxdy+O(L^{2(d-1)-\eps}(b'-a'))\,.
\]
Thus, by again taking $\delta$ smaller if needed, we obtain Theorem \ref{t:Bourgain} with $\eps$ replaced by $2\eps$, i.e.\ up to a relabeling of $\eps$, we obtain Theorem \ref{t:Bourgain}.

\underline{\emph{Step 2: Ignore intervals that contribute lower order sums.}}  Set $\tilde \delta= 4d\delta$, then by Corollary \ref{r:reduc-1} we have for $\tilde\delta$ sufficiently small,
\begin{equation}\label{e:reduction1}
\sum_{R_{\mathbb{Z}}}1 = \sum_{R_{\mathbb{Z}\tilde\delta}}1 + O\left(L^{2(d-1)^+ - \tilde\delta}(b-a)) \right) + O(L^{(d-2)^+}= \sum_{R_{\mathbb{Z}\tilde \delta}}1 + O\left(L^{2(d-1)^+ - \tilde\delta}(b-a) \right)
\end{equation}
where we have used the restriction of $a-b$ and assumed $\delta $ to be sufficiently small compared to $\epsilon$.

Thus we  restrict our attention to the case where 
\begin{enumerate}[(a), leftmargin =*]
\item \label{cond:1} $\forall p_i\in E_i, \mbox{ and }  \forall q_i\in F_i, \mbox{ we have  } |p_i|> L^{1-\tilde\delta}, |q_i|>L^{1-\tilde \delta}$,
\item \label{cond:2} $\text{distance}(E_i,F_i) > L^{1-\tilde\delta}$.
\end{enumerate}
With this reduction at hand,  we divide each interval into at most $L^{3\tilde\delta}$ intervals, $E_i=\cup_{\alpha}I^\alpha_i$ and $F_i=\cup_{\alpha}J^\alpha_i$   each satisfying
\begin{enumerate}[(a),resume, leftmargin =*]
\item \label{cond:3}
$\frac12 L^{1-3\tilde\delta} \leq \abs{I^\alpha_i},\abs{J^\alpha_i} \leq L^{1-3\tilde\delta}$,
\end{enumerate}
and prove that for intervals $I^{\alpha}_i$ and $J^{\alpha}_i$, satisfying  Conditions \ref{cond:1}, \ref{cond:2}, and \ref{cond:3} we have
\begin{equation}
\sum_{\substack{a\leq Q(p,q) \leq b\\ p_j\in I_j^{\alpha}, q_j\in J^{\alpha}_j \\  p \neq q}}1=  \int_{I^{\alpha}_1\times \dots \times I^{\alpha}_d}\int_{J^{\alpha}_1\times \dots\times J^{\alpha}_d}\mathds{1}_{a\leq Q(x,y)\leq b}\,dxdy+O(L^{2(d-1)-(3d+1)\tilde\delta}(b-a))\,.
\end{equation}
Summing in $\alpha$ and using \eqref{e:reduction1} we have
\begin{align*}
\sum_{\substack{a\leq Q(p,q) \leq b\\ p_j\in I_j, q_j\in J_j \\  p \neq q}}1&= \sum_{\alpha}\left(\int_{I^{\alpha}_1\times \dots \times I^{\alpha}_d}\int_{J^{\alpha}_1\times \dots\times J^{\alpha}_d}\mathds{1}_{a\leq Q(x,y)\leq b}\,dxdy+O(L^{2(d-1-(3d+1)\tilde\delta)}(b-a))\right)\\&\quad+O\left(L^{2(d-1)^+ - 4d\delta}(b-a) \right)\\
&= \sum_{\alpha}\int_{I^{\alpha}_1\times \dots \times I^{\alpha}_d}\int_{J^{\alpha}_1\times \dots\times J^{\alpha}_d}\mathds{1}_{a\leq Q(x,y)\leq b}\,dxdy+O\left(L^{2(d-1)^+ - \tilde\delta}(b-a) \right)\,.
\end{align*}
Using that $\tilde \delta= 4d\delta$ and 
\begin{align*}
&\abs{ \int_{I_1\times \dots \times I_d}\int_{J_1\times \dots\times J_d}\mathds{1}_{a\leq Q(x,y)\leq b}\,dxdy-\sum_{\alpha}\int_{I^{\alpha}_1\times \dots \times I^{\alpha}_d}\int_{J^{\alpha}_1\times \dots\times J^{\alpha}_d}\mathds{1}_{a\leq Q(x,y)\leq b}\,dxdy}\\&\quad\quad\quad\lesssim L^{2(d-1)^+ - \tilde\delta}(b-a) \end{align*}
we conclude \eqref{e:equi_dist_Bourgain}.

Summarizing, if by abuse of notation, we drop the index $\alpha$ and replace $\tilde\delta$ with $\delta$, we have reduced the proof of Theorem \ref{t:Bourgain} to proving the following proposition.
\begin{proposition}\label{p:reduction1}
Fix $\epsilon>0$, then for $\delta>0$ sufficiently small the following statement is true: Suppose $I_j,J_j\subset[-L,L]$, $j=1,\dots,d$ for $d\geq 3$ are intervals satisfying
\begin{enumerate}
\item $\forall p_i\in I_i, \mbox{ and }  \forall q_i\in J_i, \mbox{ we have  } |p_i|> L^{1-\delta}, |q_i|>L^{1- \delta}$.
\item $\mbox{distance}(I_i,J_i) > L^{1-\delta}$.
\item $\frac12 L^{1-3\delta} \leq \abs{I_i},\abs{J_i} \leq L^{1-3\delta}$
\end{enumerate}
Then for $a,b$ satisfying $\abs{a},\abs{b}\leq 1$ and ${L^{-d+1+\epsilon}}<b-a<L^{-\epsilon}$ we have
\begin{equation}\label{e:reduction_asymp}
\sum_{\substack{a\leq Q(p,q) \leq b\\ p_j\in I_j, q_j\in J_j \\  p \neq q}}1=  \int_{I_1\times \dots \times I_d}\int_{J_1\times \dots\times J_d}\mathds{1}_{a\leq Q(x,y)\leq b}\,dxdy+O(L^{2(d-1)-(3d+1)\delta}(b-a))\,.
\end{equation}
\end{proposition}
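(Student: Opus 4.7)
The plan is to follow the Fourier / Poisson-summation strategy of Proposition \ref{th:trivial}, but now pushed down to the much finer window $b-a \geq L^{-d+1+\epsilon}$, exploiting the separation assumptions (1)--(3) on $I_j, J_j$ to gain room when integrating by parts. First I would smooth both cutoffs: replace the product indicator of $I_1\times\cdots\times I_d\times J_1\times\cdots\times J_d$ by a smooth bump $W_L(p/L,q/L)$ which equals one on the product and decays on a scale $L^{-3\delta}$, and replace $\mathbf{1}_{[a,b]}$ by a smooth $h_L$ that extends the window by $L^{-d+1+\epsilon/2}$ on each side with $\|h_L\|_{C^N}\lesssim L^{N(d-1-\epsilon/2)}$. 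The excess contributions from this smoothing are controlled by Corollary \ref{l:lossy_count} combined with the reduction in Corollary \ref{r:reduc-1}, both of which give the needed quasi-count bound on the sliver between smooth and sharp cutoffs; this step uses (1)--(3) exactly to ensure the sliver lives away from the degenerate loci already discarded.

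Next, writing $h_L(Q(p,q)) = \int \hat h_L(s)\, e(s\,Q(p,q))\,ds$ and applying Poisson summation in $(p,q)$, the sum becomes
\[
\mathcal S = L^{2d}\sum_{\ell\in\mathbb Z^{2d}} \int_{\mathbb R} \hat h_L(s) \int W_L(z)\, e\bigl(L^2 Q(z)\,s - L\,\ell\cdot z\bigr)\,dz\,ds.
\]
The $\ell = 0$ contribution is handled as in Proposition \ref{th:trivial}: since $b-a\ll 1$, $\hat h_L(s)$ essentially forces $L^2 Q(z) s$ to concentrate near $\{Q=0\}$, and one recovers the main term on the right-hand side of \eqref{e:reduction_asymp} up to an error of order $L^{2(d-1)-(3d+1)\delta}(b-a)$.

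The serious work is bounding the $\ell \neq 0$ terms. I would split $s$ dyadically. For $|s|\lesssim L^{-2+\delta}|\ell|$, the phase satisfies $|\nabla_z \Phi(z,\ell,s)| \gtrsim L|\ell|$, so integrating by parts $N$ times in $z$ gains a factor $(L^{3\delta}\cdot L|\ell|)^{-N}$ (each derivative on $W_L$ costing $L^{3\delta}$, each on $1/\nabla_z\Phi$ being harmless here), and after summing in $\ell$ and integrating against $\hat h_L$ (whose $L^1$ mass is $\lesssim b-a$), this piece is acceptable. For $|s|\gtrsim L^{-2+\delta}|\ell|$, stationary phase kicks in: the equation $2\beta_i z_i^* s L = \pm \ell_i/L$ (componentwise) locates the critical points, and the nondegenerate Hessian of $Q$ gives an asymptotic expansion whose leading term is a smooth density in $(s,\ell)$ times $|s|^{-d}$. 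The $\ell$--sum of these stationary phase contributions becomes an exponential sum in which the genericity of $\beta$, exactly as exploited in Lemma \ref{linear-count}, rules out approximate solutions of $\beta\cdot n\approx 0$ at scale $L^{-(d-1)^+}$; this is where I would invoke (and adapt) Bourgain's treatment in \cite{Bourgain}. Combining the two ranges of $|s|$ and re-summing across the dyadic decomposition of intervals used in Step 2 of the reduction produces the error term in \eqref{e:reduction_asymp}.

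The principal obstacle is precisely the large-$|s|$ regime, where naive integration by parts in $z$ is useless and one cannot afford to lose any more than $L^{-(3d+1)\delta}(b-a)$. Here one must track, at the quantitative level, how the stationary-phase remainder, the cancellation in the resulting exponential sum in $\ell$, and the Diophantine properties of the $\beta_i$ conspire to deliver the small gain $L^{-c\delta}$; the geometric constraints (1)--(3) on the windows $I_j, J_j$ are essential in ensuring that critical points of $\Phi$ lie in the support of $W_L$ and are well-separated, which is what makes the stationary phase analysis non-trivial but effective. Once this key estimate is in hand, the remainder of the proof is the bookkeeping indicated in Steps 1--2 before the statement of Proposition \ref{p:reduction1}.
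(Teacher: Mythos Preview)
Your strategy diverges from the paper's proof at the essential point, and the divergence is not a matter of taste but of what actually works. The paper does \emph{not} push the Poisson-summation argument of Proposition~\ref{th:trivial} down to the fine window $b-a\sim L^{-d+1+\epsilon}$. Instead, following Bourgain, it takes logarithms (Step~3), converting the constraint $a\le Q(p,q)\le b$ into
\[
\Big|\ln\Big(\sum_{j=1}^{d-1}\beta_j(p_j^2-q_j^2)-\xi\Big)-\ln(p_d^2-q_d^2)-\ln\beta_d\Big|\le A,
\]
and then exploits the factorization $p_d^2-q_d^2=(p_d-q_d)(p_d+q_d)$. After a Fourier transform in this logarithmic variable, the sum $S_2(t)=\sum (p_d^2-q_d^2)^{it}$ decouples into Dirichlet-type sums $\sum m^{it}\sum n^{it}$, to which one applies the Mellin transform and the classical $L^4$ moment bound $\int_0^T|\zeta(\tfrac12+it)|^4\,dt\lesssim T^{1+}$ (Step~10). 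The result for generic $\beta$ is then obtained by averaging $|V|^2$ over $\beta_2$ and $\beta_d$ and a Borel--Cantelli argument (Step~8); the genericity is used in an $L^2$ sense, not through a pointwise Diophantine inequality.

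Your proposal has a genuine gap precisely in the large-$|s|$ regime you flag as the ``principal obstacle.'' After stationary phase, the $\ell\neq 0$ contributions become Gauss-type exponential sums in $\ell$ with irrational quadratic phase depending on $s$; to win you would need square-root cancellation in these sums over a huge range of $|s|$ (up to $L^{d-1}$), and Lemma~\ref{linear-count} --- a counting bound for \emph{linear} Diophantine inequalities --- provides no such cancellation. Nothing in your sketch supplies the analogue of Weyl differencing or minor-arc analysis that would be needed to make the Poisson/circle-method route go through at this scale, and you also omit any averaging over $\beta$, which in the paper is what converts the problem into one amenable to Plancherel and zeta-function moments. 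The logarithmic/Mellin mechanism is not an optional packaging choice; it is the idea that makes the fine-scale estimate tractable.
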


Let us now suppose $I_j$ and $J_j$ satisfy the hypothesis of Proposition \ref{p:reduction1}.

\underline{\emph{Step 3: Transform the region of summation.}}
The sum can be written as,
\begin{equation}\label{e:main_sum}
\sum_{p_i\in I_j, q_j\in J_j}\mathds{1}_{[a,b]}(Q(p,q))=\sum_{p_i\in I_j, q_j\in J_j}\mathds{1}_{\left[0,\frac{b-a}2\right]}\left(Q(p,q)-\frac{a+b}2\right)
\end{equation} 

By writing $I_d=[u-\Delta u,u+\Delta u]$, and  $J_d=[v-\Delta v,v+\Delta v]$, and
utilizing the fact that $|u- v| > L^{1-{\delta}}$, we express  the region $R_{\mathbb{Z}}$ as,
\begin{align*}
\abs{\frac{\sum_{j=1}^{d-1}\beta_j (p_j^2-q_j^2)-\frac {b+a}{2}}{\beta_d(p_d^2-q_d^2)}+1}&\leq \frac{b-a}{2\beta_d\abs{p_d^2-q_d^2}} \; ,\\
&\le \frac{b-a}{2\beta_d\abs{u^2-v^2}}+O\left(\frac{(b-a) L^{-\delta}}{\abs{u^2-v^2}}\right)
\end{align*}
since
\[\abs{p_d^2-q_d^2-u^2+v^2}\les L(\Delta u+\Delta v)\les L^{2-3\delta}\quad\mbox{and}\quad \abs{u^2-v^2}\geq L^{2-2\delta}\,.\]
Setting $\xi= \frac {b+a}{2}$ and  $\eta =  \frac{b-a}{2}$, then by taking logarithms and Taylor expanding $\ln(x)$ around $x=1$ we obtain
\begin{equation}\label{e:real_sum}
\abs{\ln\left(\sum_{j=1}^{d-1}\beta_j (p_j^2-q_j^2)-\xi\right)-\ln\left(p_d^2-q_d^2\right)-\ln \beta_d}\le \frac{\eta}{\beta_d\abs{u^2-v^2}}  + O\left(\frac{\eta L^{-\delta}}{\abs{u^2-v^2}} \right)\,,
\end{equation}
here we assumed, without loss of generality,  $\sum_{j=1}^{d-1}\beta_j (p_j^2-q_j^2) -\xi> 0$ and $p_d^2-q_d^2 > 0$.  

\underline{\emph{Step 4: Replace the sum with an analogous sum.}}

Instead of considering the sum over the region $R_{\mathbb{Z}}$, we will consider the sum over the region $S_{\mathbb{Z}}$, defined as
\begin{equation}\label{e:reduced}
S_{\mathbb{Z}}=\left\{(p,q)\in \prod_{j=1}^d I_j \times \prod_{k=1}^dJ_k: \abs{\ln\left(\sum_{j=1}^{d-1}\beta_j (p_j^2-q_j^2)-\xi\right)-\ln\left(p_d^2-q_d^2\right)-\ln \beta_d}\leq\frac{\eta}{\beta_d\abs{u^2-v^2}}  \right\}
\end{equation}
In order to make this reduction, we need a bound on cardinality of $(p,q)$ satisfying
\[
\abs{\ln\left(\sum_{j=1}^{d-1}\beta_j (p_j^2-q_j^2)-\xi\right)-\ln\left(p_d^2-q_d^2\right)-\ln \beta_d}=\frac{\eta}{\beta_d\abs{u^2-v^2}}  + O\left(\frac{\eta L^{-\delta}}{\abs{u^2-v^2}} \right)\,,
\]
Such a bound would follow as a consequence of a version of a weaker version of Proposition \ref{p:reduction1} with the asymptotic formula \eqref{e:reduction_asymp} replaced with a sharp upper bound, i.e.,
\begin{proposition}\label{p:auxillary}
Fix $\epsilon>0$, then for $\delta>0$ sufficiently small the following statement is true: Suppose $I_j$ and $J_j$ satisfy the hypothesis of Proposition \ref{p:reduction1}, then for $a,b$ satisfying $\abs{a},\abs{b}\leq 1$ and ${L^{-d+1+\epsilon}}<b-a<L^{-\epsilon}$ we have
\begin{equation}\label{e:reduction_asymp-2}
\sum_{\substack{a\leq Q(p,q) \leq b\\ p_j\in I_j, q_j\in J_j \\  p \neq q}}1=  O(L^{2(d-1)-3d\delta}(b-a))\,.
\end{equation}
\end{proposition}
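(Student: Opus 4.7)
The plan is to prove Proposition~\ref{p:auxillary} by the Fourier / Poisson summation method employed in the proof of Proposition~\ref{th:trivial}, exploiting the considerable slack between the target bound $L^{2(d-1)-3d\delta}(b-a)$ and the expected asymptotic count, which on boxes of side length $L^{1-3\delta}$ scales only like $L^{2(d-1)-(6d-3)\delta}(b-a)$. This leaves a polynomial margin of size $L^{(3d-3)\delta}$ within which to absorb off-diagonal Fourier contributions, so one can afford cruder estimates than those required for the sharp asymptotic of Proposition~\ref{p:reduction1}.

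First I would smooth: choose $h\in C_c^\infty(\mathbb{R})$ majorizing $\mathds{1}_{[a,b]}$, supported in $[a-\mu,b+\mu]$ and equal to $1$ on $[a,b]$, with $\mu = L^{-\epsilon'}(b-a)$ for a small $\epsilon'>0$, so that $\|h\|_{C^N}\lesssim\mu^{-N}$ and $|\widehat h(s)|\lesssim (b-a)\min(1,(\mu|s|)^{-N})$. Let $w_j,v_j$ be smooth bumps majorizing $\mathds{1}_{I_j},\mathds{1}_{J_j}$ on slight enlargements, with derivatives costing a factor $L^{-(1-3\delta)}$. The sum in \eqref{e:reduction_asymp-2} is then dominated by
\[
S = \sum_{(p,q)\in\mathbb{Z}^{2d}} W(p)\,V(q)\,h(Q(p,q)),\qquad W(x):=\prod_j w_j(x_j),\ V(y):=\prod_j v_j(y_j).
\]
Poisson summation in $(p,q)$ together with $h(Q)=\int\widehat h(s)\,e(sQ)\,ds$ rewrites this as
\[
S = \sum_{\ell=(m,n)\in\mathbb{Z}^{2d}}\int_{\mathbb{R}}\widehat h(s)\prod_{j=1}^{d}I_j^w(s,m_j)\,I_j^v(s,n_j)\,ds, \quad I_j^w(s,m_j):=\int w_j(x)\,e(s\beta_j x^2 - m_j x)\,dx,
\]
and analogously for $I_j^v$.

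The zero-frequency term $\ell=0$ equals $\int W\,V\,h(Q)\,dx\,dy$, which by the coarea formula along $\nabla Q$ (of size $\gtrsim L$ on the support) is bounded by $L^{-1}\bigl(\prod_{j\geq 2}|I_j|\prod_j|J_j|\bigr)(b-a) \lesssim L^{2(d-1)-(6d-3)\delta}(b-a)$, already comfortably inside the target. For $\ell\neq 0$, each factor $I_j^w(s,m_j)$ has a unique stationary point $x_j^*=m_j/(2s\beta_j)$; the standard combination of stationary phase and repeated integration by parts yields
\[
|I_j^w(s,m_j)| \lesssim \min\!\bigl(|s|^{-1/2},\;(1+L^{1-3\delta}|2s\beta_j u_j - m_j|)^{-N}\bigr),
\]
with the $|s|^{-1/2}$ branch active only when $x_j^*$ lies in a small enlargement of $I_j$, and the analogous bound for $I_j^v$ with $u_j$ replaced by $-v_j$. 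The bound on $\widehat h$ effectively confines the $s$-integration to $|s|\lesssim\mu^{-1}\lesssim L^{d-1-\epsilon/2}$.

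The main obstacle is controlling the sum over nonzero $\ell$ in the \emph{resonant} regime, where all stationary points $x_j^*,y_j^*$ lie simultaneously inside their respective boxes, i.e.\ $m_j\approx 2s\beta_ju_j$ and $n_j\approx -2s\beta_jv_j$ for every $j$. In this regime each integer component is pinned to a window of width $O(|s|L^{1-3\delta})$, so the naive count gives $\lesssim (1+|s|L^{1-3\delta})^{2d}$ resonant $\ell$ per $s$; I would sharpen this count via the Diophantine genericity of $\beta$ (through a variant of Lemma~\ref{linear-count} applied to the linear form $m_j-2s\beta_j u_j$ and its analogue in $n_j$). Combining this resonance count with the $|s|^{-d}$ decay of the product of oscillatory factors, the bound $|\widehat h(s)|\lesssim (b-a)$, and integration over $|s|\lesssim L^{d-1}$, one obtains a total off-diagonal contribution of the form $L^{2(d-1)-C\delta}(b-a)$ with $C>3d$ provided $\delta$ is chosen small relative to $\epsilon$. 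The technical heart of this step is the resonance count, which is exactly the arithmetic input of Bourgain~\cite{Bourgain}; because we only need an upper bound and have $L^{(3d-3)\delta}$ of polynomial margin, we can afford estimates that are significantly less precise than those required for the full asymptotic of Proposition~\ref{p:reduction1}.
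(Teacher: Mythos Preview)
Your approach differs fundamentally from the paper's, and there is a genuine gap in the proposal. In the paper, Proposition~\ref{p:auxillary} is \emph{not} proved by a cheaper variant of the coarse-scale Poisson argument of Proposition~\ref{th:trivial}; rather it is proved simultaneously with Proposition~\ref{p:reduction1} via Bourgain's method: the logarithmic change of variables (Step~3), reduction to the set $S_{\mathbb Z}$ (Step~4), the $S_1/S_2$ Fourier decomposition (Step~5), the scaling split $III+IV$ (Step~6), and then the estimate of $IV$ using averaging in $(\beta_2,\beta_d)$ and the fourth-moment bound for $\zeta(\tfrac12+it)$ (Steps~7--10). For Proposition~\ref{p:auxillary} one simply notes that the main term $III$ is trivially $O(L^{2(d-1)-3d\delta}(b-a))$; the hard work is showing $IV$ is smaller, and that requires the full zeta-function machinery.

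The gap in your proposal is the ``resonance count''. With the naive count $(1+|s|L^{1-3\delta})^{2d}$ resonant frequencies and stationary-phase decay $|s|^{-d}$, the off-diagonal contribution over $|s|\lesssim L^{d-1}$ is of order $(b-a)L^{(d+1)(d-1)+2d(1-3\delta)}$, which for $d=3$ exceeds the target by a factor $\sim L^{10}$; the slack $L^{(3d-3)\delta}$ you identify is a power of $\delta$, not a fixed power of $L$, so it cannot close this. Your proposed sharpenings do not help: Lemma~\ref{linear-count} bounds $\#\{n\in\mathbb Z^d:\beta\cdot n\in[a,b]\}$, but the resonance condition $m_j\approx 2s\beta_j u_j$ involves the continuous parameter $s$ and the box centers $u_j$ (which carry no Diophantine structure), not a linear form in integer variables. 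And ``the arithmetic input of Bourgain'' is not a resonance count for Poisson summation at all---Bourgain's input is precisely the logarithmic/Mellin/zeta-moment framework the paper follows, which sidesteps the dual lattice sum entirely. In short, direct Poisson summation at scales $b-a\ll 1$ transfers the difficulty to a dual problem of the same strength, and there is no known way to close it without essentially reproducing Bourgain's argument.
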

We note that for Proposition \ref{p:reduction1} compared with Proposition \ref{p:auxillary} we may require a stricter smallness criteria on $\delta$ relative to the choice of $\epsilon$. With this in mind, applying Proposition \ref{p:auxillary}, the difference in summing in $p$ and $q$ satisfying \eqref{e:real_sum} and computing the cardinality of $S_{\mathbb Z}$ is of order $O(L^{2(d-1)-(3d+1)\delta}(b-a))$ and hence can be treated as an error. We remark that such arguments will be used later to bound analogous error terms.

By the arguments above, the sum in Proposition \ref{p:auxillary} may be estimated from above by the cardinality of $S_{\mathbb Z}$ with $\eta$ replaced by $2\eta$ in the set's definition. Hence up to a factor of $2$ in the definition of $S_{\mathbb Z}$, to prove both Proposition \ref{p:auxillary} and Proposition \ref{p:reduction1}, it suffices to obtain an asymptotic formula for $S_{\mathbb Z}$.

If we set
\[
F(p,q) =  \ln\left(\sum_{j=1}^{d-1}\beta_j (p_j^2-q_j^2)-\xi\right)-\ln\left(p_d^2-q_d^2\right)-\ln \beta_d, \quad  A = \frac{\eta}{\abs{u^2-v^2}},
\]
then we can rewrite the cardinality of $S_{\mathbb Z}$ as
\begin{align*}
\sum_{S_{\mathbb{Z}}}1 &= \sum_{(p_j,q_j)\in I_i\times J_j}\mathds{1}_{[-A,A]}(F(p,q)) \,.
\end{align*}
{
For a technical reason (as will be seen in Step 7), we replace $\mathds{1}_{[-A,A]}$ by a smooth approximation. Let $\phi:\mathbb R\rightarrow \mathbb R$ be a smooth, non-negative, symmetric Friedrich mollifier, that is monotonically decreasing on $\mathbb R^+$. Setting $\phi_\eps(x)=\eps^{-1}\phi(\frac{x}{
\eps})$. Then, we have
\begin{align}
\sum_{S_{\mathbb{Z}}}1 &= \sum_{(p_j,q_j)\in I_i\times J_j}\left(\mathds{1}_{[-A,A]}*\phi_{L^{-100d}}\right)(F(p,q)) +
 \sum_{(p_j,q_j)\in I_i\times J_j}\left(\mathds{1}_{[-A,A]}-\mathds{1}_{[-A,A]}*\phi_{L^{-100d}}\right)(F(p,q))\nonumber
 \\
 &=I+II\,.
\end{align}
In an analogous argument to showing that the cardinality of $R_{\mathbb Z}$ can well approximated by the cardinality of $R_{\mathbb Z}$, we may show that sum $II$ can be estimated up to an acceptable error.}

\underline{\emph{Step 5: Expressing the sum using Fourier Transform.}}
The number $\#S_{\mathbb{Z}}$ can be expressed using the Fourier transform as follows.  Let
\[
F(p,q) =  \ln\left(\sum_{j=1}^{d-1}\beta_j (p_j^2-q_j^2)-\xi\right)-\ln\left(p_d^2-q_d^2\right)-\ln \beta_d, \quad  A = \frac{\eta}{\abs{u^2-v^2}},
\]
and write
\begin{align*}
I &= \sum_{(p_j,q_j)\in I_i\times J_j}\left(\mathds{1}_{[-A,A]}*\phi_{L^{-100d}}\right)(F(p,q)) =  \sum_{(p_j,q_j)\in I_j\times J_j} \int e^{\boldsymbol{i}F(p,q)t} \widehat{\left(\mathds{1}_{[-A,A]}*\phi_{L^{-100d}}\right)(t)}dt \\[.5em]
&= \int S_1(t)\overline{S_2(t)}e^{-it\ln \beta_d}\widehat{\mathds{1}_{[-A,A]}}\widehat{\phi_{L^{-100d}}}\,dt,
\end{align*}
where, 
\begin{align}
S_1(t)&=\sum_{\substack{p_i\in I_i, q_i\in J_i\\i=1,\dots,d-1}}\left(\sum_{j=1}^{d-1}\beta_j (p_j^2-q_j^2)+\xi\right)^{it}\\
S_2(t)&=\sum_{p_d\in I_d, q_d\in J_d}(p_d^2-q_d^2)^{it}.
\end{align}

\underline{\emph{Step 6: A scaling argument.}}  As mentioned earlier, if $A$ is large compared to $L^{-1}$, then comparing the sum over $S_{\mathbb{Z}}$ and the area of $S$ is relatively simple.  For this reason we split our sum by scaling with a factor $\frac{A}{A_0}$, where $A_0 = \frac{L^{4/3}}{\abs{u^2-v^2}}$, i.e., split the integral into two terms,
\begin{align*}
I&=\frac{A}{A_0}\int S_1(t)\overline{S_2(t)}e^{-it\ln \beta_d}\widehat{\mathds{1}_{[-A_0,A_0]}}\widehat{\phi_{L^{-100d}}} \,dt\\
&\qquad\qquad
 +\int S_1(t)\overline{S_2(t)}e^{-it\ln \beta_d}\left(\widehat{\mathds{1}_{[-A,A]}}-\frac{A}{A_0}\widehat{\mathds{1}_{[-A_0,A_0]}}\right)\widehat{\phi_{L^{-100d}}}\,dt =III+IV\,.
\end{align*}
{Ignoring the factor $\widehat{\phi_{L^{-100d}}}$}, the first integral  is counting  $p,q$ such that
\begin{equation*}
\abs{\ln\left(\sum_{j=1}^{d-1}\beta_j (p_j^2-q_j^2)-\xi\right)-\ln\left(p_d^2-q_d^2\right)-\ln \beta_d}\leq A_0\beta_d^{-1}\,.
\end{equation*}
As in Step 4, the factor $\widehat{\phi_{L^{-100d}}}$ can be ignored, up to a suitable contributing error. Then, one is reduced to counting
\begin{equation*}
\abs{\sum_{j=1}^{d}\beta_j (p_j^2-q_j^2)-\xi}\leq L^{\frac43}+O(L^{\frac43-\delta})\,.
\end{equation*}
Again, applying a similar upper/lower bounding argument to that used in Step 4 with the use of Proposition \ref{p:auxillary} replaced by the use of  Proposition \ref{th:trivial}, we obtain
\[III= \int_{I_1\times \dots \times I_d}\int_{J_1\times \dots\times J_d}\mathds{1}_{a\leq Q(x,y)\leq b}\,dxdy+O(L^{2(d-1)-(3d+1)\delta}(b-a))\,.\]
For the purpose of proving Proposition \ref{p:auxillary}, one simply observes that the first term is of order $O(L^{2(d-1)-3d\delta}(b-a))$. Thus in order to complete the proof of Proposition \ref{p:reduction1}, Proposition \ref{p:auxillary}, and by implication Theorem \ref{t:Bourgain}, it suffices to estimate $IV$.

\underline{\emph{Step 7: Replace $S_2$ with a sum involving smooth cut-offs}} 

We now replace the sum $S_2$ with a sum involving smooth cut-offs. This is a preparatory step, that will be needed for Step 10, in order to apply an argument involving the Mellin transform and Riemann zeta function estimates.

We rewrite $S_2$ in terms of the coordinates  $m=p_d-q_d$, $n=p_d+q_d$ and the set 
\[K:=\{p_d-q_d) \bigm\lvert (p_d,q_d)\in I_d\times J_d\}\,.\]
Then $S_2$ becomes
\begin{align*}
S_2&
= \sum_{p_d,q_d}\mathds{1}_{I_d}(p_d)\mathds{1}_{J_d}(q_d)(p^2_d-q^2_d)^{it}\\
&=\sum_{m,n}\mathds{1}_{[-1,1]}\left(\frac{m+n-2u}{2\Delta u}\right)\mathds{1}_{[-1,1]}\left(\frac{m-n-2v}{2\Delta v}\right)m^{it}n^{it}\\
&=\sum_{m\in K}m^{it}\sum_n \mathds{1}_{[-1,1]}\left(\frac{m+n-2u}{2\Delta u}\right)\mathds{1}_{[-1,1]}\left(\frac{m-n-2v}{2\Delta v}\right)n^{it}\,.
\end{align*}
Without loss of generality, we may assume $\Delta u\leq \Delta v$. Let us cover $K$ by disjoint intervals $M_j$ of length $ L^{1-100d{\delta}}$ and define $w_j$ to be the center of $M_j$. It is not difficult to show that that may be achieved such that $\# \{M_j\}\les L^{100d{\delta}}$ we have the following bound on the set difference
\[
\# \left(\bigcup_k M_j\right)\setminus K\lesssim L^{1-100d{\delta}}\,.
\]
Thus we have
\[\abs{S_2-\sum_j\sum_{m\in M_j}m^{it}\sum_n \mathds{1}_{[-1,1]}\left(\frac{m+n-2u}{2\Delta u}\right)\mathds{1}_{[-1,1]}\left(\frac{m-n-2v}{2\Delta v}\right)n^{it}}\lesssim L^{2-100d{\delta}}\,.\]
Using that $M_j$ is of length $ L^{1-100d{\delta}}$, we may also replace $m$ with the midpoints $w_j$ in order to obtain the estimate
\begin{multline}
\abs{\sum_n\Big(\mathds{1}_{[-1,1]}\Big(\frac{m+n-2u}{2\Delta u}\Big)\mathds{1}_{[-1,1]}\Big(\frac{m-n-2v}{2\Delta v}\Big)- \mathds{1}_{[-1,1]}\Big(\frac{w_j+n-2u}{2\Delta u}\Big)\mathds{1}_{[-1,1]}\Big(\frac{w_j-n-2v}{2\Delta v}\Big)\Big)}\\
\lesssim L^{1-100d{\delta}}\,,\notag
\end{multline}
and hence
\begin{equation*}
S_2=\sum_j\sum_{m\in M_j}m^{it}\sum_n \mathds{1}_{[-1,1]}\left(\frac{w_j+n-2u}{2\Delta u}\right)\mathds{1}_{[-1,1]}\left(\frac{w_j-n-2v}{2\Delta v}\right)n^{it}+O(L^{2-100d{\delta}})\,.
\end{equation*}
Again, up to an allowable error we may also replace the sharp cut-off cutoff functions with a smooth cut-off $\psi \equiv 1$ on $[-1+L^{-100d\delta},1-L^{-100d\delta}]$ and supported on the interval $[-1,1]$, i.e.\
\begin{equation}\label{e:S_2_decomp}
S_2=\sum_j\sum_{m\in M_j}m^{it}\underbrace{\sum_n \psi \left(\frac{w_j+n-2u}{2\Delta u}\right)\psi\left(\frac{w_j-n-2v}{2\Delta v}\right)n^{it}}_{S_j}+O(L^{2-100d{\delta}})\,.
\end{equation}
Finally, the sum in $m$ can be replaced be a sum involving a smooth cut-off, up to an allowable error
\begin{equation}\label{e:S_2_decomp2}
S_2=\underbrace{\sum_{j}\sum_{m}\psi \left(\frac{w_j-m}{L^{1-100d{\delta}}}\right)m^{it}S_j}_{\widetilde S_2}+O(L^{2-100d{\delta}})\,.
\end{equation}
We now decompose $IV$ as
\begin{align*}IV&=
\int S_1(t)\overline{\widetilde S_2(t)}e^{-it\ln \beta_d}\left(\widehat{\mathds{1}_{[-A,A]}}-\frac{A}{A_0}\widehat{\mathds{1}_{[-A_0,A_0]}}\right)\widehat{\phi_{L^{-100d}}}\,dt\\&
+\int S_1(t)\overline{(S_2(t)}-\overline{\widetilde S_2(t)})e^{-it\ln \beta_d}\left(\widehat{\mathds{1}_{[-A,A]}}-\frac{A}{A_0}\widehat{\mathds{1}_{[-A_0,A_0]}}\right)\widehat{\phi_{L^{-100d}}}\,dt= V+VI
\end{align*}
By \eqref{e:S_2_decomp2} we have
\begin{align*}
\abs{VI}\les L^{2d-100d\delta}\int\abs{\left(\widehat{\mathds{1}_{[-A,A]}}-\frac{A}{A_0}\widehat{\mathds{1}_{[-A_0,A_0]}}\right)\widehat{\phi_{L^{-100d}}}}\,dt\,.
\end{align*}
Observe that
\begin{equation}
\abs{\widehat{\mathds{1}_{[-A , A]}}(t)-\frac{A}{A_0}\widehat{\mathds{1}_{[-A_0, A_0]}}(t)}
=  A\abs{\frac{\sin(At)}{At}-\frac{\sin(A_0t)}{A_0t}}
\lesssim \min\left(AA_0^2\abs{t}^2, \frac{A}{1+A\abs{t}}\right)\, .\label{eq:platypus1}
\end{equation}
and for any $N$ we have
\begin{equation}\label{eq:platypus2}
\abs{\widehat{\phi_{L^{-100d}}}}\les \frac{1}{(1+L^{-200d}t^2)^N}\,.
\end{equation}
Thus using that $A\les (b-a)L^{-2+2\delta}$, we have
\begin{equation*}
\abs{VI}\les (b-a)L^{2(d-1)-50d\delta}\,,
\end{equation*}
which is an acceptable error.

\underline{\emph{Step 8: $V$ is an error.}}
 Now consider $V$, we aim to show that
\begin{equation}\label{e:BC_conclusion}
\abs{V}\les  L^{2(d-1)-3d{\delta}}\eta
\end{equation}
for a set of $(\beta_2, \beta_d)$ of full measure, independent of our choice of length $L=2^{N_1}$ and interval $(a,b)=(N_2 L^{-d+1+\eps},(N_2+1) L^{-d+1+\eps})$.   By Chebyshev's inequality, it suffices to show
\[
\norm{V}_{L^2_{\beta_2, \beta_d}}\lesssim L^{2(d-1)-(3d+1){\delta}}{\eta^{\frac32}}.
\] 
To see this, define
\[\Omega_{L,N_2}=\{\beta\in[1,2]^d\bigm\lvert \abs{V}>  L^{2(d-1)-3d{\delta}}\eta\}\,.\]
By Chebyshev's inequality we have
\[\abs{\Omega_{L,N_2}}\les \frac{1}{L^{4(d-1)-6d{\delta}}\eta^2}\norm{V}_{L^2_{\beta_2, \beta_d}}^2\les L^{-2\delta}\eta\,.\]
Recall that $\eta=L^{-d+1+\eps}$, then, since
\[\bigcap_{N_1\geq M,~\abs{N_2}\leq 2L^{d-1-\eps}}\abs{\Omega_{2^{N_1},N_2}}\geq 1-C\sum_{j=N}^{\infty}2^{-2j\delta}=1-C\frac{4^{\delta(1-N)}}{4^{\delta}-1}\rightarrow 1\quad\mbox{as }M\rightarrow \infty\,.\]
 we obtain \eqref{e:BC_conclusion} for a set of $(\beta_2, \beta_d)$ of full measure, where the implicit  constant depends on $(\beta_2, \beta_d)$.

Applying \eqref{eq:platypus1} and \eqref{eq:platypus2} we have
\[
\abs{\left(\widehat{\mathds{1}_{[-A , A]}}-\frac{A}{A_0}\widehat{\mathds{1}_{[-A_0, A_0]}}\right)\widehat{\phi_{L^{-100d}}}}
\lesssim \min\left(AA_0^2\abs{t}^2, \frac{A}{1+A\abs{t}}\right)\, .
\]

Averaging in $\beta_2$ and $\beta_d$, and using Plancherel's theorem for the integral in $\beta_d$, we have from the bounds $A=\eta L^{-2+2{\delta}}$ and $A_0=  L^{-\frac{2}{3}+2{\delta}}$
\begin{align*}
\norm{V}_{L^2_{\beta_2,\beta_d}}^2&\lesssim A^2\left(A_0^4\int_{\abs{t}\leq L^{{\frac{1}{100}}}} t^4\norm{S_1}_{L^2_{\beta_2}}^2\abs{\tilde S_2}^2\,dt+\int_{\abs{t}\geq L^{{\frac{1}{100}}}}\frac{1}{1+A^2t^2}\norm{S_1}_{L^2_{\beta_2}}^2\abs{\tilde S_2}^2\,dt\right)\\
&\lesssim \eta^2L^{\frac{-20}{3}+12{\delta}}\int_{\abs{t}\leq L^{{\frac{1}{100}}}} t^4\norm{S_1}_{L^2_{\beta_2}}^2\abs{\tilde S_2}^2\,dt
+\eta^2L^{-4+4{\delta}}\int_{\abs{t}\geq L^{{\frac{1}{100}}}}\frac{1}{1+\eta^2 L^{-4}t^2}\norm{S_1}_{L^2_{\beta_2}}^2\abs{\tilde S_2}^2\,dt\\
&\lesssim\underbrace{ \eta^2L^{\frac{-8}{3}+6{\delta}}\int_{\abs{t}\leq L^{{\frac{1}{100}}}} t^4\norm{S_1}_{L^2_{\beta_2}}^2\,dt}_{VII}
+\underbrace{\eta^2L^{-4+4{\delta}}\int_{\abs{t}\geq L^{{\frac{1}{100}}}}\frac{1}{1+\eta^2 L^{-4} t^2}\norm{S_1}_{L^2_{\beta_2}}^2\abs{\tilde S_2}^2\,dt}_{VIII}
\end{align*}
where we have used the trivial bound $\# \tilde S_2\leq \# I_d \#J_d \leq L^{2-6\delta}$.

\underline{\emph{Step 9: Bounding  $VII$.}}  To bound $\norm{S_1}_{L^2_{\beta_2}}$, we rewrite
\begin{align*}
\abs{S_1(t)}^{2} &=\sum_{\substack{p_i\in I_i, q_i\in J_i\\i=1,\dots,d-1}}\sum_{\substack{r_j\in I_j, s_j\in J_j\\j=1,\dots,d-1}}\left(\sum_{j=1}^{d-1}\beta_j (p_j^2-q_j^2)-\xi\right)^{it}\left(\sum_{j=1}^{d-1}\beta_j (r_j^2-s_j^2)-\xi\right)^{-it}\\
&=\sum_{\substack{p_i\in I_i, q_i\in J_i\\i=1,\dots,d-1}}\sum_{\substack{r_j\in I_j, s_j\in J_j\\j=1,\dots,d-1}}(p_1^2-q_1^2+\beta_2(p_2^2-q_2^2)+\psi_1)^{it}(r_1^2-s_1^2+\beta_2(r_2^2-s_2^2)+\psi_2)^{-it}\\
&= \sum_{\substack{p_i\in I_i, q_i\in J_i\\i=1,\dots,d-1}}\sum_{\substack{r_j\in I_j, s_j\in J_j\\j=1,\dots,d-1}} e^{it\left(\left(\ln \left(p_1^2-q_1^2+\beta_2(p_2^2-q_2^2)+\psi_1\right)-\ln \left(r^2_1-s_1^2+\beta_2(r_2^2-s_2^2)+\psi_2\right)\right)\right)}
\end{align*}
where
\[ \psi_1 :=\sum_{j=3}^{d-1}\beta_j (p_j^2-q_j^2)-\xi\quad\mbox{and}\quad \psi_2 =\sum_{j=3}^{d-1}\beta_j (r_j^2-s_j^2)-\xi\]
for $d>3$ or $ \psi_1= \psi_2=\xi$ for the case $d=3$. Setting
\[\phi:=\ln \left(p_1^2-q_1^2+\beta_2(p_2^2-q_2^2)+\psi_1\right)-\ln \left(r^2_1-s_1^2+\beta_2(r_2^2-s_2^2)+\psi_2\right) \]
we have
\begin{align*}
\abs{\partial_{\beta_2}\phi}&=\abs{\frac{p_2^2-q_2^2}{p^2_1-q_1^2+\beta_2(p_2^2-q_2^2)+\psi_1}-\frac{r_2^2-s_2^2}{r^2_1-s_1^2+\beta_2(r_2^2-s_2^2)+\psi_2}}\\ 
&\geq\abs{\frac{(p_2^2-q_2^2)(r^2_1-s_1^2+\psi_2)-(r_2^2-s_2^2)(p^2_1-q_1^2+\psi_1)}{L^4}},
\end{align*}
then for $t\leq L^4$, and by taking the sup over indices  $3\le i\le d-1$, we have
\[
\int \abs{S_1(t)}^{2}\,d\beta_2 \lesssim \sup_{\psi_1,\psi_2} L^{2(d-3)}\sum_{\substack{p_i\in I_i, q_i\in J_i\\r_i\in I_i, s_i\in J_i \\i=1,2}}
\left(1+|t|\inf_{\beta_2}|\partial_{\beta_2}\Psi|\right)^{-1}.
\]
Here we a using the trivial bound for the case $1\geq |t|\inf_{\beta_2}|\partial_{\beta_2}\Psi|$, otherwise we use Van der Corput's Lemma (see for example \cite{Stein} Chapter 8, Proposition 2). For the former case, to apply the proposition, we split the integral into regions for which $\partial_{\beta_2}\Phi$ is monotonic in $\beta_2$.

Set $(p_i-q_i)(p_i+q_i)=w_i$ and $(r_i-s_i)(r_i+s_i)=z_i$, and sum over fixed $w_i$ and $z_i$ using the divisor bound $d(k)\lesssim_\epsilon |k|^\epsilon$, we obtain
\[
\int \abs{S_1(t)}^{2}\,d\beta_2
\lesssim \sup_{\psi_1,\psi_2} L^{2(d-3)^+} \sum_{L^{2-2{\delta}}\leq\abs{w_i},\abs{z_i}\leq L^2}\left(1+\frac{\abs{t}}{L^4}\abs{w_2(z_1+\psi_2)-z_2(w_1+\psi_1)}\right)^{-1}
\]
The above sum can rearranged by summing first over the set,
\[
\mathscr{A}_{\psi}(k,w_2,z_2) =\{L^{2-2\delta}\leq\abs{w_1},\abs{z_1}\leq L^2\bigm\lvert \floor[\big]{\abs{w_2(z_1+\psi_2)-z_2(w_1+\psi_1)}}=k  \},
\]
and then over $(k,w_2,z_2)$ to obtain,
\begin{align*}
\int \abs{S_1(t)}^{2}\,d\beta_2
&\lesssim  \sup_{\psi_1,\psi_2} L^{2(d-3)^+}
\sum_{\substack{0 \le k \lesssim L^2\\ L^{2-2\delta}\leq\abs{w_2},\abs{z_2} \le L^2}}
\#\mathscr{A}_{\psi}(k,w_2,z_2)\frac{L^4}{L^4+\abs{t}k}\\
& \lesssim  \sup_{\psi_1,\psi_2} L^{2(d-3)^+}  
\sum_{L^{2-2\delta}\leq\abs{w_2},\abs{z_2} \le L^2}
\max_k\# \mathscr{A}_{\psi}(k,w_2,z_2)\left(1+\frac{L^{4^+}}{\abs{t}}\right)
\end{align*}
Now we estimate $\#\mathscr{A}_{\psi}(k,w_2,z_2)$ for a fixed $(k,w_2,z_2)$. Assume  $\mathscr{A}_{\psi}(k,w_2,z_2) \neq \emptyset$, then there exists $w_0$ and $z_0$, such that, $L^{2-2{\delta}}\leq \abs{w_0-\psi_2}\leq L^2$ and $L^{2-2\delta}\leq \abs{z_0-\psi_1}\leq L^2$ and
\[\left[\abs{w_2(z_0)-z_2(w_0)}\right]= k\,.\]
Thus 
\begin{align*}
\#\mathscr{A}_{\psi}\lesssim \#\{w_2\widetilde z_1=z_2\widetilde w_1 \bigm\lvert
\abs{\widetilde w_1-w_0},\abs{\widetilde z_1-z_0}\leq L^2\} =\# \{w_1= \frac{w_2\widetilde z_1}{z_2} \bigm\lvert
\abs{\widetilde w_1-w_0},\abs{\widetilde z_1-z_0}\leq L^2\}
\end{align*}
Since $\widetilde w_1\in \mathbb Z$  then $\#\mathscr{A}_{\psi}\lesssim 1+\frac{L^2 \gcd(w_2,z_2)}{z_2}$, and consequently
\begin{align*}
\int \abs{S_1(t)}^{2}\,d\beta_2& \lesssim L^{2(d-3)^+}\left(1+\frac{L^{4^+}}{\abs{t}}\right)\sum_{L^{2-2\delta}\leq\abs{w_2},\abs{z_2}\leq L^2}\left(1+\frac{L^2 \gcd(w_2,z_2)}{z_2}\right)\\
& \lesssim L^{2(d-3)^+}\left(1+\frac{L^{4^+}}{\abs{t}}\right)\left(\sum_{L^{2-2\delta}\leq\abs{w_2},\abs{z_2}\leq L^2}1+\sum_{\substack{L^{2-2{\delta}}\leq\abs{w_2},\abs{z_2}\leq L^2\\\gcd(w_2,z_2)\neq 1}}L^{2^+}\right)\\
& \lesssim L^{2(d-1)+{\delta}}\left(1+\frac{L^{4}}{\abs{t}}\right)\,.
\end{align*}

Hence, applying this bound to $VII$ yields
\begin{align}
VII \lesssim  \eta^2L^{\frac{-8}{3}+6{\delta}}\int_{\abs{t}\leq L^{{\frac{1}{100}}}} \abs{t}^3L^{2(d+1+{\delta})}\,dt \lesssim {\eta^2 L^{4(d-1)} L^{-2d+\frac{10}{3}+7\delta+\frac{1}{25}}\les \eta^3 L^{4(d-1)}}\,.\notag
\end{align}
where we used that $\eta=L^{-d+1+\eps}$, $\delta$ is sufficiently small and $d\geq 3$.

\underline{\emph{Step 10: Bounding $VIII$.}} Now consider $VIII$, we have
\begin{align}
VIII&\lesssim  \eta^2L^{2(d-3)+5{\delta}}\int_{\abs{t}\geq L^{\frac{1}{100}}}\frac{\abs{t}+L^4}{1+\eta^2 L^{-4}t^2}\frac{1}{\abs{t}}\abs{\tilde S_2}^2\,dt\notag\\
&\lesssim  \eta^2L^{2(d-3)+5{\delta}}\sup_{\kappa\geq L^{\frac{1}{100}}}\frac{\kappa^{1+}+L^4\kappa^{0+}}{1+\eta^2 L^{-4}\kappa^2}
\int_{\abs{t}\geq L^{\frac{1}{100}}}\frac{1}{\abs{t}^{1+}}\abs{\tilde S_2}^2\,dt\notag\\
&\lesssim  \eta^2L^{2(d-3)+5{\delta}}\left(\frac{L^{2+}}{\eta}+L^{4+}\right)\left(\sup_{k}L^{-\frac1{100}}2^{-k}\int_{L^{\frac{1}{100}}2^k}^{L^{\frac{1}{100}}2^{k+1}}\abs{\tilde S_2}^2\,dt\right)\,.\label{e:IV_est}
\end{align}

We proceed to estimate $\abs{S_j}$, defined in \eqref{e:S_2_decomp}. Defining
\[\chi(z)=\psi\left(\frac{w_j-2u}{2\Delta u} +z\right)\psi\left(\frac{w_j-2v}{2\Delta v} -\frac{z\Delta u}{\Delta v}\right)\,\]
and letting $\hat\chi$ denote Mellin transform of $\chi$, then
\[S_j^2=\left(\frac{1}{2\pi i}\int_{\Re s=2}\hat \chi(s) (2\Delta u)^s\zeta (s-it) ds\right)^2\,.\]
Shifting the contour to $\Re s=\frac{1}{2}$ we pick up the residue
\[\hat \chi(1+it)(2\Delta u)^{1+it}\,,\]
which for $\abs{t}\geq L^{\frac{1}{3}}$ is order $O(L^{-N})$ for any $N$ due to the decay of $\hat \chi$ and that $\Delta u$ and $\Delta v$ are of comparable size.
Then using $\Delta u\sim L^{1-3{\delta}}$ 
\begin{align*}
|S_j|^2&=\left|\frac{1}{2\pi}\int_{\Re s=\frac{1}{2}}\hat \chi(s) (2\Delta u)^s\zeta (s-it) ds\right|^2+O(1)\\
&\lesssim L^{1-3{\delta}}\left|\int_{\Re s=\frac{1}{2}}\hat \chi(s)\zeta (s-it) ds\right|^2+O(1)=L^{1-3{\delta}}\left|\hat \chi\left(\frac 12+i\cdot \right)* \zeta \left(\frac 12-i\cdot\right)(t)\right|^2+O(1)\,.
\end{align*}
Again, using the rapid decay of $\hat \psi$, we have
\begin{align*}
|S_j|^2
&\lesssim L^{1-3{\delta}}\left|\left(\mathds{1}_{[-L^{100d{\delta}},L^{100d{\delta}}]}(\cdot)\hat \chi\left(\frac 12+\cdot \right)\right)* \zeta \left(\frac 12-i(\cdot)\right)(t)\right|^2+O(1)\,.
\end{align*}
We now utilize following classical $L^4$ bound of the zeta function in the critical strip \cite{HB79}
\[\frac{1}{T}\int_0^T\abs{\zeta\left(\frac12 -it\right)}^4\,dt\les T^{0+}\,.\]
Using the above bound yields
\begin{align*}
&\norm{\left(\mathds{1}_{[-L^{100d{\delta}},L^{100d{\delta}}]}(\cdot)\hat \chi\left(\frac 12+i\cdot \right)\right)* \zeta \left(\frac 12-i\cdot\right)}^4_{L^4([L^{\frac1{100}}2^k,L^{\frac1{100}}2^{k+1}])}\\
&\qquad\lesssim \norm{\hat \chi}_{L^{\infty}}\norm{\zeta \left(\frac 12+i(\cdot)\right)}_{L^4([L^{\frac1{100}}2^k-L^{100d{\delta}},L^{\frac1{100}}2^{k+1}+L^{100d{\delta}}])}^4\\
&\qquad\lesssim L^{\frac1{100}+\delta}2^{k}
\end{align*}
Thus we obtain
\[
\norm{S_j}_{L^4([L^{\frac1{100}}2^k,L^{\frac1{100}}2^{k+1}])}^4\lesssim { L^{2+\frac1{100}-5\delta}2^{k}}\,. 
\]
An analogous argument also yields
\[\norm{\sum_{m}\psi \left(\frac{w_j-m}{L^{1-100d{\delta}}}\right)m^{it}}_{L^4([L^{\frac1{100}}2^k,L^{\frac1{100}}2^{k+1}])}^4\lesssim   { L^{2+\frac1{100}-5\delta}2^{k}}\,. \]
Using the decomposition \eqref{e:S_2_decomp} and the bound $\# \{M_j\}\les L^{100d{\delta}}$, we have
\begin{align*}
\norm{\tilde S_2}^2_{L^2([L^{\frac1{100}}2^k,L^{\frac1{100}}2^{k+1}])}
{\les L^{2+\frac1{100}+100\delta}2^{k}}\,.
\end{align*}
Thus, combining the above estimate on $S_2$ with \eqref{e:IV_est}, we obtain
\begin{align*}
VIII&\lesssim  \eta^2L^{2(d-3)+5{\delta}}\left(\frac{L^{2+}}{\eta}+L^{4+}\right)\left(\sup_{k}L^{-\frac1{100}}2^{-k}L^{2+\frac 1{100}+100d{\delta}}2^{k}\right)\\
&\lesssim  \eta^2L^{2(d-2)+(5+100d){\delta}}\left(\frac{L^{2+}}{\eta}+L^{4+}\right)\\
&\lesssim \eta^2L^{4(d-1)}L^{-2d+2+200d\delta}\eta^{-1}
\end{align*}
where we used that $\eta=L^{-d+1+\eps}>L^{2}$ since $d\geq 3$. Thus assuming $\delta$ to be sufficiently small, then 
\[L^{-2d+2+200d\delta}\leq L^{2d+2+\eps-3d\delta}=\eta^2L^{-3d\delta}\,,\]
 and hence
\[VIII\leq  L^{2(d-1)-3d{\delta}}\eta\,,\]
as desired.

 \subsection{Proof of Theorem \ref{t:asymptotic formula}}. First we note that the sum in Theorem  \ref{t:asymptotic formula} can be simplified  as follows,
\begin{enumerate}[1), leftmargin =*]

 
 \item Ignore all pairs $(p,q)$  such that $\abs{p_j}=\abs{q_j}$ for each $j$. The sum of such pairs such that $\abs{p},\abs{q}\leq L^{1+\delta}$ is of order O($t^2L^{d(1+\delta)})$ and hence contributes to an admissible error, where here we used the restriction $t\leq L^{d -\epsilon}$.
 
 \item We restrict the sum to the positive sector $p,q\in\mathbb Z^d_+\cap [0,L^{1+\delta}]$ for $p\neq q$. Here we are using that the subset of  $(p,q)$ such that $p_j=0$ or $q_j=0$  for some $j$ is an admissible error. This follows as a consequence of Lemma \ref{l:small-range}. To rigorously carry out such an estimate, one must split the contributions when $\abs{Q(p,q)}\leq \mu^{-1}$ and $\abs{Q(p,q)}> \mu^{-1}$. Assuming without of loss of generality that $p_1=0$, then splitting up the later part dyadically in the size of $\abs{Q(p,q)}$ and using $\abs{g(x)}\les \frac{1}{\abs{x}^2}$ one obtains the estimate
  \begin{align*}
 t^2 \sum_{ \substack{(p,q)\in\mathbb Z_+^{2d}\\ p_1=0,~p\neq q}}\abs{W\left(\frac{p}{L},\frac{q}{L}\right)g(\mu Q(p,q))}&\les t^2\left(\frac{L^{2(d-1)^+ -2\delta}}{\mu}+ L^{(d-1)^+}\right)\les tL^{2d-\delta}\, ,
  \end{align*}

 \end{enumerate}
 where $W$ was defined in \eqref{eq:W}.

  With all these reductions in mind, proving Theorem \ref{t:asymptotic formula} will follow as a consequence of the following theorem.
 
\begin{theorem}[Equidistribution]\label{th:eqdist}
Fix $\epsilon>0$ and let $\delta>0$ be sufficiently small. Then for generic $\beta\in [1,2]^d$, we have that  for any function $W \in  \mathscr{S}(\mathbb{R}^d)$,  the following holds,
\begin{align*}
\sum_{\substack{(p,q)\in  \mathbb Z_+^{2d}\\\ p\neq q}}W\left(\frac{p}{L},\frac{q}{L}\right)g(\mu Q(p,q))&=L^{2d}\int\limits_{\mathbb R_+^{2d}}W(x,y)g(L^2\mu Q(x,y))\,dxdy+O\left(\frac{L^{2(d-1)-\delta}}{\mu}\right)
\end{align*}
where $1< \mu\leq L^{d-1 -\epsilon}$. 
\end{theorem}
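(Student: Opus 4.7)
The plan is to reduce the theorem to the interval-counting Theorem \ref{t:Bourgain} of Bourgain via a spatial partition of unity and a dyadic decomposition of $g$ in the $\mu Q$ variable. I will describe the four main steps in order.

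\textbf{Step 1 (Spatial reduction).} Since $W$ is Schwartz, truncating the sum to $|p|,|q|\leq L^{1+\delta_0}$ for a tiny $\delta_0$ costs only a negligible error. Introduce a smooth partition of unity $W = \sum_\alpha W_\alpha$ subordinate to axis-aligned boxes of side length $L^{1-3\delta}$ in the $(p,q)$-variables. Boxes touching the axes or the diagonal region (where some $|p_i|$, $|q_i|$, or $|p_i - q_i|$ is below $L^{1-\delta}$) contribute only admissible errors by the trivial upper bounds in Corollary \ref{l:lossy_count} and Lemmas \ref{l:small-dia}, \ref{l:small-range}. On the remaining boxes the intervals $I_j^\alpha$, $J_j^\alpha$ supporting $W_\alpha$ satisfy the three hypotheses of Proposition \ref{p:reduction1}, so Theorem \ref{t:Bourgain} is applicable.

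\textbf{Step 2 (Dyadic decomposition of $g$).} With a smooth dyadic partition $1 = \sum_{j\geq 0}\chi_j$ where $\chi_j$ localises in $|\mu Q|\sim 2^j$, write $g(\mu Q) = \sum_{j\geq 0} h_j(\mu Q)$ with $h_j := g\chi_j$. Then $\|h_j\|_\infty\lesssim 2^{-2j}$ and the total variation on $[2^j,2^{j+1}]$ is $\int|h_j'|\lesssim 2^{-j}$. The tail $j>\epsilon_0\log L$ gives a negligible contribution from the rapid decay of $g$, so it suffices to treat $0\leq j\leq \epsilon_0\log L$.

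\textbf{Step 3 (Bourgain on each level + Abel summation).} On box $\alpha$, replace $W_\alpha$ by its value at a center point, the Lipschitz remainder contributing a loss of $L^{-3\delta}$ times the sum. Letting $N_\alpha(u) := \#\{(p,q)\in\mathrm{box}_\alpha : \mu Q(p,q)\leq u\}$ and $M_\alpha$ its continuous counterpart, write
\[
T_{\alpha,j} - T_{\alpha,j}^{\mathrm{cont}} = -\int h_j'(u)\bigl[(N_\alpha-M_\alpha)(u)-c_\alpha\bigr]\,du
\]
for any constant $c_\alpha$. Choose $c_\alpha = (N_\alpha-M_\alpha)(u_0)$ for a convenient reference $u_0$ in the dyadic support. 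For $|u-u_0|\geq \mu L^{-d+1+\epsilon}$, Theorem \ref{t:Bourgain} gives $|(N_\alpha-M_\alpha)(u)-c_\alpha|\lesssim L^{2(d-1)-(3d+1)\delta}|u-u_0|/\mu$; for smaller $|u-u_0|$ the contribution is handled using the upper bound in Corollary \ref{l:lossy_count} together with $|h_j'|\lesssim 2^{-2j}$. Combining with $\int_{|u-u_0|\lesssim 2^j}|h_j'(u)|\,|u-u_0|\,du\lesssim 1$ (per dyadic level) yields an error $O(L^{2(d-1)-(3d+1)\delta}/\mu)$ per box and per dyadic level.

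\textbf{Step 4 (Reassembly).} Summing over the $O(\log L)$ dyadic levels and the $O(L^{6d\delta})$ boxes, and taking $\delta$ small enough relative to $\epsilon$, the cumulative error is $O(L^{2(d-1)-\delta'}/\mu)$ for some $\delta'>0$, while the leading contributions reassemble into $L^{2d}\int W(x,y)g(L^2\mu Q(x,y))\,dx\,dy$, giving the theorem.

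\textbf{Main obstacle.} The hardest point is the borderline regime $\mu\sim L^{d-1-\epsilon}$ and $j=0$, where Bourgain's theorem barely applies: the small-width window $|u-u_0|<\mu L^{-d+1+\epsilon}$ then occupies an $O(1)$ fraction of the dyadic level and lies just outside the range of the quantitative equidistribution. Extracting a genuine $L^{-\delta'}$ gain there requires a careful choice of the reference point $u_0$, for instance by averaging it over a short window so that the crude upper count on the small-width range inherits the Bourgain loss factor, and coordinating the exponents of the partition-of-unity scale $L^{-3\delta}$, the Bourgain loss $L^{-(3d+1)\delta}$, and the box count $L^{6d\delta}$ so that the composite gain remains strictly positive.
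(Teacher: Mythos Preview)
Your outline is essentially the paper's argument, but with two self-inflicted complications that generate the ``main obstacle'' you describe.

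The paper proceeds exactly as you do in Steps~1 and~4: it partitions into cubes of side $\sim L^{1-c\delta}$, freezes $W$ on each cube, and reassembles. (Incidentally, your discarding of near-axis and near-diagonal boxes in Step~1 is redundant: Theorem~\ref{t:Bourgain} as stated applies to \emph{arbitrary} intervals of length $\geq L^{1-\delta}$, and the reduction to the separated case of Proposition~\ref{p:reduction1} is done inside its proof.) The substantive difference is in your Steps~2--3. The paper does not dyadically decompose $g$, and it does not introduce a reference point $u_0$. Instead it packages the whole integration-by-parts step into a single auxiliary lemma (Lemma~\ref{e:count_with_g}): on each box one writes $\sum g(\mu Q) = -\mu\int_0^{\mu^{-1}L^{c\delta}} g'(\mu y)\,A(y)\,dy + \text{bdry}$ with the \emph{symmetric} count $A(y):=\#\{|Q(p,q)|\leq y\}$, and then splits the $y$-integral once. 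For $y$ above the Bourgain threshold, Theorem~\ref{t:Bourgain} gives $|A(y)-\tilde A(y)|\lesssim L^{2(d-1)-c\delta}y$; for $y$ below it, one simply uses the \emph{absolute} upper bound $A(y)+\tilde A(y)\lesssim L^{(d-2)^+}$ from Corollary~\ref{l:lossy_count} (the improved bound valid for $|a|,|b|\leq 1$), integrated over a window of length $\lesssim L^{-d+1+\epsilon}$. This gives an acceptable error immediately.

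Your reference-point trick runs into trouble precisely because you are bounding a \emph{difference} $(N_\alpha-M_\alpha)(u)-(N_\alpha-M_\alpha)(u_0)$ on the short window, and the best available bound on such a short-interval count is $L^{2(d-1)^+}|u-u_0|/\mu + L^{(d-2)^+}$, whose first term is of size $L^{d-1+\epsilon}$ at the window edge --- one power of $L$ too large. The paper avoids this by anchoring at $y=0$, where $A(0)=0$, so the short-window bound is governed by the \emph{second} term $L^{d-2}$ alone. In other words, the averaging-of-$u_0$ fix you propose is unnecessary: take $u_0=0$ with the symmetric count and the obstacle disappears. The dyadic decomposition of $g$ is then also superfluous, since the single bound $|g'(\mu y)|\lesssim 1$ near $y=0$ and the tail estimate of Lemma~\ref{l:large_K} (which uses $|g(x)|\lesssim x^{-2}$) suffice.
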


We remark that the above theorem is actually stronger than required: in view of the restriction on $t$ in the hypothesis of Theorem \ref{t:asymptotic formula}, we need only consider $\mu$ within the range $0<\mu\leq L^{d-2 -\epsilon}$.

Before we prove Theorem  \ref{th:eqdist}, we will need a couple of auxiliary lemmas. The following lemma is helpful in bounding errors to the asymptotic formula.
\begin{lemma}\label{l:large_K}
Let $\epsilon>0$. Given a generic quadratic from $Q(p,q)$
as defined in \eqref{qpq}, we have the following estimate
\begin{equation} \label{e:large_Q_ineq}
\sum_{\substack{(p,q)\in \mathbb Z^{2d}\cap[0,L]^{2d}\\ p\neq q, \abs{Q(p,q)}\geq a}}\frac{1}{Q(p,q)^2}\lesssim 
\frac{L^{(2d-2)+}}{a}\,.
\end{equation}
for $a\geq L^{-d+\epsilon}$
\end{lemma}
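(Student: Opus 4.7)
The plan is to estimate the sum by a dyadic decomposition in the size of $|Q(p,q)|$, then apply the cardinality bounds from Corollary \ref{l:lossy_count} to each dyadic shell. Concretely, since $(p,q)\in[0,L]^{2d}$ forces $|Q(p,q)|\lesssim L^2$, and since $a\geq L^{-d+\epsilon}$ is the lower cutoff, the relevant dyadic range is $k_0 \leq k \lesssim 2\log_2 L$ where $2^{k_0}\sim a$. Writing $N_k := \#\{(p,q)\in\mathbb{Z}^{2d}\cap[0,L]^{2d} : p\neq q,\; 2^k\leq |Q(p,q)| < 2^{k+1}\}$, we bound
\[
\sum_{\substack{(p,q)\\ p\neq q,\ |Q(p,q)|\geq a}}\frac{1}{Q(p,q)^2} \;\lesssim\; \sum_{k\geq k_0} 2^{-2k} N_k.
\]

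For the factor $N_k$ I would split into two regimes depending on whether $2^k$ is small or large, in order to use the sharper of the two cardinality bounds from Corollary \ref{l:lossy_count}. When $2^k\leq 1$, the improved estimate \eqref{e:lossy_count} applies and gives $N_k \lesssim L^{2(d-1)^+}\,2^k + L^{(d-2)^+}$. When $2^k\geq 1$, I use the weaker estimate \eqref{e:lossy_lossy_count}, yielding $N_k\lesssim L^{2(d-1)^+}\,2^k + L^{(d-1)^+}$.

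The dominant contribution comes from the first term in each of these bounds: $\sum_{k\geq k_0} 2^{-2k}\cdot L^{2(d-1)^+}\,2^k = L^{2(d-1)^+}\sum_{k\geq k_0} 2^{-k} \lesssim L^{2(d-1)^+}/a$, which is exactly the advertised bound. It remains to verify that the ``error'' terms $L^{(d-2)^+}$ and $L^{(d-1)^+}$ produce admissible contributions. In the small shells $2^k\in [a,1]$ we obtain $L^{(d-2)^+}\sum 2^{-2k}\lesssim L^{(d-2)^+}/a^2$, which is $\lesssim L^{2(d-1)^+}/a$ precisely when $a\gtrsim L^{-d+}$; this is guaranteed by the hypothesis $a\geq L^{-d+\epsilon}$. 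In the large shells $2^k\in[1,L^2]$ we get $L^{(d-1)^+}\sum_{k\geq 0} 2^{-2k}\lesssim L^{(d-1)^+}$, which is clearly dominated by $L^{2(d-1)^+}/a$ for any $a\lesssim 1$.

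The argument is essentially mechanical once the two cardinality bounds in Corollary \ref{l:lossy_count} are in hand; the only mildly delicate point is the balance in the small-shell regime, where the use of the improved bound \eqref{e:lossy_count} (which reduces the additive error from $L^{(d-1)^+}$ to $L^{(d-2)^+}$) is crucial for handling the full range $a\geq L^{-d+\epsilon}$ rather than only $a\gtrsim L^{-(d-1)}$. The subpolynomial $L^{0^+}$ losses inherent in the counting estimates are absorbed into the final $L^{(2d-2)^+}$ factor.
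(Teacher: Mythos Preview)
Your proof is correct and follows the same dyadic-decomposition strategy as the paper. The paper's own proof is terser: it invokes the counting bound, writes $\#R_{\mathbb{Z}^{(m)}}\lesssim L^{(2d-2)^+}2^m$ in one line, and sums the resulting geometric series, leaving the absorption of the additive error terms implicit. Your version makes this explicit by splitting into the regimes $2^k\le 1$ and $2^k\ge 1$ so as to invoke \eqref{e:lossy_count} and \eqref{e:lossy_lossy_count} respectively, and then checking that the additive errors $L^{(d-2)^+}/a^2$ and $L^{(d-1)^+}$ are each dominated by $L^{2(d-1)^+}/a$ under the hypothesis $a\ge L^{-d+\epsilon}$. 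One small remark: your phrase ``for any $a\lesssim 1$'' in the large-shell estimate is slightly too restrictive as written; but if $a\ge 1$ there are no small shells and the large-shell error becomes $L^{(d-1)^+}\sum_{2^k\ge a}2^{-2k}\lesssim L^{(d-1)^+}/a^2$, which is harmless since $a\lesssim L^2\le L^{d-1}$ for $d\ge 3$.
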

\begin{proof}
We begin by dyadically subdividing the interval $[a,CL^2]$, for some large $C$, we define
\begin{gather*}
R_{\mathbb{Z}^{(m)}}\overset{def}{=}\{(p,q)\in\mathbb{Z}^{2d}\cap[0,L]^{2d} \bigm\lvert \abs{Q(p,q)}\in [2^m,2^{m+1}], p\ne q\}, \\
 m_{\min}=\lfloor \log_2 a\rfloor,\quad\mbox{and}\quad m_{\max}=\lceil \log_2 CL^2\rceil\,.
\end{gather*}
Applying Lemma \ref{l:lossy_count-l} yields
\begin{align*}
\sum_{\substack{(p,q)\in \mathbb Z^{2d}\cap[0,L]^{2d}\\ p\neq q, \abs{Q(p,q)}\geq a}}\frac{1}{Q(p,q)^2}&\lesssim
\sum_{m=m_{\min}}^{m_{\max}}2^{-2m}\# R_{\mathbb{Z}^{(m)}}\\
&\lesssim
\sum_{m=m_{\min}}^{m_{\max}}2^{-2m}L^{(2d-2)+}2^{m}\\
&\lesssim
\sum_{m=m_{\min}}^{m_{\max}}\frac{L^{(2d-2)+}}{a}\,.
\end{align*}
\end{proof}

The following lemma will be useful localizing the sum in Theorem \ref{th:eqdist}.
\begin{lemma}\label{e:count_with_g}
Fix $\epsilon>0$, then for $\delta>0$ sufficiently small  the following statement is true: Suppose $I_j,J_j\subset \mathbb [0,L]$ for $j=1,\dots,n$ are intervals with length satisfying
\begin{equation}
L^{1-\delta} \leq \abs{I_j},\abs{J_j}\,
\end{equation}
and define
\[S_{(I,J)}\overset{def}{=}\{(p,q)\in\mathbb{Z}^{2d} \bigm\lvert p_j\in I_j, ~q_j\in J_j,~p\ne q\}\,.\]
Then for $\mu$ satisfying $L^{\epsilon}\leq \mu\leq L^{d-\epsilon}$ we have
\begin{equation}
\sum_{ (p,q)\in S_{(I,J)}}g(\mu Q(p,q))=\int_{I_1\times \dots \times I_d}\int_{J_1\times \dots\times J_d}g(\mu Q(x,y))\,dxdy+O\left(\frac{L^{2(d-1-d\delta)}}{\mu}\right)\,.
\end{equation}
\end{lemma}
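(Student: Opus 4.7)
Treat $g(\mu\cdot)$ as a smooth test function concentrated on scale $1/\mu$ and compare the discrete and continuous averages via Theorem~\ref{t:Bourgain}. First, truncate tails: since $g(x)=O(1/x^{2})$, both $\sum_{|Q(p,q)|>A}|g(\mu Q(p,q))|$ and the analogous integral tail are bounded by $L^{(2d-2)^{+}}/(\mu^{2}A)$ via Lemma~\ref{l:large_K} (with the integral estimated by the rescaling $x=Lx',\,y=Ly'$, as in Step~2 of the reductions preceding Theorem~\ref{th:eqdist}). Choosing $A=L^{-\epsilon'}$ with $0<\epsilon'<\epsilon$ and taking $\delta$ sufficiently small relative to $\epsilon$ forces these tails into the target error $O(L^{2(d-1-d\delta)}/\mu)$.

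On the main piece $|Q|\le A$, partition into intervals $I_{k}=[a_{k},a_{k}+\eta]$ of length $\eta=L^{-d+1+\epsilon_{0}}$, the finest scale admissible by Theorem~\ref{t:Bourgain}. The boxes $I_{j}\times J_{j}$ satisfy the hypotheses of that theorem, so for each $k$,
\[
N(I_{k})-M(I_{k})=O\!\left(L^{2(d-1)-2d\delta}\,\eta\right),
\]
where $N(\cdot)$ is the lattice-point count and $M(\cdot)$ the corresponding continuous measure of the fibre $\{(x,y)\in I\times J : Q(x,y)\in\cdot\}$. Writing $g(\mu a)=g(\mu a_{k})+[g(\mu a)-g(\mu a_{k})]$ and summing the leading contribution across $k$ against the bound above, together with the Riemann-sum approximation $\sum_{k}\eta\,|g(\mu a_{k})|\lesssim \int|g(\mu a)|\,da\lesssim 1/\mu$, gives
\[
\left|\sum_{k}g(\mu a_{k})(N-M)(I_{k})\right|\lesssim L^{2(d-1)-2d\delta}\int|g(\mu a)|\,da\lesssim \frac{L^{2(d-1)-2d\delta}}{\mu},
\]
which matches the target precisely. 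The residual variation $\sum_{k}\int_{I_{k}}[g(\mu a)-g(\mu a_{k})]\,d(N-M)$ is handled by Taylor expansion of $g$ about $a_{k}$ combined with integration by parts on each $I_{k}$: the integrated moments $\int_{I_{k}}(a-a_{k})^{n}\,d(N-M)$ are controlled by reapplying Theorem~\ref{t:Bourgain} to sub-intervals, exploiting the Lipschitz-at-scale-$\eta$ property this affords for $N-M$.

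The main obstacle is the effective step size $\mu\eta$ in the variation term: the Taylor-based treatment is quantitatively efficient only in the regime $\mu\ll 1/\eta=L^{d-1-\epsilon_{0}}$, which nevertheless comfortably includes the range $\mu\le L^{d-2-\epsilon}$ actually required for the application to Theorem~\ref{t:asymptotic formula} (as noted by the authors). For $\mu$ in the upper portion of the stated range, close to $L^{d-\epsilon}$, the Taylor approach degrades and one must instead work with the Fourier representation $g(\mu a)=\int_{-1}^{1}(1-|\xi|)_{+}\,e^{2\pi i\xi\mu a}\,d\xi$, estimating the factored exponential sum $\prod_{j}\sum_{p_{j}\in I_{j}}e^{2\pi it\beta_{j}p_{j}^{2}}$ uniformly for $|t|\le \mu$ by Poisson summation and stationary phase, following the template of Steps~5--10 in the proof of Theorem~\ref{t:Bourgain}, and then integrating against $(1-|\xi|)_{+}$.
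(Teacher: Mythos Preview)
Your outline is largely on the right track: the tail truncation via Lemma~\ref{l:large_K} and the plan to feed the bulk into Theorem~\ref{t:Bourgain} both match the paper. The difference lies in how you pass from the equidistribution statement for indicator functions to one for the smooth weight $g(\mu\,\cdot)$. You discretize the $Q$-axis at the finest Bourgain scale $\eta$, freeze $g$ on each sub-interval, and then must control the residual $\sum_k\int_{I_k}[g(\mu a)-g(\mu a_k)]\,d(N-M)$. As you yourself observe, this residual is only small when $\mu\eta\ll 1$, i.e.\ $\mu\ll L^{d-1-\epsilon_0}$; your proposed ``Lipschitz-at-scale-$\eta$'' control of $(N-M)$ on sub-intervals of $I_k$ gives only $|(N-M)([a_k,a])|\lesssim L^{2(d-1)-2d\delta}\eta$ (by writing it as a difference of two intervals of length $\ge\eta$), not a bound proportional to $|a-a_k|$, so the Taylor scheme cannot be pushed further. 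This forces you into the Fourier fallback for the upper end of the $\mu$-range, which would essentially mean redoing the exponential-sum analysis inside the proof of Theorem~\ref{t:Bourgain} --- a substantial detour.

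The paper sidesteps all of this with a single global Abel summation. Writing $A(y)=\#\{(p,q)\in S_{(I,J)}:|Q(p,q)|\le y\}$ and $\tilde A(y)$ for its continuous analogue, one has (after the tail cut at $Y=\mu^{-1}L^{4d\delta}$)
\[
\sum g(\mu Q(p,q))-\int g(\mu Q(x,y))\,dx\,dy=-\mu\int_0^{Y} g'(\mu y)\,\bigl[A(y)-\tilde A(y)\bigr]\,dy+\text{(boundary)}.
\]
Theorem~\ref{t:Bourgain} is then applied \emph{pointwise in $y$} to give $|A(y)-\tilde A(y)|\lesssim L^{2(d-1)-10d\delta}\,y$ above the equidistribution scale, with the trivial upper bound from Corollary~\ref{l:lossy_count} below it. Using merely $|g'|\lesssim 1$, the error integral is $\lesssim \mu\,L^{2(d-1)-10d\delta}\,Y^2\lesssim L^{2(d-1)-2d\delta}/\mu$. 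The key point you are missing is that one integration by parts converts the variation of $g$ into a single factor of $\mu$ and shifts the entire burden onto the \emph{cumulative} count $A(y)$ --- an interval of length $2y\ge\eta$ throughout the relevant range --- which is exactly the object Theorem~\ref{t:Bourgain} controls. No partition into $\eta$-intervals, no Taylor expansion, and no separate treatment of large $\mu$ are needed.
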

\begin{proof}
First note that by Lemma \ref{l:large_K}
\begin{align*}
\abs{\sum_{(p,q)\in S_{(I,J)}}g(\mu Q(p,q))-\sum_{ \substack{(p,q)\in S_{(I,J)}\\ \abs{Q(p,q)}\leq \mu^{-1}L^{4d\delta}}}g(\mu Q(p,q))}&\lesssim \frac{L^{2(d-1)+(1-4d)\delta}}{\mu}\\
&\lesssim\frac{L^{2(d-1-d\delta)}}{\mu}\,.
\end{align*}

Define the sum $A(y)$ and the integral $\tilde A$ as follows
\[A(y)=\sum_{\substack{(p,q)\in S_{(I,J)}\\\abs{ Q(p,q)} \leq y}} 1\quad\mbox{and}\quad \tilde A(y)=\int_{I_1\times \dots\times I_d}\int_{J_1\times \dots \times J_d} \mathds{1}_{[-y,y]} (Q(u,v))\,dudv\]

Then in the sense of distributions
\begin{align*}
\sum_{ \substack{(p,q)\in S_{(I,J)}\\ \abs{Q(p,q)}\leq \mu^{-1}L^{4d\delta}}} g(\mu Q(p,q))&=\int_{0}^{\mu^{-1}L^{4d\delta}}g(\mu y)A'(y)\,dy\\
&=-\mu \int_{0}^{\mu^{-1}L^{4d\delta}}g'(\mu y)A(y)\,dy+g(L^{4d\delta})A(\mu^{-1}L^{4d\delta})\\
&=-\mu \int_{0}^{\mu^{-1}L^{4d\delta}}g'(\mu y)A(y)\,dy+O\left(\frac{L^{2(d-2)-4d\delta}}{\mu}\right)
\end{align*}
where in the last inequality we applied Lemma \ref{l:lossy_count-l} and the bound $L^{\epsilon}\leq \mu\leq L^{d-\epsilon}$. Writing $A=\tilde A+(A-\tilde A)$, we have
\[
\sum_{ \substack{(p,q)\in S_{(I,J)}\\ \abs{Q(p,q)}\leq \mu^{-1}L^{4d\delta}}} 
\hskip -4mm  g(\mu Q(p,q)) =-\mu\hskip -2mm  \int\limits_{0}^{\mu^{-1}L^{4d\delta}}\hskip -3mm g'(\mu y)\tilde A(y)\,dy+\mu \hskip -2mm \int\limits_{0}^{\mu^{-1}L^{4d\delta}} \hskip -3mm g'(\mu y)(A(y)-\tilde A(y))\,dy+O\left(\frac{L^{2(d-2)-4d\delta}}{\mu}\right)
\]
By Theorem \ref{t:Bourgain} (by choosing $\delta$ smaller than the $\delta$ used in the theorem) it follows that assuming $y\geq L^{-d+\epsilon}$ then
\[\abs{A(y)-\tilde A(y)}\lesssim L^{2(d-1)-10d\delta}y\,.\]
For  $y\leq L^{-d+\epsilon}$ by the trivial bound
\[\abs{A(y)-\tilde A(y)}\lesssim A(y)+\tilde A(y)\lesssim L^{d-2+\epsilon+\delta}\,.\]
Using the trivial bound $g'(z)\lesssim 1$ we have
\begin{align*}
&\mu \int_{0}^{\mu^{-1}L^{4d\delta}}\abs{g'(\mu y)(A(y)-\tilde A(y))}\,dy\\&\qquad
\lesssim\mu \int_{0}^{{L^{-d+\epsilon}}}\abs{g'(\mu y)(A(y)-\tilde A(y))}dy+ \int_{{L^{-d+\epsilon}}}^{\mu^{-1}L^{4d\delta}}\abs{g'(\mu y)(A(y)-\tilde A(y))}dy\\
&\qquad
\lesssim\mu L^{-2+2\epsilon+\delta} + \mu L^{2(d-1)-10d\delta}\int_{{L^{-d+\epsilon}}}^{\mu^{-1}L^{4d\delta}}y dy\\
&\qquad\lesssim \mu^{-1}L^{2(d-1)-\epsilon+\delta} +   \mu^{-1} L^{2(d-2)-2d\delta}
\end{align*}
where in the last inequality we used $\mu\leq L^{d-\epsilon}$. Choosing $\delta$ sufficiently small in relation to $\epsilon$, this constitutes an allowable error.
The proof concludes by noting that by integration by parts
\begin{align*}
-\mu \int\limits_{0}^{\mu^{-1}L^{4d\delta}}g'(\mu y)\tilde A(y)\,dy&=\iint\limits_{\substack{I_1\times \dots \times I_d\\J_1\times \dots \times J_d} }\mathds{1}_{[-\mu^{-1}L^{4d\delta},\mu^{-1}L^{4d\delta}]}(Q(x,y))\left(g(\mu Q(x,y))+g(L^{4d\delta})\right)\,dxdy\\
 \end{align*}
 \end{proof}

\begin{proof}[Proof of Theorem \ref{th:eqdist}]
We first note that by symmetry, it is sufficient to restrict ourselves to the positive sector $p,q\in\mathbb Z^d_+$. Note that Lemma \ref{l:small-range} implies the subset of $(p,q)$ such that $p_j=0$ or $q_j=0$ may be treated as an admissible error. Thus, it suffices to show
\begin{align*}
\sum_{\substack{(p,q)\in  \mathbb Z_+^{2d}\\ p\neq q}}W\left(\frac{p}{L},\frac{q}{L}\right)g(\mu Q(p,q))&=L^{2d}\int\limits_{\mathbb R_+^{2d}}W(x,y)g(L^2\mu Q(x,y))\,dxdy+O\left(\frac{L^{2(d-1)-\delta}}{\mu}\right)\\
&= \frac{L^{2(d-1)}}{\mu}\int\limits_{\mathbb R_+^{2d}}W(x,y)\delta(Q(x,y))\,dxdy +O\left(\frac{L^{2(d-1)-\delta}}{\mu}\right)
\end{align*}

Divide $[0,L^{\delta}]^d\times [0,L^{\delta}]^d$ into products of cubes $M_j,N_k\subset \mathbb R_+^d$ of length $L^{-10d\delta}$. Define $W_{j,k}$ to be the average of $W$ over $M_j\times N_k$:
\[W_{j,k}:=\fint_{M_j}\fint_{N_k}W(x,y)\,dxdy\]
Note that if $(x,y)\in M_j\times N_k$ then from the smoothness of $W$
\[\abs{W(x,y)-W_{j,k}}\lesssim L^{-10d\delta}\,.\]
Hence using Lemma \ref{l:large_K} 
\begin{align*}
\abs{\sum_{\substack{(p,q)\in  \mathbb Z_+^{2d}\\ p\neq q}}W\left(\frac{p}{L},\frac{q}{L}\right)g(\mu Q(p,q))-\sum_{j,k}\sum_{\substack{p\in LM_j, q\in LN_k\\ p\neq q}}W_{j,k}g(\mu Q(p,q))}&\lesssim \frac{L^{2(d-1)(1+\delta)+\delta-10d\delta}}{\mu}\\&\lesssim \frac{L^{2(d-1)-\delta}}{\mu}
\end{align*}
Applying Lemma \ref{e:count_with_g} (taking $\delta$ to be sufficiently small) we obtain
\begin{align*}
\sum_{j,k}\sum_{\substack{p\in LM_j, q\in LN_k\\ p\neq q}}W_{j,k}g(\mu Q(p,q))&=\sum_{j,k}\int_{LM_j}\int_{LN_k}W_{j,k}g(\mu Q(x,y))\,dxdy+O\left( \frac{L^{2(d-1)-\delta}}{\mu}\right)\\
&=L^{2d}\int\limits_{\mathbb R_+^{2d}}W(p,q)g(L^2 \mu Q(x,y))\,dxdy+O\left( \frac{L^{2(d-1)-\delta}}{\mu}\right)\,.
\end{align*}
\end{proof}

\noindent {\bf Acknowledgments.} TB was supported by the National Science Foundation grant DMS-1820764.
PG was supported by the National Science Foundation  grant  DMS-1301380.
JS was  supported by the National Science Foundation  grant DMS-1363013.
ZH was supported by National Science Foundation grant DMS-1852749, NSF CAREER award DMS-1654692, and a Sloan Fellowship.


\end{document}